\newcommand{\tikzcircle}[2][red,fill=red]{\tikz[baseline=-0.5ex]\draw[#1,radius=#2] (0,0) circle ;}%
\definecolor{ptblue}{RGB}{0, 68, 136}
\newcommand{\renewmathenvironment}[1]{%
  \renewenvironment{#1}{%
    \begin{base#1}%
  }{%
    \end{base#1}%
  }%
}
\newcommand{\newmathenvironment}[1]{%
  \newenvironment{#1}{%
    \begin{base#1}%
  }{%
    \end{base#1}%
  }%
}
\DeclareMathOperator{\Vol}{Vol}
\newcommand{\Rnn}{\R_{\raisebox{1pt}{$\scriptscriptstyle \geq 0$}}}
\newcommand{\Rplus}{\R_{\raisebox{1pt}{$\scriptscriptstyle > 0$}}}
\newcommand{\invalid}{\hyperref[def:invalid]{invalid}}
\newcommand{\sufficientprogress}{\hyperref[eq:sufficient-progress]{sufficient progress \eqref{eq:sufficient-progress}}}
\newcommand{\sufficientprogressnl}{sufficient progress}
\newlist{alglisting}{itemize}{10}
\setlist[alglisting]{label={},leftmargin=0pt}
\newlist{listing}{itemize}{10}
\setlist[listing]{label={},leftmargin=1.1em}
\newlist{apxlisting}{itemize}{10}
\setlist[apxlisting]{label={},leftmargin=2.0em}
\newcommand{\mPstar}{\mP_{\!\!*}}
\newcommand{\mPt}{\mP_{\!t}}
\title{
Searching for Optimal Per-Coordinate Step-sizes\\ with Multidimensional Backtracking
}
\author{%
  \iftrue
  Frederik Kunstner  $\quad$  Victor S. Portella  $\quad$  Mark Schmidt${}^\dagger$  $\quad$  Nick Harvey   \\
  $\{$\texttt{kunstner,victorsp,schmidtm,nickhar}\}\texttt{@cs.ubc.ca} \\
  {University of British Columbia}  $\quad$  $\quad$  {Canada CIFAR AI Chair (Amii)${}^{\dagger}$}  \\
  \else
  Frederik Kunstner \\  %
  {University of British Columbia}\\
  \texttt{kunstner@cs.ubc.ca} \\
  \And
  Victor S. Portella \\  %
  {University of British Columbia}\\
  \texttt{victorsp@cs.ubc.ca} \\
  \And
  Mark Schmidt \\  %
  {University of British Columbia}\\
  {Canada CIFAR AI Chair (Amii)}\\
  \texttt{schmidtm@cs.ubc.ca} \\
  \And
  Nick Harvey \\  %
  {University of British Columbia}\\
  \texttt{nickhar@cs.ubc.ca} \\
  \fi
}
\begin{document}

\maketitle

\begin{abstract}
The backtracking line-search is an effective technique 
to automatically tune the step-size in smooth optimization.
It guarantees similar performance to using the theoretically optimal step-size.
Many approaches have been developed to instead tune 
per-coordinate step-sizes, also known as diagonal preconditioners,
but none of the existing methods 
are provably competitive with the optimal per-coordinate step-sizes.
We propose \emph{multidimensional backtracking}, 
an extension of the backtracking line-search 
to find good diagonal preconditioners for smooth convex problems. %
Our key insight is that the gradient with respect to the step-sizes, 
also known as hypergradients, yields separating hyperplanes 
that let us search for good preconditioners using cutting-plane methods.
As black-box cutting-plane approaches like the ellipsoid method are computationally prohibitive, 
we develop an efficient algorithm tailored to our setting. %
Multidimensional backtracking is provably competitive with the best
diagonal preconditioner and requires no manual tuning. 
\end{abstract}

\iffalse
When training machine learning models, tuning the hyperparameters of the
optimizer is often a major pain point. 
For example, finding a reasonable step-size hyperparameter for gradient descent 
typically involves trial-and-error or a costly grid search. 
However, only so much improvement can be achieved by tuning the step-size. 
For ill-conditioned problems, 
per-coordinate step-sizes---or diagonal preconditioning---can 
drastically improve convergence.
This has motivated a wide variety of approaches
specific to machine learning problems
to \emph{adaptively} find per-coordinate step-sizes.
However, the community lacks clear definitions of what 
\emph{adaptive step-sizes are}.

The most well-known definition of adaptivity comes from online learning,
where methods such as AdaGrad adapt to problem specific-constants without user input
while maintaining strong guarantees, even in the adversarial setting. 
However, this resilience to adversaries is a double-edged sword.
To satisfy this definition of adaptivity, AdaGrad monotonically decreases its step-sizes.
As a result, it performs poorly on non-adversarial problems,
and many follow-up methods have focused on working around this decreasing property.
Methods commonly used in deep learning,
such as RMSProp and Adam, are often motivated by analogy to AdaGrad,
but without decreasing step-sizes. 
This change is crucial for their practical performance 
but nullifies their adaptivity guarantees,
indicating that the online-learning definition of adaptivity might not capture what we want it to.

Alternative approaches to tune per-coordinate step-sizes 
during the course of optimization, 
such as adaptive gain and hypergradient methods, 
do not have a formal definition of what they are aiming to achieve,
and are instead motivated from intuition alone.
While showing promising practical performance in some settings, 
hypergradient methods are often unstable and can require as much babysitting 
as the original optimizer they are tuning.
The lack of a well-defined objective makes comparison of those methods purely empirical, 
and the field lacks direction on how to improve on the state-of-the-art.

However, there is an alternative definition of adaptivity in smooth optimization,
where the standard approach to tuning the step-size is to do a backtracking line-search. 
Applied to gradient descent, this line-search guarantees that the step-size
is within a constant factor of the best theoretical step-size.
But this method is often overlooked as it only capture adaptivity of a single step-size.

\textbf{Contribution.}
We propose \emph{multidimensional backtracking}, a method analogous to
a backtracking line-search that automatically finds
per-coordinate step-sizes while running gradient descent.
The main difficulty in extending the line-search to higher dimensions 
is that the signal used to search for a good scalar step-size, 
that the step-size is ``too big'', is insufficient to
efficiently search over per-coordinate step-sizes. 
Our key insight is that the gradient with respect to the step-sizes can be used in
conjunction with a cutting-plane method to make this search feasible in high dimensions,
and we develop a cutting-plane method tailored to the problem 
with minimal overhead. %
We show that our method has a similar guarantee as line-search, in that
its convergence rate is within a \(\smash{O(1/\sqrt{d})}\) factor of
preconditioned gradient descent with the optimal, but unknown, diagonal
preconditioner.
\else

\section{Introduction}
When training machine learning models, tuning the hyperparameters of the
optimizer is often a major challenge. 
For example, finding a reasonable step-size hyperparameter for gradient descent typically
involves trial-and-error or a costly grid search. 
In smooth optimization, a common approach to set the
step-size without user input is a backtracking line-search: 
start with a large step-size, and decrease it when it is too big to make sufficient progress.
For ill-conditioned problems, however, 
there are limits to the improvement achievable by tuning the step-size.
Per-coordinate step-sizes%
---also known as diagonal preconditioners---%
can drastically improve performance.
Many approaches have been developed to automatically tune per-coordinate step-sizes.
Those are often described as ``adaptive'' methods,
but the meaning of this term varies widely,
from describing heuristics that set per-coordinate step-sizes,
to ensuring performance guarantees as if a particular property of the problem were known in advance.
Yet, even on the simplest case of a smooth and strongly convex deterministic problem
where a good fixed diagonal preconditioner exists 
(i.e., one that reduces the condition number),
none of the existing adaptive methods are guaranteed to 
find per-coordinate step-sizes that improve the convergence rate.
We discuss approaches to adaptive methods in the next section.

\textbf{Contribution.}
We propose \emph{multidimensional backtracking}, 
an extension of the standard backtracking line-search to higher dimension, 
to automatically find good per-coordinate step-sizes. 
Our method recovers the convergence rate 
of gradient descent with the \emph{optimal preconditioner} for the problem,
up to a $\smash{\sqrt{2d}}$ factor where $d$ is the number of coordinates.
This is a direct generalization of the line-search guarantee, 
with a penalty depending on dimension due to the extra degrees of freedom, as expected.

\iffalse
The main difficulty in extending the line-search to higher dimensions 
is that when searching for a scalar step-size, 
all we can check is whether the step-size is ``too big''. 
This is insufficient to efficiently search over per-coordinate step-sizes. 
Our key insight is that the gradient 
with respect to the step-sizes 
can be used in conjunction with a cutting-plane method 
to make this search feasible in high dimensions.
\fi

\fi 

\subsection{Adaptive step-sizes and preconditioning methods}

\newcommand{\putfigurehere}{%
  \begin{wrapfigure}{r}{0.38\textwidth}
  \vspace{-1.5em}
  \includegraphics[width=0.31\textwidth]{script_plots/adagrad_has_issues.pdf}
  \vspace{-1em}
  \caption{
  {\bf Adaptive methods from online learning are overly pessimistic and can be slow on simple problems.}
  Experimental details in \cref{apx:experimental-details}.
  }%
  \label{fig:adagrad-has-issues}%
  \vspace{-1.25em}
\end{wrapfigure}%
}

\textbf{Adaptive and parameter-free methods in online learning}
are an example where \emph{adaptive methods} have a well-defined
meaning.
AdaGrad \citep{mcmahan2010adaptive,duchi2011adagrad} 
and Coin Betting \citep{orabona2016coinbetting,orabona2017cocob} 
can adapt to problem-specific constants without user input
and have strong guarantees, even in the \emph{adversarial} setting. 
However, this resilience to adversaries is a double-edged sword;
to satisfy this definition of adaptivity, 
AdaGrad uses monotonically decreasing step-sizes.
While AdaGrad still converges at the desired asymptotic rate
on smooth, Lipschitz functions \citep[][]{ward2019adagradstepsizes,li2019convergence},
its performance can be worse than plain gradient descent.
This motivated investigations of workarounds 
to avoid the monotonically decreasing updates, including 
augmenting the update with an increasing step-size schedule~\citep{agarwal2020disentangling},
a line-search~\citep{vaswani2020adaptive},
or modifying the update to the preconditioner~\citep{defazio2022gradagrad}.
Methods commonly used in deep learning,
such as RMSProp and Adam~\citep{hinton2012rmsprop,kingma2015adam},
are often motivated as \emph{adaptive} by analogy to AdaGrad, 
but without decreasing step-sizes \citep[e.g., ][\S4.3]{defossez2022asimple}. 
This change is crucial for their practical performance,
but nullifies their online-learning adaptivity guarantees.

\textbf{Adaptive gain and hypergradient heuristics.}
Many heuristics that tune
the hyperparameters of the optimization procedure 
use the gradient with respect to the hyperparameters, 
or \emph{hypergradients} \citep{maclaurin2015gradient}.
Methods have been proposed
to tune the step-size \citep{masse2015speed}, 
a preconditioner \citep{moskovitz2019firstorder}, 
any hyperparameter \citep{baydin2018online}, 
or to maintain a model of the objective \citep{bae2022amortized}.
``Stacking'' such optimizers recursively has been shown 
to reduce the dependency on user-specified hyperparameters in practice~\citep{chandra2022gradient}. 
This idea pre-dates the hypergradient nomenclature;
\citet{kesten1958accelerated} presents 
a method to update the step-size based on the sign of successive gradients, 
and \citet{saridis1970learning} presents a control perspective for per-coordinate step-sizes, 
which can be cast as a hypergradient update to a diagonal preconditioner.\footnote{%
The hypergradient with respect to a diagonal preconditioner
$\mP=\Diag(\vp)$ is, by the chain rule, the element-wise product ($\odot$) of subsequent
gradients, $- \nabla_{\vp} f(\vx - \Diag(\vp) \nabla f(\vx)) = \nabla
f(\vx) \odot \nabla f(\vx - \Diag(\vp) \nabla f(\vx))$. } 
This approach has led to \emph{adaptive gain} methods
such as Delta-Delta and variants~\citep{%
barto1981goal,jacobs1988increased,silva1990acceleration,sutton1992adapting,sutton1992gain},
and further developed using
the sign of the hypergradient~\citep{riedmiller1993rprop},
full-matrix updates~\citep{almeida1999adaptation},
a larger history~\citep{plagianakos2001learning},
updates in log-space~\citep{schraudolph1999local,schraudolph2005fast},
heuristics to adjust the outer step-size~\citep{mahmood2012tuningfree},
or multiplicative weight updates~\citep{amid2022stepsize}.
While showing promising practical performance in some settings, 
existing methods are often motivated from intuition rather than a formal definition of adaptivity,
giving no guarantee that the tuned method will converge faster, if at all.
Indeed, hypergradient methods are often unstable,
and may require as much manual tuning as the original optimizer they are intended to tune.

\textbf{Second-order methods.}
A classical approach to preconditioning is to use second-order information, 
as in Newton's method or its regularized variants \citep[e.g.,][]{nesterov2006cubic}.
To avoid the load of computing and inverting the Hessian, 
quasi-Newton methods \citep{dennis1977quasi} such as L-BFGS \citep{liu1989lbfgs}
fit an approximate Hessian using the secant equation.
Variants using diagonal approximations have also been proposed, 
framed as Quasi-Cauchy, diagonal BFGS, or diagonal Barzilai-Borwein methods%
~\citep{zhu1999quasicauchy,andrei2019diagonalbfgs,park2020variable},
while other methods use the diagonal of the Hessian \citep{lecun2012efficient,yao2021adahessian}.
Some second-order and quasi-Newton methods converge super linearly
(although not the diagonal or limited memory variants used in practice),
but those guarantees only hold locally when close to the minimum. 
To work when far from a solution, those methods require
``globalization'' modifications, such as regularization or a line-search. 
Unfortunately, analyses of second-order methods 
do not capture the global benefit of preconditioning and instead
lead to worse rates than gradient descent, as in the results of 
\citet[Cor. 3.3]{byrd2016stochastic}, %
\citet[Thm. 3.1]{bollapragada2018progressive},
\citet[Thm. 1]{meng2020fast},
\citet[Apx.]{yao2021adahessian},
\citet[Thm. 5.2]{berahas2022quasi},
or 
\citet[Thm. 4.9]{jahani2022doubly}.

\textbf{Line-searches.}
Adaptivity in smooth optimization is most closely related to line-searches.
The standard guarantee for gradient descent on a \(L\)-smooth function 
requires a step-size of \(1/L\), but $L$ is typically unknown. 
The backtracking line-search based on the Armijo condition \citep{armijo1966minimization}
approximately recovers this convergence guarantee by starting with a large step-size, and backtracking; 
halving the step-size whenever it does not yield sufficient improvement.
However, line-searches are often overlooked in the discussion of adaptive methods, 
as they do not provide a way to set more than a scalar step-size.
While line-searches can be shown to work in the stochastic overparameterized setting 
and have been applied to train neural networks \citep{vaswani2019painless}, 
improvements beyond backtracking have been limited. 
Additional conditions \citep{wolfe1969convergence}, 
non-monotone relaxations \citep{grippo1986nonmonotone}, 
or solving the line-search to higher precision \citep{more1994line} 
can improve the performance in practice,
but even an exact line-search cannot improve the convergence rate beyond
what is achievable with a fixed step-size \citep{deKlerk2017}. 

\subsection{Summary of main result: adaptivity to the optimal preconditioner}
Our approach is inspired by the work discussed above,
but addresses the following key limitation: 
none of the existing methods attain better global convergence rates than a backtracking line-search. 
Moreover, this holds even on smooth convex problems for which a good preconditioner exists.

We generalize the backtracking line-search
to handle per-coordinate step-sizes 
and find a good preconditioner.
As in quasi-Newton methods, we build a preconditioner based on first-order information.
However, instead of trying to approximate the Hessian using past gradients, 
our method searches for a preconditioner that minimizes the objective function at the next step.
Our convergence result depends on the best rate achievable by an \emph{optimal diagonal preconditioner}, 
similarly to how methods in online learning are competitive against the best preconditioner in hindsight.
However, our notion of optimality is tailored to smooth strongly-convex problems 
and does not require decreasing step-sizes as in AdaGrad.
Our update to the preconditioner can be interpreted as a hypergradient method, 
but instead of a heuristic update, 
we develop a cutting-plane method that uses hypergradients to guarantee a good diagonal preconditioner.
Our main theoretical contribution is summarized below.
\begin{theorem}[Informal]
\label{thm:thm1}
On a smooth, strongly-convex function $f$ in $d$ dimensions,
steps accepted by multidimensional backtracking guarantee the following progress
\aligns{
  f(\vx_{t+1}) - f(\vx_*) \leq \paren{1-\frac{1}{\sqrt{2d}}\frac{1}{\kappa_*}} \big(f(\vx_t) - f(\vx_*)\big),
}
where $\kappa_*$ is the condition number achieved by
the optimal preconditioner defined in \Cref{sec:optimal}.
The number of backtracking steps is at most linear in $d$ and logarithmic in problem-specific constants.
\end{theorem}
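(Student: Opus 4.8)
The plan is to follow the template of the one-dimensional backtracking proof, with the test ``the step-size is too large'' replaced by a separating-hyperplane oracle obtained from the hypergradient. Fix an iterate $\vx$ and write $\vg = \nabla f(\vx)$. Call a preconditioner $\mP = \Diag(\vp)$, $\vp \in \Rnn^d$, \emph{valid} at $\vx$ if the tentative step $\vx^+ = \vx - \mP\vg$ satisfies an Armijo-type sufficient-progress condition such as $f(\vx^+) \le f(\vx) - \tfrac12\langle \vg, \mP\vg\rangle$. I would first record two structural lemmas. (i) By $L$-smoothness every $\vp$ with all coordinates at most $1/L$ is valid, and in particular $\mP_*$ --- the optimal diagonal preconditioner, rescaled to the largest multiple still meeting the smoothness bound (rescaling does not change $\kappa_*$) --- is valid at \emph{every} iterate. (ii) The map $\vp \mapsto f(\vx - \Diag(\vp)\vg)$ is convex, so the hypergradient $\vg \odot \nabla f(\vx^+)$ evaluated at an \emph{invalid} $\vp$ certifies, after a short rearrangement, a halfspace that contains the whole valid set (hence $\mP_*$) but not $\vp$. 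This halfspace is the cutting-plane oracle.

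Second, the per-accepted-step contraction. Multidimensional backtracking maintains a box of candidate preconditioners with the invariant that it contains $\mP_*$, and at each trial queries a point $\vq$ that is a $\tfrac1{\sqrt{2d}}$-fraction of the box's extent. When $\vq$ is valid the step is taken, and since the box contains $\mP_*$ the extent dominates $\vp_*$ coordinatewise, so $\vq \ge \tfrac1{\sqrt{2d}}\vp_*$ and hence $\langle\vg,\Diag(\vq)\vg\rangle \ge \tfrac1{\sqrt{2d}}\langle\vg,\mP_*\vg\rangle$. Substituting into the sufficient-progress inequality and then invoking the definition of $\kappa_*$ --- which, combined with the gradient-domination inequality implied by strong convexity, gives $\tfrac12\langle\vg,\mP_*\vg\rangle \ge \tfrac1{\kappa_*}\big(f(\vx) - f(\vx_*)\big)$ --- yields exactly $f(\vx^+) - f(\vx_*) \le \big(1 - \tfrac1{\sqrt{2d}}\tfrac1{\kappa_*}\big)\big(f(\vx) - f(\vx_*)\big)$.

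Third, the number of backtracking steps. I would track the volume of the candidate box as a potential: it starts polynomially bounded in the problem constants (an initial box large enough to contain every valid preconditioner; since step-sizes above $O(1/\mu)$ overshoot under $\mu$-strong convexity, $[0,O(1/\mu)]^d$ suffices), it never drops below $L^{-d}$ (the box always retains the universally safe region $[0,1/L]^d$, which lies on the valid side of every hypergradient cut), and each rejected query shrinks it by a constant factor. The last point is where $\sqrt{2d}$ is pinned down: placing the query point a $\Theta(1/\sqrt d)$-fraction into the box is precisely what forces the rejecting halfspace --- whatever orientation the hypergradient hands us --- to chop a constant fraction of the volume while keeping the valid set (hence $\mP_*$) inside the retained sub-box; a query point nearer the boundary would tighten the competitive ratio but would let an adversarially oriented cut leave essentially the whole box. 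Combining the three facts bounds the rejections before an acceptance by $O(d\log(L/\mu)) = O(d\log\kappa)$, linear in $d$ and logarithmic in the problem constants.

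The main obstacle is the geometry in the third step: one must design the query point and the box-update rule so that two conflicting demands hold simultaneously --- on acceptance, the queried preconditioner dominates a $1/\sqrt{2d}$-fraction of every point of the box (in particular of the unknown $\mP_*$); on rejection, the hypergradient hyperplane, whose normal we neither know nor control, still removes a constant fraction of the box's volume while retaining a sub-box that provably contains $\mP_*$. Auxiliary care is needed to phrase the sufficient-progress test so the separating set is an honest halfspace rather than the quadratic region a naive use of convexity produces, and to argue that the retained sub-box still contains $\mP_*$ even though the update cannot see it --- the delicate point, which I expect to rely on downward-closedness of validity together with the fact that the universally safe box survives every cut.
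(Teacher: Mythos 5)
Your high-level skeleton---view backtracking as maintaining a set $\setS_t$ containing $\mPstar$, certify invalidity via hypergradient cuts on the convex map $\vp\mapsto f(\vx - \Diag(\vp)\nabla f(\vx))$, strengthen the halfspace by truncating its normal using downward-closedness of validity, and combine a $\gamma$-competitive query with strong convexity in the $\mPstar^{-1}$-norm to get the contraction---is the paper's (Propositions~\ref{prop:convergence}, \ref{lem:starting-hyperplane-main}, \ref{lem:stronger-hp}).

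The genuine gap is in your third step. You keep the set as a \emph{box} $\setB(\vb)$ and backtrack by $\gamma = \nicefrac{1}{\sqrt{2d}}$, claiming that a rejection then chops a constant fraction of the volume. It does not. The minimum-volume box enclosing $\setB(\vb)\cap\setH_{\leq}(\vu)$ is $\vb^+[i] = \min\{\vb[i], \nicefrac{1}{\vu[i]}\}$, which shrinks coordinate $i$ only if $\vu[i]\vb[i] > 1$. Rejecting the query $\gamma\vb$ only tells you $\lin{\vu,\vb} > \nicefrac{1}{\gamma} = \sqrt{2d}$; if the cut ``spreads'' its mass across coordinates, say $\vu[i]\vb[i] = \sqrt{2/d}$ for every $i$, then $\lin{\vu,\vb} = \sqrt{2d}$ yet $\vu[i]\vb[i] < 1$ for all $i$ once $d > 2$, so the enclosing box is unchanged and nothing shrinks. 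This is exactly the failure mode illustrated in \Cref{fig:box-bad}. To force any shrinkage with boxes you need $\gamma < \nicefrac{1}{d}$ (pigeonhole: $\max_i\vu[i]\vb[i] \ge \lin{\vu,\vb}/d$), and even then the per-cut volume factor is $\nicefrac{1}{(d+1)}$, not an absolute constant; that is precisely the content of \Cref{thm:box}, which gets only a $\nicefrac{1}{2d}$ competitive ratio.

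The $\nicefrac{1}{\sqrt{2d}}$ claimed by \Cref{thm:thm1} is obtained by switching from boxes to centered axis-aligned ellipsoids $\setE(\va) = \{\vp : \lin{\vp,\Diag(\va)\vp}\le 1\}$. Two things change. First, the candidate is not a rescaled ``max corner'' (an ellipsoid has none) but $\gamma$ times the maximizer of $\norm{\nabla f(\vx)}_\mP^2$ over the ellipsoid, so competitiveness is immediate from the argmax rather than from coordinatewise domination. Second, and crucially, the Mahalanobis geometry rescues the volume argument: if the query $\vp$ has $\norm{\vp}_{\mA} = \gamma$ and is excluded by the halfspace, Cauchy--Schwartz forces $\norm{\vu}_{\mA^{-1}} > \nicefrac{1}{\gamma}$, and with $\gamma = \nicefrac{1}{\sqrt{2d}}$ this norm bound is large enough that a rank-one ellipsoid update shrinks the volume by the absolute constant $\nicefrac{\sqrt[4]{e}}{\sqrt 2}\approx 0.91$ (\Cref{lem:lemlem}, \Cref{thm:main_thm_ellipsoid}). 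The $\sqrt{d}$ is pinned down by the $\ell_2$-vs-$\ell_1$ tradeoff intrinsic to ellipsoid containment, not by the box geometry you invoke; the box cannot deliver both $\nicefrac{1}{\sqrt{2d}}$-competitiveness and guaranteed shrinkage simultaneously.
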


\begin{figure}
\centering
\includegraphics[width=\textwidth]{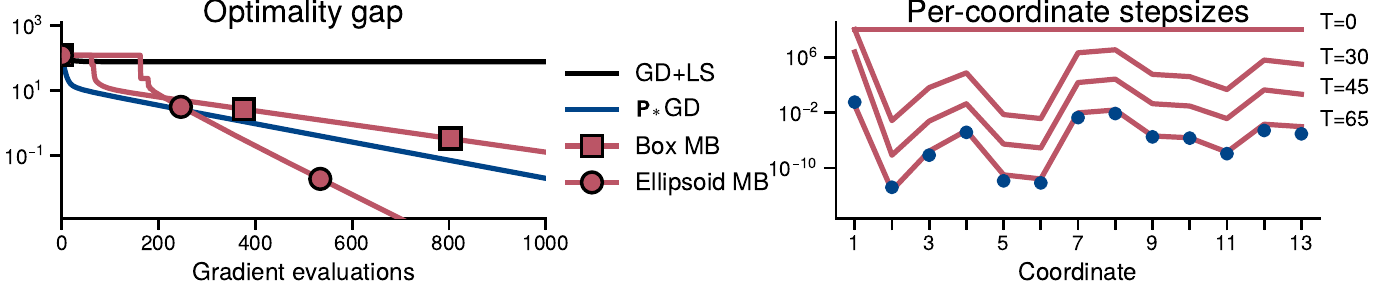}%
\caption{\textbf{Multidimensional backtracking can find the optimal diagonal preconditioner.}
Example on a linear regression where the optimal preconditioner can be computed.
{\bf Left:} Performance
of Gradient Descent (GD), 
optimally preconditioned GD ($\mPstar$GD) with a line-search (+LS),
and Multidimensional Backtracking (MB)
with the strategies in \cref{sec:cutting-plane}.
The ellipsoid variant can outperform the \emph{globally} optimal preconditioner 
by selecting preconditioners that leads to more \emph{local} progress.
{\bf Right:}
Optimal per-coordinate step-sizes (\tikzcircle[ptblue!70 , fill=ptblue, draw = none]{1.5pt}) and the ones found by MB (box) across iterations.
}
\vspace{-.5em}
\label{fig:it-works}
\end{figure}

Multidimensional backtracking finds per-coordinate step-sizes
that lead to a provable improvement over gradient descent on badly
conditioned problems that can be improved by diagonal preconditioning,
i.e., if the condition number of $f$ is at least
$\smash{\sqrt{2d} \cdot \kappa_*}$.  
Moreover, this guarantee is worst-case,
and multidimensional backtracking can outperform the globally optimal
preconditioner by finding a better \emph{local} preconditioner, as
illustrated on an ill-conditioned linear regression problem in
\Cref{fig:it-works}.

To find a competitive diagonal preconditioner, 
we view backtracking line-search as a cutting-plane method
and generalize it to higher dimensions
in \Cref{sec:convergence}.
In \Cref{sec:direction} we show how to use hypergradients
to find separating hyperplanes in the space of preconditioners, 
and in \Cref{sec:cutting-plane} we develop an efficient cutting-plane methods tailored to the problem.
In \Cref{sec:experiments}, 
we illustrate the method through preliminary experiments 
and show it has consistent performance across problems.

{\bf Notation.}
We use standard font weight $d$, $n$, $\alpha$ for scalars,
bold $\vx$, $\vy$ for vectors, and capital bold $\mP$, $\mA$ for matrices.
We use \(\vp[i]\) for the \(i\)-th entry of \(\vp\),
$\odot$ for element-wise multiplication, 
and $\vp^2$ for $\vp \odot \vp$.
$\odot$ for element-wise multiplication, 
and $\vp^2$ for $\vp \odot \vp$.
We use $\mP = \Diag(\vp)$ to denote 
the diagonal matrix with diagonal \(\vp\), 
and $\vp = \diag(\mP)$ to denote the vector of diagonal entries of $\mP$. 
We say \(\mA\) is larger than \(\mB\), 
\(\mA \succeq \mB\), if \(\mA - \mB\) is positive semidefinite. 
If $\mA = \Diag(\va)$, $\mB = \Diag(\vb)$,
the ordering \(\mA \succeq \mB\) is equivalent to \(\va[i] \geq \vb[i]\) for all $i$, 
which we write $\va \geq \vb$.
We use $\mI$ for the identity matrix 
and $\vone$ for the all-ones vector.

\section{Optimal preconditioning and sufficient progress}
\label{sec:optimal}
\label{sec:multidimensional-backtracking}
\label{sec:alg-design}

Consider a twice-differentiable
function $f \colon \R^d\to\R$ 
that is $L$-smooth and $\mu$-strongly convex,\footnote{%
While we use strong-convexity and twice-differentiability of $f$ to define the optimal preconditioner, 
those assumptions can be relaxed to only rely on the PL inequality
\citep{Polyak63a, Lojasiewicz63a} (see \cref{apx:without-sc}).} 
i.e.,
\aligns{%
  \textstyle
  \mu \frac{1}{2}\norm{\vx - \vy}^2 \leq f(\vy) - f(\vx) - \lin{\nabla f(\vx), \vy - \vx }
  \leq L \frac{1}{2}\norm{\vy - \vx}^2,
  \quad \text{ for all } \vx, \vy,
}%
or $\mu \mI \preceq \nabla^2 f(\vx) \preceq L \mI$ for all $\vx$.
We measure the quality of a preconditioner $\mP$ by how tightly it approximates \((\nabla^2 f(\vx))^{-1}\). %
We define an \emph{optimal diagonal preconditioner} for $f$ as
\alignn{%
  \textstyle
  \mPstar \in \argmin_{\mP\succeq0, \text{diagonal}}\, \kappa 
  \qquad \text{ such that } \qquad
  \frac{1}{\kappa}\mP^{-1}  \preceq \nabla^2 f(\vx)\preceq \mP^{-1}
  \text{ for all } \vx,
  \label{eq:optimal-preconditioner-def}
} 
and denote by $\kappa_*$ the optimal \(\kappa\) above.
\cref{eq:optimal-preconditioner-def}
is equivalent to minimizing 
$\smash{\kappa(\mP^{1/2}\nabla^2f(\vx)\mP^{1/2})}$,
a known measure of the convergence rate 
of preconditioned methods \citep[][\S1.3.2]{bertsekas1997nonlinear},
and reduces to the definition of optimal preconditioning for linear systems 
\citep{jambulapati2020fast,qu2022optimal} when $f$ is quadratic. 
Alternatively, the optimal preconditioner can be viewed as the matrix $\mPstar$ 
such that $f$ is $1$-smooth and maximally
strongly-convex in the norm $\norm*{\vx}^2{}_{\!\!\!\mPstar^{-1}} =
\lin{\vx,\mPstar^{-1}\vx}$, %
\alignn{%
  \textstyle
  \smash{
  \frac{1}{\kappa_*} \frac{1}{2}\norm{\vx - \vy}^2_{\mPstar^{-1}} \leq
  f(\vy) - f(\vx) - \lin{\nabla f(\vx), \vy - \vx } \leq
  \frac{1}{2}\norm{\vy - \vx}^2_{\mPstar^{-1}}, \quad \text{ for all }
  \vx, \vy. 
  }
  \label{eq:relative-smooth-strong-convexity}
} 
Similar definitions of smoothness and strong-convexity relative to a matrix 
are common in coordinate descent methods 
\citep[e.g.,][]{qu2016sdna,safaryan2021smoothnessmatrices},
where the matrices are assumed to be known a priori.
If we knew $\mPstar$, preconditioned gradient descent 
using $\mPstar$
would converge at the rate 
\aligns{%
  \textstyle
  f\paren{x - \mPstar \nabla f(\vx)} - f(\vx_*) 
  \leq \paren{1 - \frac{1}{\kappa_*}} (f(\vx) - f(\vx_*)),
}
where \(\vx_*\) minimizes $f$.
We do not know $\mPstar$ and will be searching for a good approximation.
For the standard backtracking line-search on $L$-smooth functions,
the goal is to find a step-size that works as well as $\nicefrac{1}{L}$
without knowledge of $L$.
To do so, we can start with a large step-size $\alpha \gg \nicefrac{1}{L}$ 
and check the \emph{Armijo condition}: 
the step-size $\alpha$ makes progress as if $f$ were $1/\alpha$-smooth, that is,
\alignn{
  \textstyle
  f(\vx - \alpha \nabla f(\vx))
  \leq 
  f(\vx) - \alpha \frac{1}{2}\norm*{\nabla f(\vx)}^2.
  \label{eq:armijo}
}
If the condition is satisfied, we take the step $\vx - \alpha \nabla f(\vx)$.
By the descent lemma, \citep[][A.24]{bertsekas1997nonlinear},
the condition is satisfied if $\alpha \leq \nicefrac{1}{L}$.
So if the condition fails, we know $\alpha$ is too large and can decrease $\alpha$.
For diagonal preconditioners, 
the Armijo condition checks whether the preconditioner 
makes sufficient progress in the norm induced by \(\mP^{-1}\),
as if \(f\) were 1-smooth in \cref{eq:relative-smooth-strong-convexity}, that is,%
\alignn{
  \textstyle
  f(\vx - \mP \nabla f(\vx)) \leq f(\vx) - \frac{1}{2}\norm*{\nabla f(\vx)}^2_{\mP}.
  \label{eq:sufficient-progress}
}%
As with a scalar step-size, sufficient progress holds for any matrix $\mP$
that satisfies $\smash{\nabla^2 f(\vx) \preceq \mP^{-1}}$.

\section{Multidimensional Backtracking}
\label{sec:convergence}

The typical presentation of the backtracking line-search 
maintains a step-size and decreases it when the Armijo condition fails
\citep[e.g.,][Alg 3.1]{nocedal1999numerical}.
We instead take the following non-standard view, 
which generalizes more naturally to high dimension;
as maintaining a set \emph{containing} the optimal step-size,
and using bisection to narrow down the size of the set.
Starting with an interval $\setS = [0, \alpha_{\max}]$ containing $\nicefrac{1}{L}$, 
we pick a candidate step-size $\alpha$ 
by ``backtracking'' by $\gamma < 1$ from the largest step-size in $\setS$, 
taking $\alpha = \gamma \alpha_{\max}$ to balance two properties;
\begingroup\setlength{\parskip}{3pt}
\begin{enumerate}[leftmargin=1.25em,itemsep=3pt,parsep=0pt,topsep=0pt,partopsep=0pt]
  \item \textbf{Large progress:}  
  If the candidate step-size satisfies the Armijo condition and the step is accepted, 
  the value of $f$ decreases proportionally to $\alpha$ as in
  \eqref{eq:armijo}.
  To maximize the progress, $\gamma$ should be large.
  \item \textbf{Volume shrinkage:} 
  If the candidate step-size fails the Armijo condition, 
  we learn that $\alpha > \nicefrac{1}{L}$
  and can cut the interval to $\setS' = [0, \gamma\alpha_{\max}]$.
  To ensure the interval shrinks fast, $\gamma$ should be small.
\end{enumerate}%
\vspace{-.25em}
\endgroup%
Taking $\gamma = \nicefrac{1}{2}$ balances both properties; 
$\alpha$ is at least $\nicefrac{1}{2}$ as large as any step-size in $\setS$, 
and we can halve the interval if the Armijo condition fails.
We do not use $\alpha_{\max}$ as a candidate since, although the largest in $\setS$, 
it would give no information to update the interval in case it failed
the Armijo condition.

For multidimensional backtracking, 
we can check whether a candidate preconditioner yields sufficient progress with \cref{eq:sufficient-progress}
instead of the Armijo condition,
and replace the intervals by sets of diagonal preconditioners.
The high-level pseudocode is given in \cref{fig:pseudocode},
where each iteration either leads to an improvement in function value 
or shrinks the sets of potential step-sizes/preconditioners.

To complete the algorithm, 
we need to define the steps marked as $(\dagger)$
to select preconditioners that lead to large progress when the step is accepted, 
while significantly reducing the search space when the preconditioner does not yield \sufficientprogressnl{}.
For computational efficiency, we want methods that take $O(d)$ time and memory like plain gradient descent.

\begin{figure}
\newcommand{\makelessvisible}[1]{{\color{gray}#1}}
\begin{minipage}{.49\textwidth}
\small
\textbf{Backtracking line-search}\\[-.5em]
\noindent\makebox[\textwidth]{\rule{\textwidth}{0.4pt}}
\begin{alglisting}
  \item {\bf Input:} 
  starting point $\vx_0$,
  backtracking coefficient $\gamma$,
  set $\setS_0 = [0, \alpha^{\max}_0]$ containing the optimal step-size $\nicefrac{1}{L}$. %
  \vspace{.5em}
  \item Iterate for $t$ in $0, 1, ..., T$
  \begin{listing}
    \item Pick step-size $\alpha_t = \gamma \alpha^{\max}_t$ %
    \item  If $(\vx_t, \alpha_t)$ satisfy the Armijo condition \eqref{eq:armijo}
    \begin{listing}
      \item Accept $\vx_{t+1} = \vx_t - \alpha_t \nabla f(\vx_t)$
      \item Keep max step-size $\alpha_{t+1}^{\max}=\alpha_t^{\max}$
    \end{listing}
    \item Otherwise, 
    \begin{listing}
      \item Don't move, $\vx_{t+1}=\vx_t$
      \item Cut max step-size $\alpha_{t+1}^{\max} = \gamma \alpha_t^{\max} $
    \end{listing}
  \end{listing}
  \item {\bf Output: } $\vx_T$
\end{alglisting}
\end{minipage}%
\hfill%
\begin{minipage}{.49\textwidth}
\small
\textbf{Multidimensional Backtracking}\\[-.5em]
\noindent\makebox[\textwidth]{\rule{\textwidth}{0.4pt}}
\begin{alglisting}
  \item {\bf Input:} 
  starting point $\vx_0$,
  backtracking coefficient $\gamma$,
  set $\setS_0$ of preconditioners containing the optimal $\mPstar$.
  \vspace{.5em}
  \item Iterate for $t$ in $0, 1, ..., T$
  \begin{listing}
    \item %
    Pick step-sizes $\mPt = \CANDIDATE(\setS_t, \gamma , \vx_t)$ \hfill $(\dagger)$
    \item  If $(\vx_t, \mPt)$ satisfy the Armijo condition \eqref{eq:sufficient-progress}
    \begin{listing}
      \item Accept $\vx_{t+1} = \vx_t - \mPt \nabla f(\vx_t)$
      \item Keep set $\setS_{t+1} = \setS_t$
    \end{listing}
    \item Otherwise, 
    \begin{listing}
      \item Don't move, $\vx_{t+1} = \vx_t$
      \item Cut set {$\setS_{t+1} = \CUT(\setS_{t}, \vx_t, \mPt)$} \hfill $(\dagger)$
    \end{listing}
  \end{listing}
  \item {\bf Output: } $\vx_T$
\end{alglisting}
\end{minipage}%

\caption{\textbf{Pseudocode for the backtracking line-search and multidimensional backtracking.}
We view backtracking as maintaining a set of step-sizes,
testing one at each iteration
that either make progress on $f$ or reduce the size of the set.
Steps marked by $(\dagger)$,
are the subject of Sections \ref{sec:convergence}--\ref{sec:cutting-plane}.
}
\vspace{-.5em}
\label{fig:pseudocode}
\end{figure}

\subsection{Guaranteed progress competitive with the optimal preconditioner}
We start by formalizing the progress guarantee. If \(\mPt\) satisfies the Armijo condition~\eqref{eq:sufficient-progress} at \(\vx_t\), the function value decreases by at
least $\norm*{\nabla f(\vx_t)}_{\mPt}^2$. If we can guarantee that
${\norm*{\nabla f(\vx_t)}_{\mPt}^2 \geq \gamma \norm*{\nabla
f(\vx_t)}_{\mPstar}^2}$ for some $\gamma > 0$, we can recover the
convergence rate of gradient descent preconditioned with \(\mPstar\) up to a
factor of $\gamma$. However, we do not know $\mPstar$, but know a
set $\setS_t$ that contains preconditioners we have not yet ruled out,
including $\mPstar$. 
To guarantee that $\mPt$ is competitive with $\mPstar$, 
we can enforce that $\mPt$ is competitive 
with \emph{all} the preconditioners in $\setS_t$, as captured by the following definition.

\begin{definition}[$\gamma$-competitive candidate preconditioners]
  \label{def:competitive}
  A matrix $\mPt \in \setS_t$ is $\gamma$-competitive in \(\setS_t\),
  for a gradient \(\nabla f(\vx_t)\), 
  if $\norm*{\nabla f(\vx_t)}_{\mPt}^2 \geq \gamma
  \norm*{\nabla f(\vx_t)}_{\mQ}^2$ for any $\mQ\in \setS_t$.
\end{definition}

If \(\mPt\) is \(\gamma\)-competitive, 
then it is competitive with $\mPstar$ as
$\smash{\max_{\mQ\in\setS_t} \norm*{\nabla f(\vx_t)}_{\mQ}^2 \geq \norm*{\nabla f(\vx_t)}_{\mPstar}^2}$.
However, this is a strong requirement. %
To illustrate what competitive ratios are attainable, 
we show in \cref{apx:competitive} that even the optimal
preconditioner $\mPstar$ might only be $\nicefrac{1}{d}$-competitive, %
as other preconditioners can lead to more \emph{local} progress depending on $\nabla f(\vx_t)$, 
whereas $\mPstar$ is a fixed \emph{global} optimal preconditioner. 
This also suggests that selecting a preconditioner that guarantees more local progress
may lead to better performance, 
which we take advantage of to ensure a \(\gamma = \nicefrac{1}{\sqrt{2d}}\) competitive ratio.

To see how to ensure a competitive ratio, 
consider the case where $\setS$ contains diagonal preconditioners 
whose diagonals come from the 
box \(\setB(\vb) \coloneqq \setst{\vp\in \smash{\Rnn^{d}}}{ \vp \leq \vb}\).
To select a candidate preconditioner that is $\gamma$-competitive in \(\setS\),
we can backtrack from the largest %
vector in $\setB(\vb)$ by some constant $\gamma < 1$, and take \(\mP =
\gamma \Diag(\vb)\). 
While a large $\gamma$ leads to more progress when the step is accepted, 
we will see that we need a small $\gamma$ 
to ensure the volume shrinks when the step is rejected.

We can obtain the convergence rate of \Cref{thm:thm1}
depending on $\gamma$ and the optimal preconditioned condition number $\kappa_*$
if we ensure \(\mPstar \in \setS_t\) and that \(\mPt\) is \(\gamma\)-competitive for all \(t\).
\begin{proposition}
\label{prop:convergence}
Let \(\mPstar, \kappa_*\) be an optimal preconditioner and condition number for \(f\) \eqref{eq:optimal-preconditioner-def}.
If the set \(\setS_t\) from the algorithm in \Cref{fig:pseudocode} contains
\(\mPstar\), and \(\mPt \in \setS_t\) is $\gamma$-competitive
(\Cref{def:competitive}), then
\aligns{%
  f(\vx_{t+1}) - f(\vx_*)
  \leq
  \paren{1 - \frac{\gamma}{\kappa_*}}
  \paren{f(\vx_t) - f(\vx_*)}
}
whenever the candidate step leads to sufficient progress
and is accepted.
\end{proposition}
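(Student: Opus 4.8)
The plan is to chain together three facts: the sufficient-progress guarantee that holds precisely because the step was accepted, the $\gamma$-competitiveness of $\mPt$ against the unknown optimal preconditioner $\mPstar$, and a gradient-domination (PL-type) inequality that follows from strong convexity of $f$ relative to the norm $\norm*{\cdot}_{\mPstar^{-1}}$.

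First, since the candidate step is accepted, the sufficient-progress condition \eqref{eq:sufficient-progress} holds at $\vx_t$ with preconditioner $\mPt$, so
\[
  f(\vx_{t+1}) = f(\vx_t - \mPt\nabla f(\vx_t)) \le f(\vx_t) - \frac12\norm*{\nabla f(\vx_t)}_{\mPt}^2 .
\]
Next, since $\mPstar \in \setS_t$ by assumption and $\mPt \in \setS_t$ is $\gamma$-competitive (\Cref{def:competitive}), we have $\norm*{\nabla f(\vx_t)}_{\mPt}^2 \ge \gamma\,\norm*{\nabla f(\vx_t)}_{\mPstar}^2$, hence
\[
  f(\vx_{t+1}) \le f(\vx_t) - \frac{\gamma}{2}\norm*{\nabla f(\vx_t)}_{\mPstar}^2 .
\]

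The remaining ingredient — the step I expect to require the most care, though it is standard — is to lower-bound $\norm*{\nabla f(\vx_t)}_{\mPstar}^2$ by a multiple of the suboptimality gap. I would invoke the left (strong-convexity) inequality of \eqref{eq:relative-smooth-strong-convexity}: for every $\vy$,
\[
  f(\vy) \ge f(\vx_t) + \lin{\nabla f(\vx_t), \vy - \vx_t} + \frac{1}{2\kappa_*}\norm*{\vy - \vx_t}_{\mPstar^{-1}}^2 ,
\]
then minimize the right-hand side over $\vy$ (completing the square: the minimizer is $\vy = \vx_t - \kappa_*\mPstar\nabla f(\vx_t)$, with minimum value $f(\vx_t) - \frac{\kappa_*}{2}\norm*{\nabla f(\vx_t)}_{\mPstar}^2$) and evaluate the left-hand side at $\vy = \vx_*$. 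This yields
\[
  f(\vx_*) \ge f(\vx_t) - \frac{\kappa_*}{2}\norm*{\nabla f(\vx_t)}_{\mPstar}^2 ,
  \qquad\text{equivalently}\qquad
  \norm*{\nabla f(\vx_t)}_{\mPstar}^2 \ge \frac{2}{\kappa_*}\bigl(f(\vx_t) - f(\vx_*)\bigr) .
\]

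Finally, plugging this bound into the displayed inequality for $f(\vx_{t+1})$ and subtracting $f(\vx_*)$ from both sides gives
\[
  f(\vx_{t+1}) - f(\vx_*) \le \paren{1 - \frac{\gamma}{\kappa_*}}\bigl(f(\vx_t) - f(\vx_*)\bigr),
\]
which is the claim. The only point worth checking carefully is that \eqref{eq:relative-smooth-strong-convexity} indeed follows from the definition of $(\mPstar,\kappa_*)$ in \eqref{eq:optimal-preconditioner-def} — a substitution $\vx \mapsto \mPstar^{1/2}\vx$ turns the Hessian sandwich $\frac1{\kappa_*}\mPstar^{-1}\preceq\nabla^2 f(\vx)\preceq\mPstar^{-1}$ into $\frac{1}{\kappa_*}$-strong-convexity and $1$-smoothness in $\norm*{\cdot}_{\mPstar^{-1}}$ — which the text already asserts; everything else is a short deterministic computation.
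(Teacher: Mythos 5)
Your proof is correct and follows the same three-step chain as the paper's own argument: accepted step gives $\tfrac12\norm*{\nabla f(\vx_t)}_{\mPt}^2$ decrease, $\gamma$-competitiveness passes this to $\mPstar$, and strong convexity in $\norm*{\cdot}_{\mPstar^{-1}}$ gives the gradient-domination bound $\norm*{\nabla f(\vx_t)}_{\mPstar}^2 \ge \tfrac{2}{\kappa_*}(f(\vx_t)-f(\vx_*))$. Your write-up is slightly more explicit in deriving step (3) by minimizing the quadratic lower bound, which the paper simply asserts, but the underlying argument is identical.
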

\vspace{-1em}
\begin{proof}
The proof relies on three inequalities. 
(1) The iterate $\vx_{t+1}$ yields sufficient progress (Eq. \ref{eq:sufficient-progress}),
(2) any accepted preconditioner $\mPt$ is $\gamma$-competitive in $\setS_t$ 
and thus with $\mPstar$, 
and (3) $f$ is $\nicefrac{1}{\kappa_*}$-strongly convex in $\norm{\cdot}{}_{\mPstar^{-1}}$, 
which implies $\kappa_* \smash{\frac{1}{2}} \norm*{\nabla f(\vx_t)}^2_{\mPstar} \geq f(\vx_t) - f(\vx_*)$.
Combining those yields
\begingroup
\setlength{\abovedisplayskip}{2pt}
\setlength{\belowdisplayskip}{2pt}
\aligns{
  f(\vx_{t+1}) 
  \stackrel{\scriptscriptstyle(1)}{\leq} f(\vx_t) - \frac{1}{2}\norm{\nabla f(\vx_t)}_{\mPt}^2
  \stackrel{\scriptscriptstyle(2)}{\leq} f(\vx_t) - \gamma \frac{1}{2}\norm{\nabla f(\vx_t)}_{\mPstar}^2
  \stackrel{\scriptscriptstyle(3)}{\leq} f(\vx_t) - \frac{\gamma}{\kappa_*} \paren{f(\vx_t) - f(\vx_*)}.
  \vspace{-.2em}
}
\endgroup
Subtracting $f(\vx_*)$ on both sides yields the contraction guarantee.
\end{proof}

\section{Separating hyperplanes in higher dimensions}
\label{sec:direction}

In one dimension, if the step-size $\alpha$ does not satisfy the
sufficient progress condition~\eqref{eq:armijo}, we know $\alpha >
\nicefrac{1}{L}$ and can rule out any $\alpha' \geq \alpha$. 
We are looking for a generalization to higher dimensions: if the queried
preconditioner fails the \sufficientprogressnl{} condition, we should be
able to discard all larger preconditioners. The notion of \emph{valid}
preconditioners formalizes this idea.
\begin{definition}[Valid preconditioner]
\label{def:invalid}
A preconditioner $\mP$ is \emph{valid} 
if $\mP^{\nicefrac{1}{2}}\nabla^2 f(\vx)\mP^{\nicefrac{1}{2}} \preceq \mI$ for all $\vx$,
which guarantees that $\mP$ satisfies the \sufficientprogress{} condition, 
and \emph{invalid} otherwise. %
\end{definition}

\begin{figure}[t]
  \begin{subfigure}[t]{0.31\textwidth}
    \centering
    \includegraphics[width=.55\textwidth]{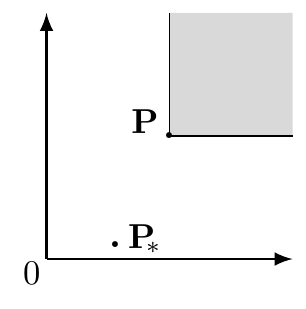}%
    \caption{
      Failing the Armijo condition cuts the interval in half
      in one dimension, 
      but only removes $\nicefrac{1}{2^d}$ of the volume in $d$ dimensions.
    }
    \label{fig:example-lack-info}
  \end{subfigure}\hfill\begin{subfigure}[t]{0.31\textwidth}
    \centering
    \includegraphics[width=.55\textwidth]{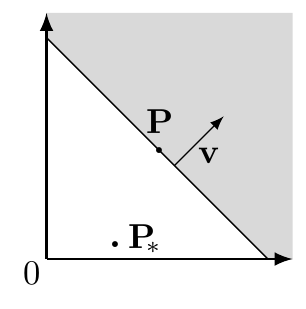}%
    \caption{
      Half-space $\setH_>(\vu)$
      obtained by using the hypergradient
      when failing the Armijo condition at $\mP$
      in \Cref{lem:starting-hyperplane-main}.
    }
    \label{fig:hp}
  \end{subfigure}\hfill\begin{subfigure}[t]{0.31\textwidth}
    \centering
    \includegraphics[width=.55\textwidth]{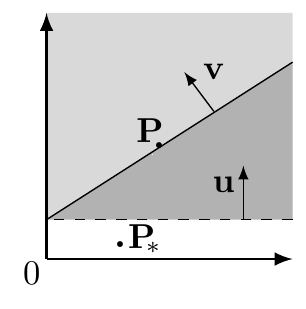}%
    \caption{
      Stronger half-space $\setH_>(\vv)$
      described by \cref{lem:stronger-hp},
      removing $\mP'\,{\succ}\,\mP$
      for any $\mP$ ruled out by $\setH_>(\vu)$
      in \Cref{lem:starting-hyperplane-main}.
    }
    \label{fig:stronger-hp}
  \end{subfigure}
\caption{
{\bf Lack of information from the ordering
and separating hyperplanes.}
}
\end{figure}

Validity is a global property: a preconditioner $\mP$ might lead to
sufficient progress locally but still be invalid. 
Using the partial order, 
if $\mP$ is invalid then any preconditioner $\mP' \succeq \mP$ is
also invalid. 
However, this property alone only discards an exceedingly small portion
of the feasible region in high dimensions. Consider the example
illustrated in \Cref{fig:example-lack-info}: if the diagonals are in a
box \(\setB(\vb)\), the fraction of volume discarded in this way if
\((\nicefrac{1}{2})\Diag(\vb)\)
is invalid is only
\(\smash{\nicefrac{1}{2^d}}\).

To efficiently search for valid preconditioners, we show that if \(f\)
is convex, then the gradient of the \sufficientprogressnl{} condition
gives a \emph{separating hyperplane} for valid preconditioners. That is,
it gives a vector $\smash{ \vu \in \R^d }$ such that if \(\vp \in
\Rnn^d\) satisfies \(\lin{\vu, \vp} > 1\), then \(\Diag(\vp)\) is
invalid, as illustrated in \Cref{fig:hp}. We use the following notation
to denote normalized half-spaces:
\aligns{
    \setH_{>}(\vu) \coloneqq \setst{\vp \in \Rnn^d}{ \lin{\vu,\vp} > 1}
  && 
  \text{and}
  && 
    \setH_{\leq}(\vu) \coloneqq \{\vp \in \Rnn^d : \lin{\vu,\vp} \leq 1\}.
}
\begin{proposition}[Separating hyperplane in preconditioner space]
\label{lem:starting-hyperplane-main}
  Suppose $\mQ=\Diag(\vq) \succ 0$
  does not lead to \sufficientprogress{} at \(\vx\), 
  and let $h(\vq)$ be the gap in the sufficient progress condition,
  \aligns{
    h(\vq)
    \coloneqq f(\vx - \mQ \nabla f(\vx)) - f(\vx) + \tfrac{1}{2}\norm{\nabla f(\vx)}_{\mQ}^2 > 0.
  } Then $\Diag(\vp)$ for any \(\vp\) in the following half-space
  satisfies $h(\vp) > 0$ and is also \invalid{}, 
  \alignn{
    \setst{\vp \in \R^d}{ \lin{\nabla h(\vq), \vp} > \lin{\nabla h(\vq), \vq} - h(\vq)},
    \label{eq:part-1}
  } 
  This half-space is equal to \(\setH_{>}(\vv)\) with $\vv$ given by 
  $\vv = \nicefrac{\nabla h(\vq)}{\paren{\lin{\nabla h(\vq), \vq} - h(\vq)}}$, or 
  \alignn{%
    \vv \coloneqq \frac{
    (\frac{1}{2}\vg - \vg^+) \odot \vg
  }{
    f(\vx) 
    - \lin{\vg^+, \mQ \vg} 
    - f(\vx^+) 
  },
  &&\text{with}&&
  \left\{\begin{aligned}
  \vx^+ &\coloneqq \vx - \mQ \nabla f(\vx),\\
  (\vg, \vg^+) &\coloneqq (\nabla f(\vx), \nabla f(\vx^+)).
  \end{aligned}\right.
  \label{eq:strong-sep-hyp}
 }
\end{proposition}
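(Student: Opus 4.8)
The plan is to observe that the sufficient‑progress gap, viewed as a function of the preconditioner's diagonal, is convex in that diagonal, so a single hypergradient evaluation at the queried point $\vq$ produces a half‑space entirely contained in the invalid region (as in \Cref{fig:hp}); what then remains is to differentiate $h$ explicitly and normalize this half‑space into the form $\setH_>(\vv)$ to recover \eqref{eq:strong-sep-hyp}.

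Write $\vg = \nabla f(\vx)$ and $\vx^+(\vp) = \vx - \Diag(\vp)\vg$, so that, for a general diagonal preconditioner $\Diag(\vp)$, the sufficient‑progress gap is $h(\vp) = f(\vx^+(\vp)) - f(\vx) + \tfrac{1}{2}\lin{\vg, \Diag(\vp)\vg}$ (the $h(\vq)$ of the statement being the value at $\vp=\vq$). The map $\vp \mapsto \vx^+(\vp)$ is affine and $f$ is convex, so $\vp \mapsto f(\vx^+(\vp))$ is convex; the term $\tfrac{1}{2}\lin{\vg,\Diag(\vp)\vg}$ is linear in $\vp$; hence $h$ is convex and differentiable, with $\nabla h(\vp) = \big(\tfrac{1}{2}\vg - \nabla f(\vx^+(\vp))\big)\odot\vg$. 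In particular $\nabla h(\vq) = (\tfrac{1}{2}\vg - \vg^+)\odot\vg$, the numerator of \eqref{eq:strong-sep-hyp}. By the first‑order inequality for the convex function $h$, for every $\vp$, $h(\vp) \ge h(\vq) + \lin{\nabla h(\vq), \vp - \vq} = \lin{\nabla h(\vq), \vp} - \big(\lin{\nabla h(\vq),\vq} - h(\vq)\big)$. Therefore any $\vp$ with $\lin{\nabla h(\vq), \vp} > \lin{\nabla h(\vq), \vq} - h(\vq)$ satisfies $h(\vp) > 0$, i.e., $\Diag(\vp)$ does not yield \sufficientprogress{} at $\vx$; since a \valid{} preconditioner satisfies \eqref{eq:sufficient-progress} at every point (\Cref{def:invalid}), $\Diag(\vp)$ must be \invalid{}. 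This establishes the half‑space \eqref{eq:part-1}.

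To put \eqref{eq:part-1} in the normalized form $\setH_>(\vv)$, I would verify that the offset $c \coloneqq \lin{\nabla h(\vq),\vq} - h(\vq)$ is strictly positive, so that dividing by it is orientation‑preserving. Expanding $\lin{\nabla h(\vq),\vq}$ and $h(\vq)$ and using $\vx - \vx^+ = \mQ\vg$, a short computation simplifies $c$ to $f(\vx) - f(\vx^+) - \lin{\vg^+, \mQ\vg} = f(\vx) - f(\vx^+) - \lin{\nabla f(\vx^+), \vx - \vx^+}$, which by $\mu$‑strong convexity of $f$ is at least $\tfrac{\mu}{2}\norm{\mQ\vg}^2$; since $h(\vq) > 0$ forces $\vg \ne 0$ and $\mQ\succ 0$, we conclude $c > 0$ (and $c$ is exactly the denominator in \eqref{eq:strong-sep-hyp}). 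Dividing the inequality defining \eqref{eq:part-1} by $c$ gives $\lin{\vv,\vp} > 1$ with $\vv = \nabla h(\vq)/c$, and substituting the closed forms for $\nabla h(\vq)$ and $c$ computed above produces \eqref{eq:strong-sep-hyp}. On $\Rnn^d$, where the candidate preconditioners live, this half‑space is $\setH_>(\vv)$.

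The one point requiring care is the strict positivity of $c$, i.e., that the denominator of \eqref{eq:strong-sep-hyp} does not vanish — this is what legitimizes the normalization, and it is where strong convexity (rather than mere convexity) is used. Everything else is the standard fact that the gradient of a convex function yields a supporting hyperplane, together with the routine differentiation of $h$ and the algebraic simplification of $\lin{\nabla h(\vq),\vq} - h(\vq)$.
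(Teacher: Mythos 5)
Your proof is correct and tracks the paper's argument closely: convexity of $h$ (affine composition with $f$ plus a linear term), the first-order inequality at $\vq$, rearranging to get the half-space \eqref{eq:part-1}, differentiating to obtain $\nabla h(\vq) = (\tfrac12\vg - \vg^+)\odot\vg$, and simplifying the offset to $f(\vx) - f(\vx^+) - \lin{\vg^+,\mQ\vg}$ all match the paper step for step. The one place you genuinely deviate is the strict positivity of the normalizing constant $c \coloneqq \lin{\nabla h(\vq),\vq} - h(\vq)$. The paper first derives $c \ge 0$ from convexity of $h$ and $h(0)=0$, then rules out $c=0$ by contradiction: if $c=0$, the truncated half-space $\{\vp \in \Rnn^d : \lin{[\nabla h(\vq)]_+,\vp} > 0\}$ furnished by \cref{lem:stronger-hp} would contain $(\nicefrac{1}{L})\vone$, contradicting that $(\nicefrac{1}{L})\mI$ is a valid preconditioner for an $L$-smooth $f$. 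You instead recognize $c = f(\vx) - f(\vx^+) - \lin{\nabla f(\vx^+),\vx-\vx^+}$ as a Bregman-type gap and bound it below by $\tfrac{\mu}{2}\norm{\mQ\vg}^2 > 0$ via $\mu$-strong convexity, with strictness coming from $h(\vq)>0 \Rightarrow \vg\neq 0$ and $\mQ\succ 0$. Your route is shorter and self-contained (it avoids a forward reference to \cref{lem:stronger-hp} inside this proof); the paper's route uses only convexity and $L$-smoothness, which keeps this proposition independent of the strong-convexity assumption. Both are valid under the paper's standing hypotheses.
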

\vspace{-1.0em}
\begin{proof}[Proof idea]
If $f$ is convex, then $h$ also is. Convexity guarantees that  $h(\vp)
\geq h(\vq) + \lin{\nabla h(\vq), \vp - \vq}$ for any \(\vp\). A
sufficient condition for $h(\vp) > 0$, which means $\vp$ is invalid, is
whether $h(\vq) + \lin{\nabla h(\vq), \vp - \vq} > 0$ holds.
Reorganizing yields \cref{eq:part-1}, and \cref{eq:strong-sep-hyp}
expresses the half-space in normalized form, $\setH_>(\vv)$, expanding
$h$ in terms of $f$, its gradients, and $\mQ$.
\end{proof}
\vspace{-0.5em}

The half-space in \cref{lem:starting-hyperplane-main}
is however insufficient to find good enough cutting-planes, 
as it uses convexity to invalidate preconditioners
but ignores the ordering that if $\mP$ is \invalid{}, any $\mP' \succeq \mP$ is also \invalid{}. 
If such preconditioners are not already ruled out by convexity, 
we can find a stronger half-space by removing them, 
as illustrated in \Cref{fig:stronger-hp}. 
We defer proofs to \cref{apx:hyperplanes}.

\begin{proposition}[Stronger hyperplanes]
\label{lem:stronger-hp}
If $\setH_>(\vv)$ is a half-space given by
\cref{lem:starting-hyperplane-main}, then $\setH_>(\vu)$ where $\vu
\coloneqq \max\{\vv, 0\}$ element-wise is a stronger half-space in the
sense that $\setH_>(\vv) \subseteq \setH_>(\vu)$, and $\setH_>(\vu)$
contains only invalid preconditioners.
\end{proposition}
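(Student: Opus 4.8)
The plan is to prove the two claims in \Cref{lem:stronger-hp} separately: (a) the containment $\setH_>(\vv) \subseteq \setH_>(\vu)$, and (b) that $\setH_>(\vu)$ contains only invalid preconditioners. For (a), since both half-spaces are of the form $\setst{\vp \in \Rnn^d}{\lin{\vw, \vp} > 1}$ and $\vu = \max\{\vv, 0\}$ satisfies $\vu \geq \vv$ entrywise, any nonnegative $\vp$ with $\lin{\vv, \vp} > 1$ automatically has $\lin{\vu, \vp} \geq \lin{\vv, \vp} > 1$. This is immediate from nonnegativity of the coordinates of $\vp$ and the fact that raising any coordinate of the defining vector can only increase the inner product. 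So (a) is a one-line monotonicity argument.

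The substance is in (b). The natural approach is: take any $\vp \in \setH_>(\vu)$, and construct from it a point $\vp'$ that lies in $\setH_>(\vv)$ and is coordinate-wise dominated by $\vp$, i.e. $\vp' \leq \vp$ with $\Diag(\vp') = \mP'$ invalid; then invalidity of $\Diag(\vp)$ follows from the monotonicity of invalidity under $\succeq$ noted after \Cref{def:invalid} (if $\mP'$ is invalid and $\mP \succeq \mP'$, then $\mP$ is invalid). Concretely, let $N = \setst{i}{\vv[i] < 0}$ be the coordinates where $\vv$ and $\vu$ differ (on these, $\vu[i] = 0$). Define $\vp'$ by keeping $\vp'[i] = \vp[i]$ for $i \notin N$ and \emph{increasing} $\vp'[i]$ for $i \in N$ — wait, that breaks domination. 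The correct move is the reverse: since $\vu[i] = 0$ on $N$, increasing $\vp[i]$ for $i \in N$ does not change $\lin{\vu, \vp}$, but it \emph{decreases} $\lin{\vv, \vp}$ (as $\vv[i] < 0$ there). So to land in $\setH_>(\vv)$ I instead want to push $\vp[i]$ on $N$ down toward... no — I should push them up and use that $\setH_>(\vv)$ together with the ordering already rules these out. The cleanest formulation: \Cref{lem:starting-hyperplane-main} gives that every $\vp$ with $\lin{\vv,\vp} > 1$ is invalid, and the post-\Cref{def:invalid} remark gives that invalidity is upward-closed under $\geq$. So it suffices to show: for every $\vp \in \setH_>(\vu)$ there exists $\vp'' \leq \vp$ with $\lin{\vv, \vp''} > 1$. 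Take $\vp''[i] = \vp[i]$ for $i \notin N$ and $\vp''[i] = 0$ for $i \in N$. Then $\vp'' \leq \vp$, and $\lin{\vv, \vp''} = \sum_{i \notin N}\vv[i]\vp[i] = \sum_{i \notin N}\vu[i]\vp[i] = \lin{\vu, \vp} > 1$, using that $\vv$ and $\vu$ agree off $N$ and $\vu$ vanishes on $N$. Hence $\Diag(\vp'')$ is invalid by \Cref{lem:starting-hyperplane-main}, and since $\vp \geq \vp''$, $\Diag(\vp)$ is invalid too.

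The main obstacle, such as it is, is getting the direction of the domination argument right: one must truncate $\vp$ to zero on the negative coordinates of $\vv$ (not inflate it), exploit that $\vu$ vanishes exactly there so the inner product $\lin{\vu,\vp}$ is unchanged, and then invoke upward-closedness of invalidity to lift the conclusion from the truncated point back to $\vp$. Once the bookkeeping on the index set $N$ is set up this way, both parts are short. I would also note at the start that both half-spaces are subsets of $\Rnn^d$, so nonnegativity of coordinates is available throughout — this is what makes both the monotonicity in (a) and the truncation in (b) work.
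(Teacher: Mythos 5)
Your proposal is correct, and it takes essentially the same route as the paper: part (a) follows from $\vu \geq \vv$ and nonnegativity of $\vp$, and part (b) truncates any $\vp \in \setH_>(\vu)$ to zero on the coordinates where $\vv$ is negative, verifies the truncated point still lies in $\setH_>(\vv)$ (since $\vu$ vanishes exactly there), and then lifts invalidity back to $\vp$ via the upward-closedness remark after \Cref{def:invalid}. The paper packages this as a slightly more general lemma in normalized form $\setH_{\vw,\alpha}$, but the construction and the two facts used are identical; the only difference is that you arrive at the right truncation after a couple of visible false starts, whereas a final write-up should state it directly.
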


\section{Cutting-plane methods}
\label{sec:cutting-plane}

The multidimensional backtracking method is in fact
a cutting-plane method that uses separating hyperplanes (from  \Cref{lem:stronger-hp}) to search for valid preconditioners. 
The canonical example is the ellipsoid method \citep{YudinN76a, Shor77a}, but its computational cost is $\Omega(d^2)$ in $\mathbb{R}^d$.
We now describe cutting-plane methods with three desirable properties: the preconditioners have good competitive ratios,
the feasible set shrinks significantly when backtracking,
and the computational cost is $O(d)$.
There are many details, but 
the overall idea is similar to the ellipsoid method.

{\bf A simple warm-up: boxes.} Consider the case when \(\setS_0\) consists
of diagonal matrices  
with diagonals in the box \(\setB(\vb_0) = \setst{\vp \in \smash{\Rnn^d}}{\vp
\leq \vb_0}\). We pick a candidate preconditioner by backtracking from
the largest point in $\setB(\vb_0)$ by some constant $\gamma < 1$,
taking \(\mP \coloneqq \gamma \Diag(\vb_0)\). If \(\mP\) satisfies the
Armijo condition~\eqref{eq:sufficient-progress}, we take a gradient
step. If it does not, we compute the vector $\vu_0$ as
in~\cref{lem:stronger-hp}, and obtain a half-space $\setH_{>}(\vu_0)$
that contains only invalid preconditioners. We then know we only need to
search inside \(\setS_0 \cap \setH_{\leq}(\vu_0)\). However, maintaining
the set $\setS_0 \cap \setH_{\leq}(\vu_0) \cap \cdots \cap
\setH_{\leq}(\vu_t)$ would be too complex to fit in $O(d)$ time or
memory. To reduce complexity, we define \(\setS_{t+1}\) as the box
$\setB(\vb_{t+1})$ of minimum volume containing $\setB(\vb_t) \cap
\setH_{\leq}(\vu_t)$, as illustrated in \cref{fig:box-good}. Due to this
restriction, we might not be able to find a smaller set; the original
box $\setB(\vb_{t})$ may already be the minimum volume box containing
$\setB(\vb_t) \cap \setH_{\leq}(\vu_t)$ if $\vu_t$ does not cut deep
enough, as illustrated in \cref{fig:box-bad}. However, with enough
backtracking ($\gamma < \nicefrac{1}{d}$), we can show that the new box
is smaller. This yields the following subroutines to fill in the gaps of
\cref{fig:pseudocode} (detailed in \cref{apx:set-box}) \alignn{
  \textstyle
  \CANDIDATE(\setS_t, \gamma, \vx_t) \coloneqq \gamma \Diag(\vb_t),
  &&
  \CUT(\setS_t, \mPt) \coloneqq \setst{\Diag(\vp)}{\vp_t \in \setB(\vb_{t+1})},
  \label{eq:wtv}
}
where $\setS_t = \setB(\vb_t)$ and
\(\vb_{t+1} \coloneqq \min\braces{\vb_t, \nicefrac{1}{\vu_t}}\)
element-wise,
which give the following guarantees.
\begin{theorem}
\label{thm:box}
Consider the multidimensional backtracking from \Cref{fig:pseudocode} 
initialized with a set \(\setS_0 = \setst{\Diag(\vp)}{\vp \in \setB(\vb_0)}\) containing $\mPstar$,
with the subroutines in~\cref{eq:wtv} with $\gamma = \nicefrac{1}{2d}$.
Then: (a) $\mPstar\in\setS_t$,
(b) the candidate preconditioner 
\(\mPt\) is \(\nicefrac{1}{2d}\)-competitive in $\setS_t$ for any $t$, 
and%
\aligns{
  \textstyle\text{(c)}\quad
  \Vol(\setB(\vb_{t+1})) \leq \frac{1}{d+1} \Vol(\setB(\vb_t))
  \quad\text{when $\mPt$ fails \cref{eq:sufficient-progress}.}
}
In particular, \(\CUT\) is not called more than \(d \log_{d+1}(L \norm{\vb_0}_{\infty})\) times.
\end{theorem}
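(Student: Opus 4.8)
The plan is to establish the three claims (a), (b), (c) in order, since each builds on the previous, and then derive the iteration bound from (c) together with a lower bound on the volume of any box still containing $\mPstar$.

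For (a), I would argue by induction on $t$. The base case holds by assumption. For the inductive step, suppose $\mPstar \in \setS_t = \setB(\vb_t)$, i.e., $\diag(\mPstar) \le \vb_t$. If $\mPt$ passes \eqref{eq:sufficient-progress}, then $\setS_{t+1} = \setS_t$ and there is nothing to prove. If $\mPt$ fails, then $\setS_{t+1}$ is the minimum-volume box containing $\setB(\vb_t) \cap \setH_{\le}(\vu_t)$, with $\vb_{t+1} = \min\{\vb_t, 1/\vu_t\}$ element-wise. By \Cref{lem:stronger-hp} (and \Cref{lem:starting-hyperplane-main}), $\setH_>(\vu_t)$ contains only invalid preconditioners, so since $\mPstar$ is valid (\Cref{def:invalid}, via its defining property \eqref{eq:optimal-preconditioner-def}), we must have $\diag(\mPstar) \in \setH_{\le}(\vu_t)$, i.e., $\lin{\vu_t, \diag(\mPstar)} \le 1$. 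Combined with $\diag(\mPstar) \le \vb_t$, element-wise positivity gives $\diag(\mPstar) \le \min\{\vb_t, 1/\vu_t\} = \vb_{t+1}$ coordinatewise (using $\vu_t \ge 0$ from \Cref{lem:stronger-hp} so that the single inner-product constraint, on the nonnegative orthant, forces each coordinate below $1/\vu_t[i]$ when the others are nonnegative — this needs a small argument that I would spell out: if $\vp[i] > 1/\vu_t[i]$ then $\lin{\vu_t,\vp} \ge \vu_t[i]\vp[i] > 1$). Hence $\mPstar \in \setB(\vb_{t+1})$.

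For (b), the candidate is $\mPt = \gamma\Diag(\vb_t)$ with $\gamma = 1/(2d)$. For any $\mQ = \Diag(\vq) \in \setS_t$ we have $\vq \le \vb_t$, so $\norm{\nabla f(\vx_t)}_{\mQ}^2 = \sum_i \vq[i]\,\vg[i]^2 \le \sum_i \vb_t[i]\,\vg[i]^2 = \tfrac{1}{\gamma}\norm{\nabla f(\vx_t)}_{\mPt}^2$, which is exactly the $\gamma$-competitive inequality of \Cref{def:competitive}. This is the short step.

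For (c), this is the main obstacle and the part requiring real work. We need: when $\mPt = \gamma\Diag(\vb_t)$ with $\gamma = 1/(2d)$ fails \eqref{eq:sufficient-progress}, the resulting $\vb_{t+1} = \min\{\vb_t, 1/\vu_t\}$ satisfies $\prod_i \vb_{t+1}[i] \le \tfrac{1}{d+1}\prod_i \vb_t[i]$. The key point is that $\mPt$ failing means $\gamma\vb_t \in \setH_>(\vu_t)$ strictly, i.e., $\lin{\vu_t, \gamma \vb_t} > 1$, so $\sum_i \vu_t[i]\vb_t[i] > 1/\gamma = 2d$. I would normalize by setting $w_i := \vu_t[i]\vb_t[i] \ge 0$ with $\sum_i w_i > 2d$, and note $\vb_{t+1}[i]/\vb_t[i] = \min\{1, 1/w_i\}$. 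So we must show $\prod_i \min\{1, 1/w_i\} \le 1/(d+1)$ whenever $\sum_i w_i > 2d$ and each $w_i \ge 0$. This is a clean optimization problem: maximize $\prod_i \min\{1,1/w_i\}$, equivalently minimize $\sum_{i : w_i > 1} \log w_i$ subject to $\sum w_i \ge 2d$. The maximizing configuration has all the "mass'' concentrated — at the optimum, coordinates with $w_i \le 1$ contribute factor $1$ and should be set to make room, so WLOG put $w_i = 1$ for $i \ge 2$ and $w_1 = 2d - (d-1) = d+1$, giving product $1/(d+1)$; one should check via a convexity/exchange argument that spreading mass across more "active'' coordinates only decreases the product (since $\log$ is concave, splitting a large $w$ into two values each $>1$ increases $\sum \log$). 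I would present this as a lemma: if $w \in \R_{\ge 0}^d$ with $\sum_i w_i \ge 2d$ then $\prod_i \min\{1, 1/w_i\} \le 1/(d+1)$, proved by the exchange argument above (push all excess onto one coordinate), and I expect getting the constant exactly right — reconciling the $\gamma = 1/(2d)$ choice with the $1/(d+1)$ volume factor — to be the fiddly part.

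Finally, the iteration bound: every time $\CUT$ is called the volume drops by a factor $d+1$, so after $k$ calls $\Vol(\setB(\vb_k)) \le (d+1)^{-k}\Vol(\setB(\vb_0)) = (d+1)^{-k}\prod_i \vb_0[i]$. On the other hand, by (a), $\mPstar \in \setB(\vb_k)$ always, and any valid preconditioner satisfies $\mP^{1/2}\nabla^2 f(\vx)\mP^{1/2} \preceq \mI$, hence (using $\nabla^2 f(\vx) \succeq \mu\mI$... actually more directly $\mu\|\cdot\|$) each coordinate of $\diag(\mPstar)$ is at least some $1/L$-type lower bound — precisely, validity forces $\vp[i] \le 1/L$ is the wrong direction; instead I'd note the box always contains the nonempty set $\setB(\text{something})$ of valid diagonals, and a box $\setB(\vb_k)$ containing any strictly positive point $\vp$ has $\Vol \ge \prod_i \vp[i] > 0$; taking $\vp = (1/L)\vone$ (which is valid since $\nabla^2 f \preceq L\mI$) gives $\Vol(\setB(\vb_k)) \ge L^{-d}$. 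Combining, $(d+1)^{-k}\prod_i\vb_0[i] \ge L^{-d}$, so $k \le \log_{d+1}\big(L^d \prod_i \vb_0[i]\big) \le \log_{d+1}\big((L\|\vb_0\|_\infty)^d\big) = d\log_{d+1}(L\|\vb_0\|_\infty)$, as claimed. The one subtlety to double-check is that $\gamma\Diag(\vb_0)$ or rather the initialization is consistent with $\mPstar \in \setS_0$ and that the lower bound point $(1/L)\vone$ indeed lies in every $\setB(\vb_k)$ by the same argument as (a) applied to this fixed valid preconditioner.
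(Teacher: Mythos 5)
Your proposal follows essentially the same route as the paper: induction using validity of $\mPstar$ for (a), the direct norm comparison for (b), a concentration/exchange argument for the volume drop in (c), and a volume lower bound via a known valid preconditioner for the iteration count. Your reformulation of (c)---setting $w_i := \vu_t[i]\vb_t[i]$, observing $\vb_{t+1}[i]/\vb_t[i] = \min\{1, 1/w_i\}$ and $\sum_i w_i > 2d$, and then minimizing $\prod_{i : w_i>1} w_i$ at a vertex of the polytope $\{\vw \ge \vone : \sum w_i = d+|S|\}$---is a somewhat cleaner packaging of what the paper does directly on $\vu$ via an explicit exchange of mass between two coordinates of $\vu$; the underlying idea (push excess onto one coordinate) is the same, and your lemma statement is exactly the normalized form of the paper's Lemma \ref{lemma:box_vol_shrink_apx}.

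One point you flagged but did not resolve is real: your iteration-count argument assumes $(\nicefrac{1}{L})\vone \in \setB(\vb_0)$, which is not guaranteed (the hypothesis is only $\mPstar \in \setS_0$, and $\vb_0[i]$ could be smaller than $\nicefrac{1}{L}$ in some coordinate). The paper's fix is to use $\vb_{\min}[i] := \min\{\nicefrac{1}{L}, \vb_0[i]\}$ as the lower-bound point, and to observe $\vb_t[i] \geq \vb_{\min}[i]$ for all $t$ because (i) a cut in coordinate $i$ sets $\vb_{t+1}[i] = 1/\vu_t[i]$, and (ii) $\vu_t[i] \le L$ since the valid preconditioner $(\nicefrac{1}{L})\Diag(\ve_i)$ must lie in $\setH_{\leq}(\vu_t)$. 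With $\vb_{\min}$ in place of $(\nicefrac{1}{L})\vone$ the bound $\Vol(\setB(\vb_0))/\Vol(\setB(\vb_{\min})) \le (L\norm{\vb_0}_\infty)^d$ goes through as you wrote. The rest of your proposal---including the small argument in (a) that $\vu_t \ge 0$ converts the inner-product constraint into a coordinatewise bound, and the observation that $\gamma\vb_t \in \setH_>(\vu_t)$ because $h(\gamma\vb_t) > 0$---is correct.
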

\vspace{-0.75em}
\begin{proof}[Proof idea]
To guarantee that the box shrinks,
we have to guarantee that the half-space $\setH_{\leq}(\vu_t)$ cuts deep enough.
We know that the half-space has to exclude the query point $\mPt$, i.e. $\lin{\vp_t, \vu_t} \geq 1$, 
by \cref{lem:starting-hyperplane-main}
and that $\vu_t \geq 0$ by \cref{lem:stronger-hp}. 
Querying \(\mPt\) sufficiently close to the origin,
by taking $\gamma = \nicefrac{1}{2d}$, 
is then enough to guarantee the decrease.
To bound the total number of cuts, 
we note that the sets $\setB(\vb_t)$ have a minimum volume $\Vol_{\min}$, 
as they have to contain the valid preconditioners.
The number of cuts is at most 
\(\log_c\paren{\Vol(\setB(b_0))/\Vol_{\min}}\) 
for \(c = d+1\).
We then bound $\Vol(\setB(b_0)) \leq \norm*{\vb_0}_{\infty}^d$
and $\Vol_{\min} \geq \nicefrac{1}{L^d}$ 
as $(\nicefrac{1}{L})\mI$ is a valid preconditioner.
\end{proof}
\vspace{-.5em}

\begin{figure}
\centering
  \begin{subfigure}[b]{0.58\textwidth}
    \centering
    \includegraphics[width=.15\paperwidth]{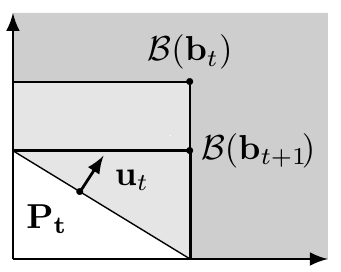}%
    \vspace{-.3em}
    \caption{
      {\bf Minimum-volume box containing the intersection.}
      We maintain sets of low-complexity 
      by computing the minimum-volume box $\setB(\vb_{t+1})$ 
      containing the intersection of the initial box $\setB(\vb_t)$
      and the half-space $\setH_{\leq}(\vu_t)$ 
      obtained from \Cref{lem:stronger-hp}
      when the preconditioner $\mPt$ 
      fails to yield sufficient decrease.
    }
    \label{fig:box-good}
  \end{subfigure}\hfill\begin{subfigure}[b]{0.39\textwidth}
    \centering
    \includegraphics[width=.15\paperwidth]{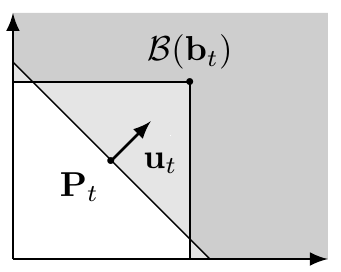}%
    \vspace{-.3em}
    \caption{
      {\bf Need sufficient backtracking.}
      If the candidate preconditioner $\mPt$ 
      selected inside the initial box $\setB(\vb_t)$
      is not close enough to 0,
      there might not be a box smaller than $\setB(\vb_t)$
      that contains the intersection.
    }
    \label{fig:box-bad}
  \end{subfigure}
\caption{\textbf{Minimum-volume enclosing boxes}}
\end{figure}

\subsection{Multidimensional Backtracking with Centered Axis-aligned Ellipsoids}

We now improve the competitive ratio from \(O(\nicefrac{1}{d})\) to $O(\nicefrac{1}{\sqrt{d}})$ by switching from boxes to ellipsoids.
Whereas general ellipsoids 
would require $\Omega(d{}^2)$ complexity (as they involve a $d {\times} d$ matrix),
we consider \emph{centered, axis-aligned ellipsoids},
defined by a diagonal matrix $\mA = \Diag(\va)$,
of the form
$\smash{\setE(\va) \coloneqq \{\vp \in \Rnn^d : \norm*{\vp}_{\mA} \leq 1\}}$,
where $\smash{\norm*{\vp}_{\mA}^2 = \lin{\vp, \mA \vp}}$.
As preconditioners are non-negative, 
we consider only the positive orthant of the ellipsoid.
For simplicity, we refer to those sets as \emph{ellipsoids}.

{\bf Candidate preconditioner.}
In the box example, 
we selected the candidate preconditioner
by backtracking from the largest preconditioner in the box.
With an ellipsoid, 
there is no \emph{largest} preconditioner. 
We need to choose where to backtrack from.
To ensure the candidate preconditioner $\mP$
is competitive (\Cref{def:competitive}), 
we backtrack from the preconditioner 
that maximizes the progress $\norm*{\nabla f(\vx)}_\mP^2$,
\alignn{
  \label{eq:max_over_ellipsoid}
  \argmax_{\vp \in \setE(\va)} 
  \norm{\nabla f(\vx)}_{\mP}^2 
  = 
  \frac{\mA^{-1} \nabla f(\vx)^2}{\norm{\nabla f(\vx)^2}_{\mA^{-1}}},
  && 
  \textstyle\paren{\text{where } \nabla f(\vx)^2 \coloneqq \nabla f(\vx) \odot \nabla f(\vx)}.
}
This lets us pick the preconditioner that makes the most progress
\emph{for the current gradient}, 
and will let us improve the competitive ratio 
by allowing a backtracking coefficient of $\nicefrac{1}{\sqrt{d}}$
instead of $\nicefrac{1}{d}$.

{\bf Cutting.} 
To complete the algorithm,
we need to find a new set $\setE(\vb_{t+1})$ with smaller volume
which contains the intersection of the previous set $\setE(\vb_t)$
and the half-space $\setH_{\leq}(\vu_t)$.
Unlike the box approach, 
the minimum volume ellipsoid has no closed form solution.
However, if we backtrack sufficiently, by a factor of $\gamma < \nicefrac{1}{\sqrt{d}}$,
we can find an ellipsoid guaranteed to decrease the volume.
\begin{lemma}
  \label{lem:lemlem}
  Consider the ellipsoid $\setE(\va)$ 
  defined by $\mA = \Diag(\va)$ for $\va \in \Rplus^d$.
  Let $\vp \in \setE(\va)$ 
  be a point sufficiently deep inside the ellipsoid, 
  such that $\norm{\vp}_{\mA} \leq \nicefrac{1}{\sqrt{2d}}$,
  and $\setH_{>}(\vu)$ be a half-space 
  obtained from \cref{lem:stronger-hp} at $\vp$.
  The intersection $\setE(\va) \cap \setH(\vu)_{\leq}$
  is contained in the new ellipsoid 
  \alignn{
    \setE(\va^+(\va, \vu)),
    &&\text{ where }&&
    \va^+(\va, \vu) = \lambda \va + (1 - \lambda) \vu^2,
    &&
    \textstyle
    \lambda = \frac{\ell}{d} \frac{d-1}{\ell-1},
    &&
    \ell = \norm{\vu }_{\mA^{-1}}^2,
   \label{eq:new-ellipsoid}
  }
  which has a smaller volume, 
  \(\Vol(\setE(\va^+(\va, \vu)) \leq c \Vol(\setE(\va))\),
  where $c = \nicefrac{\sqrt[4]{e}}{\sqrt{2}} \approx 0.91$.
\end{lemma}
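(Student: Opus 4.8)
The plan is to reduce the problem to a clean "one variable at a time" optimization. Fix the ellipsoid $\setE(\va)$ with $\mA = \Diag(\va)$, the deep query point $\vp$ with $\norm{\vp}_{\mA} \le \nicefrac{1}{\sqrt{2d}}$, and the separating vector $\vu \ge 0$ coming from \cref{lem:stronger-hp}. Recall that $\vu$ satisfies $\lin{\vu,\vp} \ge 1$ (since $\vp$ fails sufficient progress, $\vp \in \setH_>(\vu)$ up to boundary), and that $\vu \ge 0$ entrywise. The first step is a \emph{containment} claim: any $\vq \in \setE(\va) \cap \setH_{\le}(\vu)$ lies in $\setE(\va^+)$ with $\va^+ = \lambda \va + (1-\lambda)\vu^2$ for the stated $\lambda$. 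Because $\setE(\va)$ is defined by $\lin{\vq, \mA\vq}\le 1$ and $\setH_{\le}(\vu)$ by $\lin{\vu,\vq} \le 1$, and the new ellipsoid by $\lin{\vq,\mathrm{Diag}(\va^+)\vq}\le 1$, it suffices to show the pointwise/quadratic-form inequality
\aligns{
  \lin{\vq, \mathrm{Diag}(\va^+)\vq} \le \lambda \lin{\vq,\mA\vq} + (1-\lambda)\lin{\vu,\vq}^2,
}
which, after substituting $\va^+$, would follow if $\lin{\vu,\vq}^2 \ge \lin{\vq, \mathrm{Diag}(\vu^2)\vq}$ --- but that is false in general, so instead I would argue directly: on the intersection, $\lin{\vq,\mA\vq}\le 1$ and $\lin{\vu,\vq}\le 1$, hence $\lin{\vq,\mathrm{Diag}(\va^+)\vq} = \lambda\lin{\vq,\mA\vq} + (1-\lambda)\lin{\vq,\mathrm{Diag}(\vu^2)\vq}$ and I must bound $\lin{\vq,\mathrm{Diag}(\vu^2)\vq}$ by a convex combination of $1$ and $\lin{\vu,\vq}^2$. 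This is the standard "ellipsoid method in the diagonal/axis-aligned world" calculation: the minimum-volume enclosing ellipsoid of a half-ellipsoid, specialized so that the center stays at the origin and the axes stay aligned. The coefficient $\lambda = \frac{\ell}{d}\cdot\frac{d-1}{\ell-1}$ with $\ell = \norm{\vu}_{\mA^{-1}}^2$ is exactly what comes out of optimizing the one-parameter family $\va(\lambda) = \lambda\va + (1-\lambda)\vu^2$ for minimal volume subject to containment; I would derive it by writing the volume ratio and setting its derivative in $\lambda$ to zero.

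The second step is the \emph{volume bound}. The volume of $\setE(\va)$ (the positive-orthant piece) is proportional to $\prod_i \va[i]^{-1/2} = (\det \mA)^{-1/2}$, so the volume ratio is $\Vol(\setE(\va^+))/\Vol(\setE(\va)) = \sqrt{\det\mA / \det\mathrm{Diag}(\va^+)} = \prod_i \sqrt{\va[i]/\va^+[i]}$. Writing $\va^+[i] = \lambda\va[i] + (1-\lambda)\vu[i]^2 = \va[i](\lambda + (1-\lambda)\vu[i]^2/\va[i])$, the ratio becomes $\prod_i (\lambda + (1-\lambda) w_i)^{-1/2}$ where $w_i = \vu[i]^2/\va[i] \ge 0$ and $\sum_i w_i = \norm{\vu}_{\mA^{-1}}^2 = \ell$. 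Now I would use concavity of $\log$ (AM--GM / Jensen): $\sum_i \log(\lambda + (1-\lambda)w_i) \ge$ ... actually I want an \emph{upper} bound on the ratio, i.e.\ a \emph{lower} bound on $\sum_i \log(\lambda+(1-\lambda)w_i)$; concavity gives an upper bound, so instead I exploit that the function $w \mapsto \log(\lambda + (1-\lambda)w)$ is concave and the $w_i$ sum to a fixed $\ell$ with each $w_i \ge 0$, so the minimum of $\sum_i \log(\lambda + (1-\lambda)w_i)$ over the simplex-scaled-by-$\ell$ is attained at an extreme point: one coordinate equal to $\ell$, the rest zero. That gives $\prod \sqrt{\va[i]/\va^+[i]} \le \big(\lambda + (1-\lambda)\ell\big)^{-1/2} \cdot \lambda^{-(d-1)/2}$. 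Plugging in $\lambda = \frac{\ell(d-1)}{d(\ell-1)}$ and simplifying should collapse this to the claimed constant $c = \nicefrac{\sqrt[4]{e}}{\sqrt 2}$, using the bound $(1 - 1/d)^{d-1} \ge e^{-1}$ somewhere and the fact that $\ell$ is bounded below (we need $\ell \ge$ some threshold for $\lambda \in (0,1)$, which is exactly where the deep-point hypothesis $\norm{\vp}_{\mA}\le 1/\sqrt{2d}$ enters).

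The deep-point hypothesis is used to certify $\ell = \norm{\vu}_{\mA^{-1}}^2 \ge 2d$ (or some explicit bound of that order): since $\lin{\vu,\vp} \ge 1$ and $\vp \in \setE(\va)$ with $\norm{\vp}_\mA \le 1/\sqrt{2d}$, Cauchy--Schwarz in the $\mA$-inner product gives $1 \le \lin{\vu,\vp} = \lin{\mA^{-1/2}\vu, \mA^{1/2}\vp} \le \norm{\vu}_{\mA^{-1}}\norm{\vp}_{\mA} \le \norm{\vu}_{\mA^{-1}}/\sqrt{2d}$, hence $\ell \ge 2d$. With $\ell \ge 2d$ one checks $\lambda \in (0,1)$ and that the volume expression, monotone in $\ell$ on this range, is maximized at $\ell = 2d$, yielding the stated numerical constant (which is slightly below $1$, so genuine shrinkage). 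I expect the main obstacle to be the containment step --- getting the quadratic-form inequality to work out with the origin-centered, axis-aligned constraint, since the generic minimum-volume-enclosing-ellipsoid-of-a-half-ellipsoid argument does not preserve those structural constraints; the trick is that because $\vu \ge 0$ and we restrict to the positive orthant, the relevant one-parameter family $\lambda\va + (1-\lambda)\vu^2$ stays diagonal and nonnegative, and the containment reduces to the scalar inequality $t^2 \le \lambda s + (1-\lambda)r^2$ holding whenever $s \le 1$, $r \le 1$ for the appropriate $s,r,t$ --- which I would verify by a short convexity argument coordinate-by-coordinate. The volume computation after that is routine AM--GM bookkeeping.
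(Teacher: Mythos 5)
Your containment step has a genuine gap that you then paper over with a hand-wave. You correctly note that it would suffice to show $\lin{\vu,\vq}^2 \geq \lin{\vq,\Diag(\vu^2)\vq}$ for $\vq$ in the intersection, but then dismiss this as ``false in general'' and retreat to an unresolved claim that the containment ``reduces to a scalar inequality $t^2 \leq \lambda s + (1-\lambda)r^2$ ... coordinate-by-coordinate.'' That retreat does not work: the constraint $\lin{\vu,\vq}\leq 1$ couples all coordinates, so there is no coordinate-by-coordinate reduction. But the inequality you abandoned is in fact \emph{true} in the setting of this lemma: $\setE(\va)$ lives in $\Rnn^d$ and $\vu \geq 0$ from \cref{lem:stronger-hp}, so with $a_i := \vu[i]\vq[i] \geq 0$ we have $\lin{\vq,\Diag(\vu^2)\vq} = \sum_i a_i^2 \leq \bigl(\sum_i a_i\bigr)^2 = \lin{\vu,\vq}^2 \leq 1$, and then $\lin{\vq,\Diag(\va^+)\vq} = \lambda\lin{\vq,\mA\vq} + (1-\lambda)\lin{\vq,\Diag(\vu^2)\vq} \leq \lambda + (1-\lambda) = 1$. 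So the direct route closes with one line once you use the nonnegativity you already invoked elsewhere; you should not have discarded it.

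With that fixed, your overall plan is correct and takes a genuinely different route from the paper. The paper first proves a version of the update for \emph{general} (non-axis-aligned) rank-one corrections $\lambda\mA + (1-\lambda)\vu\vu^\transp$ via the matrix determinant lemma, then uses a symmetrization argument (averaging over sign flips $\Diag(\vs)$, $\vs\in\{\pm1\}^d$, together with concavity of $\log\det$) to show that passing to the diagonal $\Diag(\lambda\mA + (1-\lambda)\vu\vu^\transp)$ can only shrink the volume while preserving containment of the axis-symmetrized intersection. You instead stay in the diagonal world throughout: the volume ratio is the explicit product $\prod_i(\lambda + (1-\lambda)\vu[i]^2/\va[i])^{-1/2}$, and you bound it by minimizing the concave sum $\sum_i\log(\lambda+(1-\lambda)w_i)$ over the simplex $\{w\geq 0 : \sum_i w_i = \ell\}$ at an extreme point. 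This yields exactly the same expression $(\lambda+(1-\lambda)\ell)^{-1/2}\lambda^{-(d-1)/2}$ that the paper obtains from the matrix determinant lemma, so the subsequent numerical bound ($\ell\geq 2d$ from Cauchy--Schwarz with the deep-point hypothesis, then the $(1+x)\leq e^x$ estimate) goes through identically. Your approach is shorter and avoids the paper's auxiliary Theorem on general ellipsoids and the symmetrization lemma; the paper's is more modular and exposes the connection to \citet{GoemansHIM09a}. One small caveat: the stated $\lambda$ is not what minimizes the volume of the \emph{diagonal} ellipsoid (the paper explicitly notes this and refines $\lambda$ numerically), so you should not claim to ``derive it by setting the derivative to zero'' for the diagonal family; it is simply a valid choice for which the bound goes through.
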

\vspace{-.5em}
\begin{proof}[Proof idea.]
The new ellipsoid in~\eqref{eq:new-ellipsoid}
is a convex combination between $\setE(\va)$
and the minimum volume ellipsoid containing the set 
\(\smash{\setst{\vp \in \R^d}{
\lin{\vu, |\vp|}\leq 1}}\) where $|\vp|$ is the element-wise absolute value of $\vp$. 
The choice of $\lambda$ in~\eqref{eq:new-ellipsoid}
is not optimal, but suffices to guarantee progress
as long as $\norm{\vp}_{\mA}$ is small.
A similar approach was used by \citet{GoemansHIM09a}
to approximate submodular functions, although they consider
the polar problem of finding a maximum-volume enclosed ellipsoid. 
The full proof and discussion on the connections to the polar problem 
are deferred to \Cref{apx:set-ellipsoid}.
\end{proof}
\vspace{-1.0em}
To improve the cuts, 
we can refine the estimate of $\lambda$ in \cref{lem:lemlem}
by minimizing the volume numerically. 
We include this modification, 
detailed in \cref{apx:refinement},
in our experiments in \cref{sec:experiments}.

{\bf Overall guarantees.}
We can now define the two subroutines for the ellipsoid method, 
and obtain the main result that we stated informally in \Cref{thm:thm1}, 
by combining the guarantees of the ellipsoid approach 
with the convergence result of \cref{prop:convergence}.
\begin{theorem}
  \label{thm:main_thm_ellipsoid}
  Consider the multidimensional backtracking from \Cref{fig:pseudocode}
  initialized with the set \(\setS_0 = \setst{\Diag(\vp)}{\vp \in
  \setE(\va_0)}\) containing $\mPstar$,
  given by some scaling $\alpha_0 > 0$ of the uniform vector, $\va_0 = \alpha_0 \vone$.
  For $\setS_t$, let $\mA_t = \Diag(\va_t)$.
  Define the subroutines 
  \aligns{
    \mPt = \CANDIDATE(\setS_t, \gamma, \vx_t) 
    \coloneqq
    \gamma \tfrac{\mA_t^{-1} \nabla f(\vx_t)^2}{\norm{\nabla f(\vx_t)^2}_{\mA_t^{-1}}},
    \,\,
    \CUT(\setS_t, \mPt) 
    \coloneqq 
    \setst{\Diag(\vp)}{\vp \in \setE(\va^+(\va_t, \vu_t))},
    \label{eq:ellipsoid_subroutines}
  }
  where \(\vu_t\) is the vector 
  given by \Cref{lem:stronger-hp}
  when
  $\mPt$ fails the Armijo condition at \(\vx_t\),
  and $\va^+$ is computed as in~\eqref{eq:new-ellipsoid}.
  If \(\gamma = \nicefrac{1}{\sqrt{2d}}\), then:
  (a) $\mPstar\in\setS_t$ for all $t$, 
  (b) the candidate preconditioners
  \(\mPt\) are \(\nicefrac{1}{\sqrt{2d}}\)-competitive in $\setS_t$,
  and (c) \(\CUT\) is called no more than \(12 d \log(L/\alpha_0)\) times.
\end{theorem}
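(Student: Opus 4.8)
The plan is to establish the three claims (a)--(c) in turn, paralleling the proof of \Cref{thm:box} but with the ellipsoid lemma \Cref{lem:lemlem} in place of the minimum-volume-box bound. Everything hinges on one elementary identity for the candidate rule in \eqref{eq:ellipsoid_subroutines}: writing $\vg \coloneqq \nabla f(\vx_t)$, $\vg^2 \coloneqq \vg \odot \vg$, and $\mA_t = \Diag(\va_t)$, the candidate $\vp_t = \gamma\,\mA_t^{-1}\vg^2 / \norm{\vg^2}_{\mA_t^{-1}}$ satisfies $\norm{\vp_t}_{\mA_t} = \gamma$ by a one-line computation (the unscaled maximizer in \eqref{eq:max_over_ellipsoid} lies on the boundary $\norm{\cdot}_{\mA_t}=1$). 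Since also $\vp_t \ge 0$, this already gives $\vp_t \in \setE(\va_t)$, hence $\mPt \in \setS_t$; and with $\gamma = \nicefrac{1}{\sqrt{2d}}$ the candidate sits exactly at the depth $\norm{\vp_t}_{\mA_t} \le \nicefrac{1}{\sqrt{2d}}$ demanded by \Cref{lem:lemlem}.

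For (a), I argue by induction on $t$, the base case $\mPstar \in \setS_0$ being a hypothesis. If $(\vx_t,\mPt)$ satisfies \eqref{eq:sufficient-progress} then $\setS_{t+1}=\setS_t$; otherwise $\CUT$ is invoked and \Cref{lem:stronger-hp} (via \Cref{lem:starting-hyperplane-main}) yields $\vu_t \ge 0$ with $\setH_{>}(\vu_t)$ containing only invalid preconditioners. But $\mPstar$ is valid: the upper inequality $\nabla^2 f(\vx) \preceq \mPstar^{-1}$ in \eqref{eq:optimal-preconditioner-def} is exactly $\mPstar^{\nicefrac{1}{2}}\nabla^2 f(\vx)\mPstar^{\nicefrac{1}{2}} \preceq \mI$ (conjugate by the positive definite $\mPstar^{\nicefrac{1}{2}}$). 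Hence $\mPstar \in \setH_{\leq}(\vu_t)$, so $\mPstar \in \setE(\va_t) \cap \setH_{\leq}(\vu_t) \subseteq \setE(\va^+(\va_t,\vu_t)) = \setS_{t+1}$ by the containment statement of \Cref{lem:lemlem}, whose depth hypothesis we checked above. Claim (b) is then immediate from the fact that $\vp \mapsto \norm{\nabla f(\vx_t)}_{\Diag(\vp)}^2 = \lin{\vp,\vg^2}$ is \emph{linear}: by construction $\vp_t = \gamma\,\hat\vp$ with $\hat\vp = \argmax_{\vp \in \setE(\va_t)} \lin{\vp,\vg^2}$, so for every $\mQ = \Diag(\vq) \in \setS_t$ we get $\norm{\nabla f(\vx_t)}_{\mPt}^2 = \gamma\,\lin{\hat\vp,\vg^2} = \gamma\max_{\vp\in\setE(\va_t)}\lin{\vp,\vg^2} \ge \gamma\,\lin{\vq,\vg^2} = \gamma\,\norm{\nabla f(\vx_t)}_{\mQ}^2$, which is $\nicefrac{1}{\sqrt{2d}}$-competitiveness in the sense of \Cref{def:competitive} since we already know $\mPt \in \setS_t$.

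Claim (c) is the volume argument. Every call to $\CUT$ replaces $\setS_t = \setE(\va_t)$ by $\setE(\va^+(\va_t,\vu_t))$, which by \Cref{lem:lemlem} has $\Vol(\setS_{t+1}) \le c\,\Vol(\setS_t)$ with $c = \nicefrac{\sqrt[4]{e}}{\sqrt{2}} \approx 0.91$; so after $k$ cuts $\Vol(\setS_t) \le c^{k}\,\Vol(\setS_0)$. For the matching lower bound, note that a valid preconditioner lying in $\setS_0$ is never discarded, since the half-spaces $\setH_{>}(\vu_t)$ only ever remove invalid ones; hence every $\setS_t$ contains a fixed Euclidean ball of valid preconditioners. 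Concretely, $0 \preceq \mP \preceq \tfrac{1}{L}\mI$ implies $\mP$ valid (then $\mP^{\nicefrac{1}{2}}\nabla^2 f(\vx)\mP^{\nicefrac{1}{2}} \preceq L\mP \preceq \mI$), so the ball of radius $\tfrac{1}{2L}$ about $\tfrac{1}{2L}\vone$ is valid, and it lies in $\setS_0$ because $\setS_0 = \setE(\va_0)$ is taken large enough to contain $\mPstar$ and, more to the point, all valid preconditioners (which sit inside $\setB(\tfrac{1}{\mu}\vone)$). Thus $\Vol(\setS_t)$ is of order $L^{-d}$ while $\Vol(\setS_0)$ is of order $\alpha_0^{-d/2}$; solving $c^{k} \le \Vol(\setS_t)/\Vol(\setS_0)$ for $k$ gives $k = O\!\big(d\log(L/\sqrt{\alpha_0})\big)$, and carrying the deliberately loose constants ($1/\log(1/c) \approx 10.6$, together with the $2L$-vs-$L$ and $\sqrt{\alpha_0}$-vs-$\alpha_0$ slack) yields the stated bound of $12\,d\log(L/\alpha_0)$ calls to $\CUT$.

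The main obstacle lies entirely in (c): claims (a) and (b) are short consequences of \Cref{lem:lemlem}, \Cref{lem:stronger-hp}, and the linearity of $\vp \mapsto \norm{\nabla f(\vx_t)}_{\Diag(\vp)}^2$, but (c) forces one to pin down a concrete ball of valid preconditioners that provably stays inside every $\setS_t$ and then to push the volume ratio through the logarithm with the correct constants — the same bookkeeping that appears, in easier form, in \Cref{thm:box}. A secondary point to be careful about is checking the regularity hypotheses of \Cref{lem:lemlem} at each step (that the generated $\va^+$ stays strictly positive and that the half-space used is exactly the one from \Cref{lem:stronger-hp} evaluated at the queried $\vp_t$).
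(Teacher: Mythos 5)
Parts (a) and (b) of your argument are correct and match the paper's approach: the normalization $\norm{\vp_t}_{\mA_t} = \gamma$ is the right computation, $\mPstar$ survives every cut because it is valid, and linearity of $\vp \mapsto \lin{\vp, \vg^2}$ immediately gives the competitive ratio.

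Part (c) has a genuine gap. You need a lower bound on $\Vol(\setS_t)$, and for that you claim the ball of radius $\tfrac{1}{2L}$ about $\tfrac{1}{2L}\vone$ "lies in $\setS_0$ because $\setS_0 = \setE(\va_0)$ is taken large enough to contain $\mPstar$ and, more to the point, all valid preconditioners." But the theorem's hypothesis is only $\mPstar \in \setS_0$, and this does \emph{not} imply $\setB(\tfrac{1}{L}\vone) \subseteq \setE(\va_0)$. Indeed, $\mPstar \in \setE(\alpha_0 \vone)$ means $\alpha_0 \norm{\diag(\mPstar)}^2 \leq 1$; combined with $\diag(\mPstar) \geq \tfrac{1}{\kappa_* L}\vone$ this gives only $\alpha_0 \leq (\kappa_* L)^2/d$, whereas containing $\tfrac{1}{L}\vone$ requires $\alpha_0 \leq L^2/d$. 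These differ by a factor $\kappa_*^2$, so for badly conditioned problems the ellipsoid may genuinely not contain the box $\setB(\tfrac{1}{L}\vone)$. The paper resolves this with a case split you omitted: if $\tfrac{1}{L}\vone \notin \setE(\alpha_0 \vone)$ (i.e.\ $\alpha_0 d > L^2$), then the initial ellipsoid is so small that every candidate $\vp_0$ produced by $\CANDIDATE$ satisfies $\alpha_0\norm{\vp_0}^2 = \tfrac{1}{2d} < \tfrac{\alpha_0}{2L^2}$, hence $\vp_0 \leq \tfrac{1}{L}\vone$ entrywise and $\mP_0$ is already valid, so $\CUT$ is never called and the bound holds trivially; otherwise $\setB(\tfrac{1}{L}\vone) \subseteq \setE(\va_0)$ does hold, and since valid preconditioners are never cut, $\Vol(\setE(\va_t)) \geq \Vol(\setE((L^2/d)\vone))$ for all $t$, from which the stated $12 d \log(L/\alpha_0)$ follows via $\log(1/c) \geq \tfrac{1}{12}$. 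Without this case split your volume lower bound does not follow from the stated hypotheses.
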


\section{Experiments}
\label{sec:experiments}

\begin{figure}
  \centering
  \includegraphics{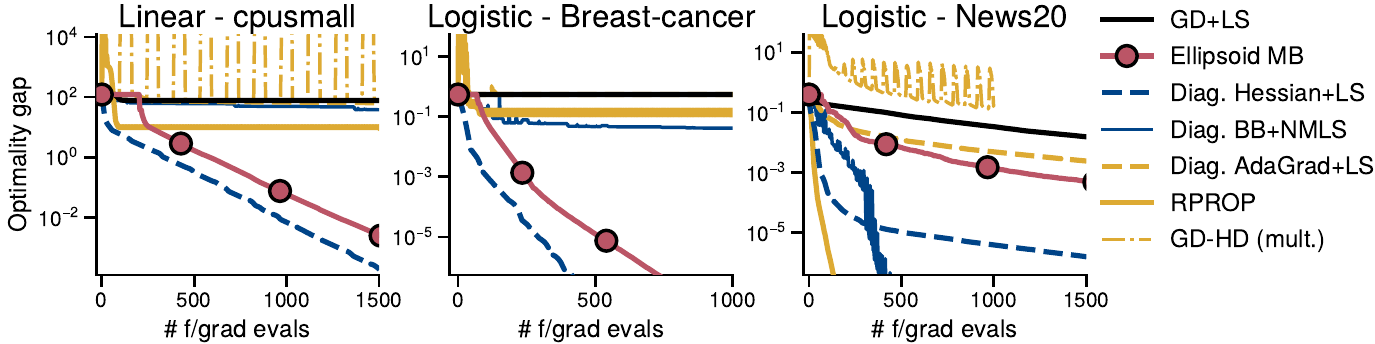}
  \caption{ 
  {\bf Multidimensional backtracking finds a good preconditioner, when there is one.
  } 
  Experiments on regularized linear and logistic regression on  
  small but ill-conditioned datasets, 
  cpusmall and  breast-cancer (left, middle),
  and the large dataset News20 (right, \(d \approx 10^6\)).
  Methods used: Gradient Descent (GD)
  Multidimensional Backtracking (MB) with ellipsoids, 
  diagonal Hessian, 
  diagonal BB, 
  and diagonal AdaGrad---%
  all of which use a line-search (+LS)---
  RPROP, 
  and GD with hypergradient-tuned step-size (GD-HD) using the multiplicative update.
  Details in \cref{apx:experimental-details}.
}
\label{fig:exps}
\end{figure}

To illustrate that  multidimensional backtracking
finds good preconditioners 
and improves over gradient descent on ill-conditioned problems
even when accounting for the cost of backtracking,
we run experiments on small but very ill-conditioned 
and large ($d\approx 10^6$) problems.

As examples of adaptive gain and hypergradient methods,
we include RPROP \citep{riedmiller1993rprop} 
and GD with a hypergradient-tuned step-size (GD-HD, \citealt{baydin2018online} with the multiplicative update).
As examples of approximate second-order methods,
we take diagonal BB %
\citep{park2020variable} 
and preconditioned GD using the diagonal of the Hessian. 
We use default parameters, 
except for the hypergradient method GD-HD,
where we use $10^{-10}$ as the initial step-size 
instead of $10^{-3}$ to avoid immediate divergence. 
We include AdaGrad (diagonal), but augment it with a line-search 
as suggested by \citet{vaswani2020adaptive},
to make it competitive in the deterministic setting.

{\bf Line-searches and forward steps.}
For all methods that use a line-search, we include a \emph{forward} step, 
a common heuristic in line-search procedures
to allow for larger step-sizes when possible, although it can increase the number of backtracking steps.
When a step-size or preconditioner is accepted, 
we increase the size of the set,
allowing for larger (scalar or per-coordinate) step-sizes by a factor of $1.1$.
We measure performance per function and gradient evaluations to capture the cost of backtracking.

{\bf On a small but extremely ill-conditioned problems},
our method is the only one that gets remotely close to being competitive with 
preconditioning with the diagonal Hessian---while only using first-order information.
The diagonal Hessian is very close to the optimal preconditioner for those problems.
On the cpusmall dataset, it reduces the condition number 
from $\kappa \approx 5\cdot 10^{13}$ to $\approx 300$,
while $\kappa_* \approx 150$.
All other methods struggle to make progress
and stall before a reasonable solution is achieved,
indicating they are not competitive with the optimal preconditioner.

{\bf On large regularized logistic regression on News20 ($d \approx 10^6$)},
gradient descent performs relatively better, 
suggesting the problem is less ill-conditioned to begin with 
(the regularized data matrix has condition number $\kappa \approx 10^4$).
Despite the bound of $O(d)$ backtracking steps, 
our methods finds a reasonable preconditioner within 100 gradient evaluations.
Despite the high dimensionality, it improves over gradient descent 
when measured in number of oracle calls.

Using plain gradient updates on the hyperparameters in GD-HD 
leads to unstable behavior, but diagonal BB and even RPROP,
perform remarkably well on some problems%
---even outperforming preconditioning with the diagonal Hessian, which uses second-order information. 
However, they fail on other ill-conditioned problems, 
even when a good diagonal preconditioner exists.
This pattern holds across other problems, as shown in \cref{apx:exp-additional}.
Multidimensional backtracking demonstrates robust performance across problems, 
a clear advantage of having worst-case guarantees.

\section{Conclusion}
We designed \emph{multidimensional backtracking}, 
an efficient algorithm to automatically find diagonal preconditioners 
that are competitive with the optimal diagonal preconditioner. 
Our work provides a definition of adaptive step-sizes
that is complementary to the online learning definition.
While online learning focuses on the adversarial or highly stochastic setting, 
we define and show how to %
find optimal per-coordinate step-sizes
in the deterministic smooth convex setting. 
We show it is possible to build provably robust methods 
to tune a preconditioner using hypergradients.
While our specific implementation uses cutting-planes,
the general approach may lead to alternative algorithms, 
that possibly tune other hyperparameters,
with similar guarantees.

The main limitation of our approach 
is its reliance on the convex deterministic setting.
The results might transfer to the stochastic overparametrized regime 
using the approach of \citet{vaswani2019painless},
but the non-convex case seems challenging.
It is not clear how to get reliable information 
from a cutting-plane perspective using hypergradients without convexity.
As the first method 
to provably find competitive preconditioners,
there are likely modifications that lead to practical improvements
while preserving the theoretical guarantees.
Possible ideas to improve practical performances include
better ways to perform forward steps, 
using hypergradient information from accepted steps (which are currently ignored),
or considering alternative structures to diagonal preconditioners.

\clearpage
\begin{ack}
We thank Aaron Mishkin for helpful discussions in the early stages of this work,
and Curtis Fox and Si Yi (Cathy) Meng for providing comments on an early version of the manuscript.
This research was partially supported by the Canada CIFAR AI Chair Program, 
the Natural Sciences and Engineering Research Council of Canada (NSERC) Discovery Grants RGPIN-2022-03669, 
\end{ack}

\section*{References}
\AtNextBibliography{\normalsize}
\printbibliography[heading=none]

\clearpage
\appendix

\vbox{
  {\hrule height 4pt \vskip 0.25in \vskip -\parskip}
  \centering
  {\LARGE\bf Supplementary Material\par}
  {\vskip 0.29in \vskip -\parskip \hrule height 1pt \vskip 0.09in}
}

\newcommand\invisiblepart[1]{%
  \refstepcounter{part}%
  \addcontentsline{toc}{part}{\protect\numberline{\thepart}#1}%
}

\invisiblepart{Appendix}
\setcounter{tocdepth}{2}
\localtableofcontents

\vspace{2em}

Code available at 
\hfill
\href{https://github.com/fKunstner/multidimensional-backtracking}{\tt https://github.com/fKunstner/multidimensional-backtracking}

\vfill

\section{Full pseudocode of the algorithms}
\label{apx:pseudocode}

We first give with a generic version using the subroutines 
$\INITSET$, $\CANDIDATE$ and $\CUT$, 
to be specialized for the 
backtracking line-search (\cref{alg:subroutines-linesearch}), 
multidimensional backtracking using boxes (\cref{alg:subroutines-box}),
and ellipsoids (\cref{alg:subroutines-ellipsoid}).
The generic pseudocode is written in terms of preconditioners, 
but also applies to the step-size version, 
which we can consider as looking for a preconditioner 
constrained to isotropic diagonal preconditioners, that is, preconditioners in the set $\{\alpha \mI : \alpha \in \Rnn\}$.

Although we write the pseudocode maintaining at each iteration an
abstract set of preconditioners $\setS$, the only information the
algorithm needs to maintain on each iteration for the implementation in
the different cases is 
\begin{itemize}[leftmargin=1em]
  \item {\bf For the line-search:}\\
  the current maximum step-size $\alpha_{\max}$ 
  defining the interval of valid step-sizes,
  $[0,\alpha_{\max}]$ 
  such that the set of preconditioners 
  is $\setS = \{\alpha \mI : \alpha \in [0,\alpha_{\max}]\}$;
  \item {\bf For multidimensional backtracking with boxes:}\\
  the vector $\vb$ defining the maximum corner of the box 
  $\setB(\vb) = \{\vp \in \Rnn^d : \vp \leq \vb\}$
  used to define the candidate diagonals preconditioners 
  in the set $\setS = \{\Diag(\vp) : \vp \in \setB(\vb)\}$;
  \item {\bf For multidimensional backtracking with ellipsoids:}\\
  the vector $\va$ defining the axis-aligned ellipsoid
  $\setE(\va) = \{\vp \in \Rnn^d : \lin{\vp, \Diag(\va) \vp}\leq 1\}$
  used to define the candidate diagonal preconditioners 
  in the set $\setS = \{\Diag(\vp) : \vp \in \setE(\va)\}$. 
\end{itemize}

The pseudocode in \cref{alg:full-pseudocode}
updates $(\vx_t, \setS_t)$ to $(\vx_{t+1}, \setS_{t+1})$ at each iteration,
and ensures that 
either the function value decreases, $f(\vx_{t+1}) < f(\vx_t)$,
or the volume decreases, $\Vol(\setS_{t+1}) < \Vol(\setS_t)$.

We give an alternative pseudocode %
in \cref{alg:full-pseudocode-while-loop}, 
which defines iterations as updates to the iterates $\vx_t$ that decrease the function value,
and uses a \texttt{while}-loop to backtrack. 
Since it more closely resemble standard ways backtracking line-search is described, 
some reader may find it easier to understand. 
We stress, however, that this is still the same algorithm as \cref{alg:full-pseudocode} but written differently.

The pseudocode
in Figures \ref{alg:full-pseudocode}--\ref{alg:subroutines-ellipsoid},
are expressed in a modular form 
to highlight how the algorithm works and its similarity to a line-search.
In \cref{apx:implementable}, 
we give a more directly implementable pseudocode 
of multidimensional backtracking in both box and ellipsoid variants solely relying on vector notation.

\clearpage

~

\vfill

\begin{figure}[h]
  \caption{
    {\bf Generic pseudocode for the line-search or multidimensional backtracking.}
    uses the subroutines $\INITSET$, $\CANDIDATE$, $\CUT$ defined in the later sections.
  }
  \label{alg:full-pseudocode}
  \fbox{\begin{minipage}{\textwidth}
  \begin{alglisting}
    \item \textbf{Backtracking Preconditioner Search with Sets}
    \item \hrule
    \item {\bf Input:} 
    \begin{apxlisting}
      \item A starting point \(\vx_0 \in \R^d\);
      \item A backtracking coefficient \(\gamma \in [0,1]\);
      \item A scalar $c_0 > 0$ larger than the optimal preconditioner, i.e., such that $\mPstar \preceq c_0 \mI$.
    \end{apxlisting}
    \item
    \item $\setS_0 = \INITSET(c_0)$
    \item
    \item Iterate for $t$ in $0, 1, ..., T-1$
    \begin{apxlisting}
      \item $\mPt = \CANDIDATE(\setS_t, \gamma, \nabla f(\vx_t))$
      \item If 
      $f(\vx_t - \mPt \nabla f(\vx_t)) \leq f(\vx_t) - \frac{1}{2}\norm*{\nabla f(\vx_t)}^2_{\mPt}$
      \codecomment{Armijo condition \cref{eq:sufficient-progress}}
      \begin{apxlisting}
        \item $(\vx_{t+1}, \setS_{t+1}) = (\vx_t - \mPt \nabla f(\vx_t), \setS_t)$
      \end{apxlisting}
      \item Otherwise, 
      \begin{apxlisting}
        \item $(\vx_{t+1}, \setS_{t+1}) = (\vx_t, \CUT(\setS_t, \vx_t, \mPt))$
      \end{apxlisting}
    \end{apxlisting}
    \item
    \item {\bf Output: } $\vx_T$
  \end{alglisting}
  \end{minipage}}
\end{figure}

\vfill

\begin{figure}[h]
  \caption{
    {\bf Alternative pseudocode for the line-search or multidimensional backtracking.}
    Uses a while-loop for backtracking and only updates the iterates $\vx_t$ 
    when they lead to progress.
  }
  \label{alg:full-pseudocode-while-loop}
  \fbox{\begin{minipage}{\textwidth}
  \begin{alglisting}
    \item \textbf{Backtracking Preconditioner Search with Sets -- \texttt{while}-loop variant}
    \item \hrule
    \item {\bf Input:} 
    \begin{apxlisting}
      \item A starting point \(\vx_0 \in \R^d\);
      \item A backtracking coefficient \(\gamma \in [0,1]\);
      \item A scalar $c_0 > 0$ larger than the best preconditioner, that is,
      $\mPstar \preceq c_0 \mI$.
    \end{apxlisting}
    \item
    \item Initialize the set $\setS = \INITSET(c_0)$
    \item Iterate for $t$ in $0, 1, ..., T-1$
    \begin{apxlisting}
      \item $\mPt \leftarrow \CANDIDATE(\setS_b, \gamma, \nabla f(\vx_t))$
      \item While 
      $f(\vx_t - \mPt \nabla f(\vx_t)) \leq f(\vx_t) - \frac{1}{2}\norm*{\nabla f(\vx_t)}^2_{\mPt}$
      \codecomment{Armijo condition \cref{eq:sufficient-progress}}
      \begin{apxlisting}
        \item $\setS \leftarrow \CUT(\setS, \vx_t, \mPt)$
        \item $\mPt \leftarrow \CANDIDATE(\setS, \gamma, \nabla f(\vx_t))$
      \end{apxlisting}
      \item $\vx_{t+1} = \vx_t - \mPt \nabla f(\vx_t)$
    \end{apxlisting}
    \item
    \item {\bf Output: } $\vx_T$
  \end{alglisting}
  \end{minipage}}
\end{figure}  

\vfill

\clearpage
\subsection{Subroutines for standard backtracking line-search}

Implementation of the subroutines for the standard backtracking line-search.
Although written in terms of sets, 
the algorithm only needs to maintain the maximum step-size 
in the interval $[0, \alpha_{\max}]$ at each iteration.
The corresponding preconditioners 
are the matrices 
$\setS = \{\alpha \mI : \alpha \in [0,\alpha_{\max}]\}$.

\vfill

\begin{figure}[h]
  \caption{{\bf Specialization of the subroutines for the backtracking line-search}}
  \label{alg:subroutines-linesearch}
  \fbox{\begin{minipage}{\textwidth}
    \begin{alglisting}
      \item \(\INITSET(c_0)\)
      \item \hrule
      \item {\bf Input:}
      \begin{apxlisting}
        \item A scaling $c_0$ larger than the optimum step-size, that is, such that $\nicefrac{1}{L} \leq c_0$.
      \end{apxlisting}
      \item
      \item {\bf Output:} Set of preconditioners $\setS = \{\alpha \mI : \alpha \in [0, c_0]\}$.
    \end{alglisting}
    \end{minipage}
  }
  ~\\~\\
  \fbox{\begin{minipage}{\textwidth}
  \begin{alglisting}
    \item \(\CANDIDATE(\setS, \gamma, \vx)\)
    \item \hrule
    \item {\bf Input:}
    \begin{apxlisting}
      \item Set of scalar preconditioners $\setS = \{\alpha \mI : \alpha \in [0, \alpha_{\max}]\}$;
      \item Backtracking coefficient \(\gamma \in [0,1]\);
      \item Current iterate \(\vx \in \R^d\). \codecomment{Not used for the step-size version.}
    \end{apxlisting}
    \item
    \item {\bf Output: } Preconditioner $\gamma \alpha_{\max} \mI$
    \end{alglisting}
   \end{minipage}
 }
 ~\\~\\
 \fbox{\begin{minipage}{\textwidth}
  \begin{alglisting}
    \item \(\CUT(\setS, \vx, \mP)\)
    \item \hrule
    \item {\bf Input:}
    \begin{apxlisting}
      \item Set of scalar preconditioners $\setS = \{\alpha \mI : \alpha \in [0, \alpha_{\max}]\}$;
      \item Current iterate \(\vx \in \R^d\); \codecomment{Not used for the step-size version.}
      \item Preconditioner $\mP = \alpha_{\text{bad}} \mI$ that failed the Armijo condition at \(\vx\).
    \end{apxlisting}
    \item
    \item {\bf Output: } Set of scalar preconditioners with reduced interval, $\setS = \{\alpha \mI : \alpha \in [0, \alpha_{\text{bad}}]\}$
  \end{alglisting}
  \end{minipage}
}
\end{figure}

\vfill
\subsection{Separating hyperplanes used by multidimensional backtracking}

Both versions of multidimensional backtracking 
need a direction to update the set of preconditioners
in the $\CUT$ subroutine.
We define the subroutine $\SEPHP$ in \cref{alg:hp}.
The description of the separating hyperplane 
and their properties 
can be found in \cref{sec:direction} and \cref{apx:hyperplanes}.

\vfill

\begin{figure}[h]
  \caption{
  {\bf Separating hyperplane used by both variants of multidimensional backtracking.}
  }
  \label{alg:hp}
  \fbox{\begin{minipage}{\textwidth}
    \begin{alglisting}
      \item $\SEPHP(\vx, \mP)$ for diagonal preconditioners
      \item \hrule
      \item {\bf Input:} 
      \begin{apxlisting}
      \item Current iterate $\vx \in \R^d$;
      \item Diagonal preconditioner $\mP = \Diag(\vp)$ that failed the Armijo condition at \(\vx\).
      \end{apxlisting}
      \item 
      \item $\begin{aligned}
        \vx^+ &= \vx - \mP \nabla f(\vx)\\
         \vg^{\hphantom{+}} &= \nabla f(\vx)\\
         \vg^+ &= \nabla f(\vx^+)\\
        \vv &= 
        \frac{
          \paren{\tfrac{1}{2}\vg - \vg^+} \odot \vg
        }{
          f(\vx) - f(\vx^+) - \lin{\vg, \mP \vg^+}
        }
        \quad\quad\quad\quad\quad\quad\quad\,
        \text{\codecomment{Separating hyperplane from \Cref{lem:starting-hyperplane-main}}}
        \end{aligned}$
      \item
      \item {\bf Output:} $\vu = \max\{\vv, 0\}$ element-wise
      \codecomment{Stronger hyperplane from \Cref{lem:stronger-hp}}
    \end{alglisting}
    \end{minipage}
  }
\end{figure}

\clearpage
\subsection{Multidimensional backtracking using boxes}

The implementation of multidimensional backtracking with boxes only
needs to maintain a vector $\vb$, representing the maximum step-size for
each coordinate that has not been ruled out, in the box $\setB(\vb)$.
The associated sets of preconditioners $\setS$ are \aligns{ \setB(\vb) =
\{\vp \in \Rnn^d : \vp \leq \vb\}, && \setS = \{\Diag(\vp) : \vp \in
\setB(\vb)\}. } The description of boxes and the theoretical guarantees
when using them in multidimensional backtracking can be found in
\cref{sec:cutting-plane} and \cref{apx:subsec-set-box}. The subroutines
used by the algorithm with boxes are:
\begingroup\setlength{\parskip}{8pt}
\begin{itemize}[itemsep=8pt,parsep=0pt,topsep=0pt,partopsep=0pt]
\item 
$\INITSET$: initializes $\vb$ to $c_0 \vone$ 
so that the diagonal preconditioner $c_0 \mI$ is in $\setS_0$.
\item 
$\CANDIDATE$: 
backtracks from the largest diagonal in $\setB(\vb)$,
returning $\gamma \Diag(\vb)$.
\item $\SEPHP$: 
computes the vector $\vu$ 
defining the half-space 
of invalid preconditioners $\setH_{>}(\vu)$ 
obtained when the preconditioner $\mP$ fails the Armijo condition at $\vx$ as described in \cref{lem:starting-hyperplane-main} and \cref{lem:stronger-hp}.
\item 
$\CUT$: 
returns the minimum volume box $\setB(\vb^+)$ 
containing the intersection $\setB(\vb) \cap \setH_{\leq}(\vu)$.
\end{itemize}
\endgroup

\vfill

\begin{figure}[h]
\caption{{\bf Specialization of the subroutines for multidimensional backtracking with boxes}}
\label{alg:subroutines-box}
  \fbox{\begin{minipage}{\textwidth}
    \begin{alglisting}
      \item \(\INITSET(c_0)\)
      \item \hrule
      \item {\bf Input:}
      \begin{apxlisting}
        \item 
        A scalar $c_0$ 
        such that $c_0\mI$ is larger than the optimal diagonal preconditioner, i.e.,
        $\mPstar \preceq c_0\mI$.
      \end{apxlisting}
      \item
      \item {\bf Output:} 
      Set $\setS = \{\Diag(\vp) : \vp \in \setB(\vb)\}$
      with $\setB(\vb) = \{\vp \in \Rnn^d : \vp \leq \vb\}$ 
      where $\vb \coloneqq c_0\vone$
    \end{alglisting}
    \end{minipage}
  }
  ~\\~\\
  \fbox{\begin{minipage}{\textwidth}
  \begin{alglisting}
    \item \(\CANDIDATE(\setS, \gamma, \vx)\)
    \item \hrule
    \item {\bf Input:}
    \begin{apxlisting}
      \item Set of preconditioners $\setS = \{\Diag(\vp) : \vp \in \setB(\vb)\}$
      with $\setB(\vb) = \{\vp \in \Rnn^d : \vp \leq \vb\}$; 
      \item Backtracking coefficient \(\gamma \in [0,1]\);
      \item Current iterate \(\vx \in \R^d\). \codecomment{Not used for the box version.}
    \end{apxlisting}
    \item
    \item {\bf Output: } Preconditioner $\gamma \Diag(\vb)$
  \end{alglisting}
  \end{minipage}
}
~\\~\\
\fbox{\begin{minipage}{\textwidth}
  \begin{alglisting}
    \item \(\CUT(\setS, \vx, \mP)\)
    \item \hrule
    \item {\bf Input:}
    \begin{apxlisting}
      \item Set of preconditioners $\setS = \{\Diag(\vp) : \vp \in \setB(\vb)\}$
      with $\setB(\vb) = \{\vp \in \Rnn^d : \vp \leq \vb\}$; 
      \item Backtracking coefficient \(\gamma \in [0,1]\);
      \item Preconditioner $\mP_{\text{bad}}$ that failed the Armijo
      condition at \(\vx\).
    \end{apxlisting}
    \item
    \item $\vu = \SEPHP(\vx_t, \mP_{\text{bad}})$
    \item $\vb^+ = \max\{\vb, 1/\vu\}$
    \codecomment{Minimum volume box $\setB(\vb^+)$ containing $\setB(\vb) \cap \setH_{\leq}(\vu)$}
    \item
    \item {\bf Output: } Set of diagonal preconditioners $\setS = \{\Diag(\vp) : \vp \in \setB(\vb^+)\}$.
  \end{alglisting}
  \end{minipage}
}
\end{figure}

\vfill

\clearpage
\subsection{Multidimensional backtracking using ellipsoids}
\label{apx:pseudocode-ellipsoid}

The implementation only needs to maintain a vector $\va$ representing
the diagonal of the matrix defining the (centered, axis-alligned)
ellipsoid $\setE(\va)$ and the associated set of preconditioners $\setS$
given by
\aligns{ \setE(\va) = \{\vp \in \Rnn^d : \lin{\vp, \Diag(\va) \vp} \leq
  1\}, && \setS = \{\Diag(\vp) : \vp \in \setE(\va)\}. } The description
  of the ellipsoids and their properties can be found in
  \cref{sec:cutting-plane} and
  \cref{apx:subsec-set-ellipsoid}. The subroutines used by the algorithm
  with boxes are: \begingroup\setlength{\parskip}{8pt}
\begin{itemize}[itemsep=8pt,parsep=0pt,topsep=0pt,partopsep=0pt]
\item 
$\INITSET$: initializes $\va$ to $(\nicefrac{1}{d c_0^2}) \vone$ 
so that  $c_0 \vone \in \setE(\va)$, implying the diagonal preconditioner \(c_0\mI\) is in $\setS$.
\item 
$\CANDIDATE$: 
backtracks from the diagonal preconditioner
in $\setS$ that maximizes the gradient norm.
Let $\setE(\va)$ be the set of candidate diagonals and
define $\mA = \Diag(\va)$. 
The subroutine returns $\gamma \mP_{\max}$, where 
\aligns{
  \mP_{\max} 
  &\coloneqq 
  \argmax_{\mP \in \setS} \norm{\nabla f(\vx)}^2_{\mP}.
  \intertext{Writing this in terms of the diagonal vector
$\vp_{\max} \coloneqq \diag(\mP_{\max})$ yields }
 \vp_{\max} 
 &=
    \argmax_{\vp \in \setE(\va)} \norm{\nabla f(\vx)}^2_{\Diag(\vp)},
  \\
  &= 
     \argmax_{\vp} \lin{\nabla f(\vx)^2, \vp} : \norm{\vp}_{\mA} \leq 1
  =
     \frac{\mA^{-1} \nabla f(\vx)^2}{\norm{\nabla f(\vx)}_{\mA^{-1}}},
}
where $\nabla f(\vx)^2 = \nabla f(\vx) \odot \nabla f(\vx)$.
\item $\SEPHP$: 
computes the vector $\vu$ 
defining the half-space 
of invalid preconditioners $\setH_{>}(\vu)$ 
obtained when the preconditioner $\mP$ fails the Armijo condition at $\vx$ as described in \cref{lem:starting-hyperplane-main} and \cref{lem:stronger-hp}.
\item 
$\CUT$: 
returns an ellipsoid $\setE(\va^+)$ 
containing the intersection of $\setE(\va) \cap \setH_{\leq}(\vu)$ 
with guaranteed volume decrease from \(\setE(\va)\).
As there is no closed-form solution for the minimum volume ellipsoid, 
we set $\va^+$ as 
a convex combination between the original ellipsoid $\setE(\va)$ 
and the minimum volume axis-aligned ellipsoid containing  
$\setH_{\leq}(\vu)$, given by $\setE(\vu^2)$, that is,
\begin{equation}
  \label{eq:ellip_exp_apx}
  \va^+ \coloneqq \lambda \va + (1-\lambda) \vu^2,
  \quad \text{ where } \quad
  \lambda \coloneqq \frac{\ell}{d}\frac{d-1}{\ell-1}
  \quad \text{ and }\quad
  \ell \coloneqq \norm{\vu}^2_{\mA^{-1}},
\end{equation}
where $\mA \coloneqq \diag(\va)$. Although the above choice of
\(\lambda\) has guaranteed volume decrease, we can find a better value of
$\lambda$ by solving the minimum volume ellipsoid as a function of \(\lambda\) numerically. Namely, approximating 
\aligns{
  \lambda^* \coloneqq \argmin_{0 < \lambda < 1} - \log(\det(\lambda \Diag(\va) + (1-\lambda) \Diag(\vu^2))).`'
}
In our experiments, 
we start with $\lambda$ as in~\eqref{eq:ellip_exp_apx} and, starting from it,
we solve the above minimization problem numerically using
L-BFGS-B \citep{zhu1997lbfgsb} in SciPy \citep{2020SciPy}. This preserves the theoretical guarantee while improving empirical performance.

\end{itemize}
\endgroup

\begin{figure}[h]
\caption{{\bf Specialization of the subroutines for multidimensional backtracking with ellipsoids}}
\label{alg:subroutines-ellipsoid}
  \fbox{\begin{minipage}{\textwidth}
    \begin{alglisting}
      \item \(\INITSET(c_0)\)
      \item \hrule
      \item {\bf Input:}
      \begin{apxlisting}
        \item 
        A scalar $c_0 > 0$ 
        such that $c_0\mI$ is larger than the optimal diagonal preconditioner, i.e.,
        $\mPstar \preceq c_0\mI$.
      \end{apxlisting}
      \item
      \item {\bf Output:} 
      $\setS = \{\Diag(\vp) : \vp \in \setE(\va)\}$
      with $\setE(\va) = \{\vp \in \Rnn^d : \lin{\vp, \Diag(\va) \vp} \leq 1\}$
      for $\va = \frac{1}{d c_0^2}\vone$
    \end{alglisting}
    \end{minipage}
  }
  ~\\~\\
  \fbox{\begin{minipage}{\textwidth}
  \begin{alglisting}
    \item \(\CANDIDATE(\setS, \gamma, \vx)\)
    \item \hrule
    \item {\bf Input:}
    \begin{apxlisting}
      \item A set $\setS = \{\Diag(\vp) : \vp \in \setE(\va)\}$
      where $\setE(\va) = \{\vp \in \Rnn^d : \lin{\vp, \Diag(\va) \vp} \leq 1\}$,
      and $\va \in \Rplus^d$;
      \item Backtracking coefficient \(\gamma \in [0,1]\);
      \item Current iterate \(\vx \in \R^d\).
    \end{apxlisting}
    \item
    \item $\displaystyle \vd = \nabla f(\vx) \circ \nabla f(\vx)$
    \item $\displaystyle
    \vp_{\max} = \frac{\mA^{-1} \vd}{\norm{\vd}_{\mA^{-1}}}$
    \codecomment{Where $\mA = \Diag(\va)$}
    \item 
    \item {\bf Output: } Preconditioner $\gamma \Diag(\vp_{\max})$
  \end{alglisting}
 \end{minipage}
}
~\\~\\
\fbox{\begin{minipage}{\textwidth}
  \begin{alglisting}
    \item \(\CUT(\setS, \vx, \mP)\)
    \item \hrule
    \item {\bf Input:}
    \begin{apxlisting}
      \item A set $\setS = \{\Diag(\vp) : \vp \in \setE(\va)\}$,
      where $\setE(\va) = \{\vp \in \Rnn^d : \lin{\vp, \Diag(\va) \vp} \leq 1\}$,
      and $\va \in \Rplus^d$;
      \item Current iterate \(\vx \in \R^d\);
      \item Preconditioner $\mP_{\text{bad}}$ that failed the Armijo condition at \(\vx\).
    \end{apxlisting}
    \vspace{1em}
    \item $\vu = \SEPHP(\vx_t, \mP_{\text{bad}})$
    \item $\displaystyle \ell = \norm{\vu}^2_{\mA^{-1}}$
    \item $\displaystyle
      \lambda = \tfrac{\ell}{d}\tfrac{d-1}{\ell-1}$
      \hfill 
      $
      \paren{
        \text{or numerically solve } 
        \lambda = \arg\min_{0 < c < 1} - \log(\det(c \Diag(\va) + (1-c) \Diag(\vu^2)))
      }
    $
    \item  $\va^+ = \lambda \va + (1-\lambda) \vu^2$
    \codecomment{Approx. min. volume ellipsoid $\setE(\va^+)$ containing $\setE(\va) \cap \setH_{\leq}(\vu)$}
    \item 
    \item {\bf Output: } The set $\setS = \{\Diag(\vp) : \vp \in \setE(\va^+)\}$
  \end{alglisting}
  \end{minipage}
}
\end{figure}

\clearpage
\subsection{Implementable pseudocode}
\label{apx:implementable}

The pseudocode in Figures
\ref{alg:full-pseudocode}--\ref{alg:subroutines-ellipsoid} are expressed
in a modular form to highlight how the algorithm works and its
similarity to a line-search. In this section, we give a more directly
implementable pseudocode of multidimensional backtracking, in both the
box and ellipsoid variants, using mostly vector notation. Scalar
operations on vectors such as $\vu / \va$, $\sqrt{\vu}$, $\vu^2$ are
understood to be taken element-wise.

\vfill

\begin{figure}[h]
\fbox{\begin{minipage}{\textwidth}
  \begin{alglisting}
    \item {\bf Multidimensional backtracking using boxes} \hfill Direct implementation 
    \item \hrule
    \item {\bf Input:}
    \begin{apxlisting}
      \item Function to optimize $f \colon \R^d \to \R$ ;
      \item Starting point $\vx_0 \in \R^d$;
      \item A scalar for the scale of initial set of preconditioners $c_0 > 0$;
      \item Backtracking coefficient $\gamma < \nicefrac{1}{d}$.
    \end{apxlisting}
    \vspace{1em}
    \item $\vb = c_0 \vone$ \fixedwidthcodecomment{Initialize box}{.35\textwidth}
    \item 
    \item Iterate for $t$ in $0, 1, ...$
    \begin{apxlisting}
      \item
      \item $\vp_t= \gamma \vb$
      \fixedwidthcodecomment{Get candidate preconditioner}{.35\textwidth}
      \item 
      \item $\vg_t = \nabla f(\vx_t)$
      \fixedwidthcodecomment{Get candidate point}{.35\textwidth}
      \item $\vx_{t}^+ = \vx_t - \vp_t \circ \vg_t$
      \item
      \item While $f(\vx_t^+) > f(\vx_t) - \frac{1}{2}\lin{\vg_t^2, \vp_t}$
      \fixedwidthcodecomment{Armijo condition fails}{.35\textwidth}
      \begin{apxlisting}
        \item
        \item $\vg_t^+ = \nabla f(\vx_{t}^+)$
        \fixedwidthcodecomment{Get next gradient to compute}{.35\textwidth}
        \item $\vd_t = \frac{1}{2}(\vg_t - \vg_t^+) \circ \vg_t$
        \fixedwidthcodecomment{the separating hyperplane direction,}{.35\textwidth}
        \item $c_t = f(\vx_t) - f(\vx_{t}^+) - \lin{\vg_t \circ \vp_t, \vg_t^+}$
        \fixedwidthcodecomment{the normalization constant,}{.35\textwidth}
        \item $\vu_t = \max\{\vd_t /c_t, 0\}$ (element-wise)
        \fixedwidthcodecomment{and truncate it}{.35\textwidth}
        \item 
        \item $\vb = \vone / \max\{\vone/\vb, \vu_t\}$ (element-wise)
        \fixedwidthcodecomment{Find new minimum volume box.}{.35\textwidth}
        \item
        \fixedwidthcodecomment{($\infty$-free $\min\{\vb, \vone/\vu\}$)}{.35\textwidth}
        \item 
        \item $\vp_t = \gamma\vb$
        \fixedwidthcodecomment{Pick next candidate preconditioner}{.35\textwidth}
        \item
        \item $\vx_{t}^+ = \vx_t - \vp_t \circ \vg_t$
        \fixedwidthcodecomment{and next candidate point}{.35\textwidth}
      \end{apxlisting}
      \item 
      \item $\vx_{t+1} = \vx_t^+$
      \fixedwidthcodecomment{Accept new point}{.35\textwidth}
    \end{apxlisting}
    \item
    \item {\bf Output: } $\vx_t$
  \end{alglisting}
  \end{minipage}
}
\end{figure}

\vfill

\clearpage

\begin{figure}[p]
\fbox{\begin{minipage}{\textwidth}
  \begin{alglisting}
    \item {\bf Multidimensional backtracking using ellipsoids} \hfill Direct implementation 
    \item \hrule
    \item {\bf Input:}
    \begin{apxlisting}
      \item Function to optimize $f \colon \R^d \to \R$ ;
      \item Starting point $\vx_0 \in \R^d$;
      \item A scalar for the scale of initial set of preconditioners $c_0 > 0$;
      \item Backtracking coefficient $\gamma < \nicefrac{1}{\sqrt{d}}$
    \end{apxlisting}
    \vspace{1em}
    \item $\va = \vone / (d c_0^2)$ \fixedwidthcodecomment{Initialize ellipsoid}{.35\textwidth}
    \item 
    \item Iterate for $t$ in $0, 1, ...$
    \begin{apxlisting}
      \item
      \item $\vg_t = \nabla f(\vx_t)$
      \item $\vp_t = \vg_t^2 / \va$ (element-wise)
      \fixedwidthcodecomment{Get candidate preconditioner}{.35\textwidth}
      \item $\vp_t = \gamma \vp_t / \norm{\vg_t^2 / \sqrt{\va}}$ (element-wise)
      \fixedwidthcodecomment{normalize it}{.35\textwidth}
      \item 
      \item $\vx_{t}^+ = \vx_t - \vp_t \circ \vg_t$
      \fixedwidthcodecomment{Get candidate point}{.35\textwidth}
      \item
      \item While $f(\vx_t^+) > f(\vx_t) - \frac{1}{2}\lin{\vg_t^2, \vp_t}$
      \fixedwidthcodecomment{Armijo condition fails}{.35\textwidth}
      \begin{apxlisting}
        \item
        \item $\vg_t^+ = \nabla f(\vx_{t}^+)$
        \fixedwidthcodecomment{Get next gradient to compute}{.35\textwidth}
        \item $\vd_t = \frac{1}{2}(\vg_t - \vg_t^+) \circ \vg_t$
        \fixedwidthcodecomment{the separating hyperplane direction,}{.35\textwidth}
        \item $c_t = f(\vx_t) - f(\vx_{t}^+) - \lin{\vg_t \circ \vp_t, \vg_t^+}$
        \fixedwidthcodecomment{the normalization constant,}{.35\textwidth}
        \item $\vu_t = \max\{\vd_t /c_t, 0\}$ (element-wise)
        \fixedwidthcodecomment{and truncate it}{.35\textwidth}
        \item 
        \item take $\lambda = \frac{\ell (d-1)}{d (\ell - 1)}$ where $\ell = \lin{\vu^2, 1/\va}$
        \fixedwidthcodecomment{Approx. min. vol. new ellipsoid}{.35\textwidth}
        \item or 
        \fixedwidthcodecomment{}{.35\textwidth}
        \item find $\lambda$ by numerically minimizing $\phi(\lambda)$ where 
        \fixedwidthcodecomment{Find better approximation of min.}{.35\textwidth}
        \item $\quad \phi(\lambda) = - \sum_{i=1}^d \log(\lambda \va[i] + (1-\lambda) \vu_t[i]^2)$
        \fixedwidthcodecomment{of volume of new ellipsoid}{.35\textwidth}
        \item 
        \item $\va = \lambda \va + (1-\lambda) \vu^2$
        \fixedwidthcodecomment{New ellipsoid}{.35\textwidth}
        \item 
        \item $\vp_t = \vg_t^2 / \va$ (element-wise)
        \fixedwidthcodecomment{Get new candidate preconditioner,}{.35\textwidth}
        \item $\vp_t = \gamma \vp_t / \norm{\vg_t^2 / \sqrt{\va}}$ (element-wise)
        \fixedwidthcodecomment{normalized, }{.35\textwidth}
        \item $\vx_{t}^+ = \vx_t - \vp_t \circ \vg_t$
        \fixedwidthcodecomment{and new candidate point}{.35\textwidth}
      \end{apxlisting}
      \item 
      \item $\vx_{t+1} = \vx_t^+$
      \fixedwidthcodecomment{Accept new point}{.35\textwidth}
    \end{apxlisting}
    \item
    \item {\bf Output: } $\vx_t$
  \end{alglisting}
  \end{minipage}
}
\end{figure}

\clearpage

\begin{figure}[h]
\centering
\includegraphics[width=.6\textwidth]{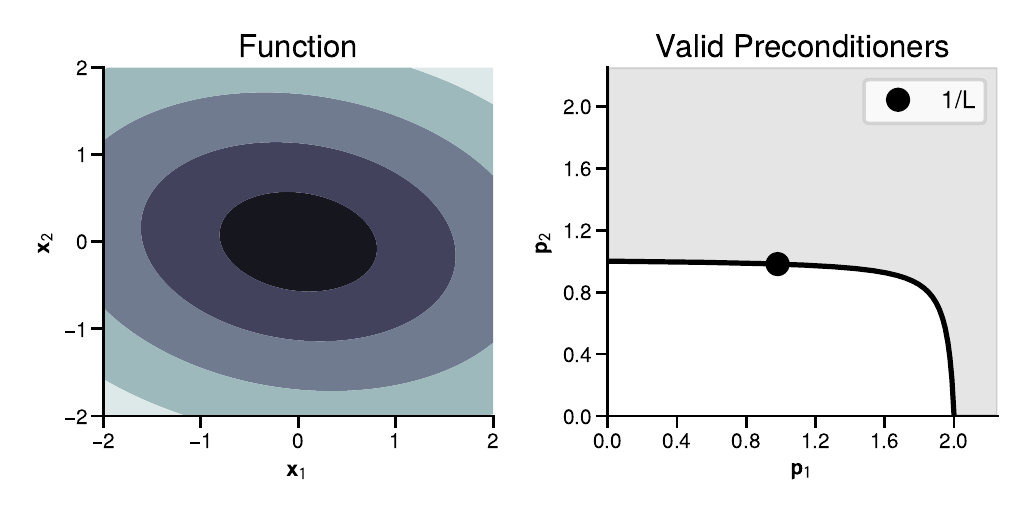}
\vspace{-1em}
\captionsetup{singlelinecheck=off}
\caption[Set of valid diagonal preconditioners for a quadratic]{%
Set of valid diagonal preconditioners (step-sizes $\vp_1$ and $\vp_2$)
for the quadratic in \cref{eq:pareto-1}.
Preconditioned gradient descent can use a larger step-size in the first coordinate.
}
\label{fig:pareto-1}
\end{figure}

\section{Optimal preconditioners, valid preconditioners and competitive ratios}
\label{apx:convergence-opt-p}
\label{apx:without-sc}
\label{apx:smoothness}
\label{apx:competitive}

In \Cref{sec:optimal}, we defined the optimal preconditioner $\mPstar$
as the preconditioner that is the best overall approximation to the inverse
Hessian. Formally, we define the optimal diagonal preconditioner \(\mPstar\)  as
\aligns{
  \mPstar &\coloneqq \argmin_{\mP\succ 0, \text{diagonal}} \kappa \quad \text{such that} \quad \frac{1}{\kappa}\mP^{-1}\preceq \nabla^2 f(\vx) \preceq \mP^{-1} \text{ for all } \vx.
  \tag{\ref{eq:optimal-preconditioner-def}}
}
One way to interpret this definition is that $\mPstar^{-1}$ is the tightest diagonal approximation
to $\nabla^2 f(\vx)$.

We remark that we do not need \(f\) to be (strongly-)convex to define the
theoretically optimal step-size of $\nicefrac{1}{L}$ for gradient descent. 
Thus, one may wonder why we need strong-convexity (although we
relax this to requiring \(f\) to be PL in \cref{apx:pl}) to define what an optimal
preconditioner is in~\eqref{eq:optimal-preconditioner-def}. 

The main difference between the scalar step-size and per-coordinate step-sizes 
settings is whether the ``largest'' step-size or preconditioner is well-defined.
In the scalar setting, the largest step-size that is guaranteed to lead to progress everywhere 
(i.e., a step-size that satisfies the Armijo condition \eqref{eq:armijo} for all \(\vx\)) 
is well-defined and equal to \(\alpha_* \coloneqq \nicefrac{1}{L}\) for \(L\)-smooth function \(f\). 
Equivalently, 
\aligns{
  \alpha_{*} =
  \sup\setst*{ \alpha > 0}{\nabla^2 f(\vx) \preceq \frac{1}{\alpha} \mI}
  = \sup_{\vx \in \R^d} \lambda_{\max}(\nabla^2 f(\vx)),
}
where \(\lambda_{\max}(\nabla^2 f(\vx))\) is the largest eigenvalue of
\(\nabla^2 f(\vx)\). But in the case of preconditioners, the ordering on
positive definite matrices is not complete, so there is no single
``largest'' preconditioner \(\mP\) that satisfies $\nabla^2 f(\vx) \preceq \mP^{-1}$. 
We can still describe ``good'' preconditioners, that are guaranteed to satisfy the Armijo condition
(\cref{eq:sufficient-progress}) everywhere; this is the notion
of valid preconditioners defined in \cref{def:invalid}, 
which in set notation is $\setV \coloneqq \{\mP \succ 0: \nabla^2 f(\vx) \preceq \mP^{-1}\}$. 
With this definition, we can consider the set of valid
preconditioners $\mP$ for which there are no bigger valid
preconditioners, that is, $\setP \coloneqq \{\mP \in \setV : \not\exists
\, \mP' \in \setV~\text{s.t.}~\mP \prec \mP'\}$. However, \(\setP\) contains
incomparable preconditioners, that is, distinct matrices \(\mA, \mB \in
\setP\) that neither \(\mA \succeq \mB\) nor \(\mA \preceq \mB\) hold.

Let us look at an example with a quadratic function
(illustrated in \cref{fig:pareto-1})
\alignn{
  f(\vx) = \frac{1}{2}\lin{\vx, \mA \vx}
  && \text{ with Hessian } &&
  \mA = \bmat{
     .5 & .1\\
     .1 & 1.0
  }.
  \label{eq:pareto-1}
}
There are many preconditioners that are valid,\footnote{
  Up to invertibility issues which we address in the next subsection.
}
for example using the per-coordinate step-sizes
\aligns{
  \mP_L \approx \bmat{
      .91 & 0\\
      0 & .91
  },
  &&
  \mP_1 =\bmat{
      2.0 & 0\\
      0 & 0.0
  }, 
  &&
  \mP_2 =\bmat{
      0.0 & 0\\
      0 & 1.0
  },
  &&
  \mPstar \approx \bmat{
      1.75 & 0\\
      0 & 0.87
  }. 
}
The preconditioner $\mP_L$
corresponds to the $\nicefrac{1}{L}$ step-size, 
$\mP_1$ and $\mP_2$ take the largest possible step-size 
in each coordinate,
and $\mPstar$  is the optimal preconditioner 
according to \cref{eq:optimal-preconditioner-def}.
Those preconditioners are not comparable to each other, 
as neither $\mP_L \prec \mPstar$
nor $\mPstar \prec \mP_L$ hold.
Instead of looking at the matrices themselves, 
we use in~\eqref{eq:optimal-preconditioner-def} the condition number\footnote{Our definition is slightly different, but both notions are equivalent for positive definite \(\mP\).} of \(\smash{\mP^{1/2} \nabla^2 f(\vx) \mP^{1/2}}\) as a measure of quality of \(\mP\). 
This allows for a well-defined optimal preconditioner
as this condition number can be maximized. 

\subsection{Defining optimal preconditioners without twice-differentiability or strong-convexity}
\label{apx:pl}
Although we used twice-differentiability of $f$ 
to define the optimal preconditioner, this is not necessary. 
If $f$ is not twice-differentiable but still strongly-convex, 
the definition in \Cref{eq:optimal-preconditioner-def}
can be replaced by \cref{eq:relative-smooth-strong-convexity},
as finding the $\mP$-norm under which the function is most strongly-convex.
\aligns{
  \mPstar &= \argmin_{\mP \succ 0, \text{ diagonal}} \kappa 
  \\
  &\text{such that }
  \left\{
  \begin{array}{l}
  \frac{1}{\kappa} \frac{1}{2}\norm{\vx - \vy}^2_{\mP^{-1}} \leq
  f(\vy) - f(\vx) - \lin{\nabla f(\vx), \vy - \vx },
  \\
  f(\vy) - f(\vx) - \lin{\nabla f(\vx), \vy - \vx } \leq
  \frac{1}{2}\norm{\vy - \vx}^2_{\mP^{-1}}, 
  \end{array}
  \right.
  \quad \text{ for all } \vx, \vy. 
}
To avoid strong-convexity, 
we can instead use the PL inequality.
A function $f$ is $\mu$-PL if
\alignn{
  \frac{1}{\mu} \frac{1}{2}\norm{\nabla f(\vx)}^2 \geq f(\vx) - f(\vx_*).
  \label{eq:pl}
}
This property is implied by $\mu$-strong convexity.
We refer to the work of \citet{karimi2016pl} for the properties of PL functions 
and its relation to other assumptions.
To adapt \cref{eq:pl} to our results, 
we can measure the PL constant $\mu$ 
in the norm induced by $\mP$, 
and say that $f$ is $\mu$-PL in $\norm{\cdot}_\mP$ if
\alignn{
  \frac{1}{\mu} \frac{1}{2}\norm{\nabla f(\vx)}^2_{\mP} \geq f(\vx) - f(\vx_*).
  \label{eq:pl-P}
} We use this inequality in the convergence proof in
\cref{prop:convergence} since it is a consequence of strong-convexity.
As this property is the only property of strong-convexity needed for our
results, we can adapt our results to be competitive with the optimal
preconditioner defined using the PL inequality, using the definition
\alignn{
  \begin{aligned}
  \mPstar^{\text{\textsc{pl}}} &:= \argmin_{\mP \succ 0, \text{ diagonal}} \kappa 
  \\
  &\text{such that }
  \left\{
  \begin{array}{ll}
    \frac{1}{\kappa} \norm{\nabla f(\vx)}_{\mP}^2 \geq f(\vx) - f(\vx_*)
    & \text{ for all } \vx,
    \\
    f(\vy) - f(\vx) - \lin{\nabla f(\vx), \vy - \vx } \leq
    \frac{1}{2}\norm{\vy - \vx}^2_{\mP^{-1}}, 
    & \text{ for all } \vx, \vy. 
  \end{array}
  \right.
  \end{aligned}
  \label{eq:pl-definition}
}
If $f$ is $\mu$-PL and $L$-smooth, 
\cref{eq:pl-definition} has a feasible solution 
at $\mP = \nicefrac{1}{L}\mI$ 
number $\kappa = \nicefrac{L}{\mu}$.
The constraint based on the $\mu$-PL condition 
in \cref{eq:pl-definition}
is weaker than the definition using strong-convexity,
as strong-convexity implies the PL inequality. 
The optimal preconditioner defined
using the PL inequality \eqref{eq:pl-definition}
might thus achieve a lower condition number 
than the one using strong-convexity \eqref{eq:optimal-preconditioner-def}.
For example,
the quadratic $f(\vx) = (\nicefrac{1}{2})\lin{\vx, \mA \vx}$
with a positive semi-definite $\mA$
is not strongly convex
if the smallest eigenvalue of $\mA$ is 0.
The optimal preconditioner in \cref{eq:optimal-preconditioner-def} is ill-defined
(or has condition number $\kappa_* = \infty$).
In contrast, the optimal preconditioner defined using the PL inequality 
in \cref{eq:pl-definition}
has a finite condition number, 
as $\mP = \nicefrac{1}{L}\mI$ is a feasible solution
with condition number $\kappa = \nicefrac{L}{\lambda_{\min}^+(\mA)}$
where $\smash{\lambda_{\min}^+(\mA)}$
is the smallest non-zero eigenvalue of $\mA$.
As our proofs only use the properties guaranteed by \cref{eq:pl-definition},
our results also apply to PL functions.

\subsection{Valid and optimal preconditioners with singular matrices}
\label{apx:division-by-0}
\label{apx:interible_precs}
In the main text, we defined valid preconditioners (\cref{def:invalid}) only for 
positive definite matrices for ease of presentation. 
The notion of valid 
preconditioners 
can be extended to general positive semidefinite matrices. 
In the diagonal case, the convention \(1/0 = +\infty\) is a useful mental model
but can cause inconsistencies (such as \(\infty \cdot 0\)).
To extend the notion of valid preconditioners to general positive semidefinite matrices,
we can use the definition
\begin{definition}
  \label{def:valid_singular}
  A preconditioner \(\mP \succeq 0\) is \emph{valid} if \(\mP^{1/2} \nabla^2 f(\vx) \mP^{1/2} \preceq I\) for all \(\vx \in \R^d\).
\end{definition}
The above is well-defined for all positive semidefinite matrices.
An alternative to arrive at a definition 
closer to \cref{def:invalid}
is to consider  
the projection matrix \(\Pi_{\mP}\) onto the image of \(\mP\),
given by \(\Pi_{\mP} = \mP^{1/2}(\mP^{1/2})^{\dagger}\)
where \(\mP^{\dagger}\) is the Moore-Penrose pseudo-inverse of \(\mP\).
Using that, one can show that \(\mP\) is \emph{valid} (according to \cref{def:valid_singular}) if and only if 
\begin{equation*}
  \Pi_{\mP} \nabla^2 f(\vx) \Pi_{\mP} \preceq \mP^{\dagger}\qquad \text{for all}~\vx \in \R^d.
\end{equation*}
An example of a valid preconditioner that is covered by \cref{def:valid_singular} but not \ref{def:invalid} 
is the all-zeroes matrix. 
\cref{def:valid_singular} can seamlessly replace \ref{def:invalid}, and all the results follow similarly. 
Moreover, notice that the optimization problem
defining the optimal preconditioner~\eqref{eq:optimal-preconditioner-def} 
may not attain its minima on positive definite matrices when \(f\) is not strongly convex. 
In this case, we can define an optimal preconditioner 
as a limit point of a sequence that
attains in the limit the value in~\eqref{eq:optimal-preconditioner-def}
by replacing the minimum with an infimum. 
In this case, an optimal preconditioner may be singular, 
but the results in the main body also follow seamlessly using this definition. 
We decided to restrict our attention to non-singular preconditioners in the main paper for ease of exposition,
since when \(f\) is strongly-convex, an optimal preconditioner is always non-singular.

\subsection{Best competitive ratio achievable by the optimal preconditioner}

In \cref{sec:convergence}, 
we mentioned that the optimal preconditioner $\mPstar$ 
could be only $\nicefrac{1}{d}$-competitive. 
In fact, the competitive ratio of $\mPstar$ can be arbitrarily bad.
The reason for this is that the competitive ratio $\gamma$ does not
compare against $\mPstar$, but rather against any $\mP$ in the set
$\setS$ of potentially valid preconditioners. Moreover, this definition
only takes into account the norm \(\norm{\nabla f(\vx)}_{\mP}\) at a
\emph{fixed} \(\vx\), while the optimal preconditioner needs to have large norm
\emph{for all} \(\vx\).

For example, consider the scalar step-size case. If our current interval
of candidate step-sizes to try is $\setS = [0,1]$ but the optimal
step-size $\alpha_*$ is small, let us say $\alpha_* = \nicefrac{1}{10}$,
then \(\alpha_*\) is only $\nicefrac{1}{10}$-competitive in $\setS$. The
motivation for this definition of competitive ratio is that we cannot
check whether $\alpha$ is large compared to $\alpha_*$ (as we do not
know $\alpha_*$) but we can more easily ensure that a candidate
step-size $\alpha$ is $\gamma$-competitive in $\setS$ (for example
$\alpha = \nicefrac{1}{2}$ is $\nicefrac{1}{2}$-competitive in $[0,1]$).

In the previous example, the bad competitive ratio of $\alpha_*$ in $\setS$  
was mostly due to the fact that $\setS$ was large and that, for some
\(\vx\), step sizes larger than \(\alpha_*\) could satisfy the Armijo
condition~\eqref{eq:armijo}. Even if $\alpha_*$ is globally
optimal, we could make more progress by using a larger step-size if they were to be accepted, 
and we have not yet ruled out those step-sizes.
However, as $\setS$ shrinks, 
it may eventually converge to the interval $[0,1]$,
in which case the optimal step-size $\alpha_*$ would be $1$-competitive.

{\bf In high dimensions however,} the optimal preconditioner can have a
competitive ratio of $\nicefrac{1}{d}$ even when comparing only against
valid preconditioners.\footnote{%
How small the set $\setS_t$ can get is bounded by construction. 
The cutting plane procedure in \cref{sec:direction,sec:cutting-plane} 
only remove invalid preconditioners. 
The valid preconditioners contained in the initial set $\setS_0$ 
will always be in $\setS_t$, 
along with possibly more preconditioners 
that have not been deemed invalid over the course of optimization. 
} This is because the competitive ratio is
defined using the $\mP$-norm of the gradient, and we need to take the
direction of the gradient into account. For example, consider the
quadratic function (illustrated in \cref{fig:worst-case})
\begin{figure}[t]
\centering
\includegraphics[width=.6\textwidth]{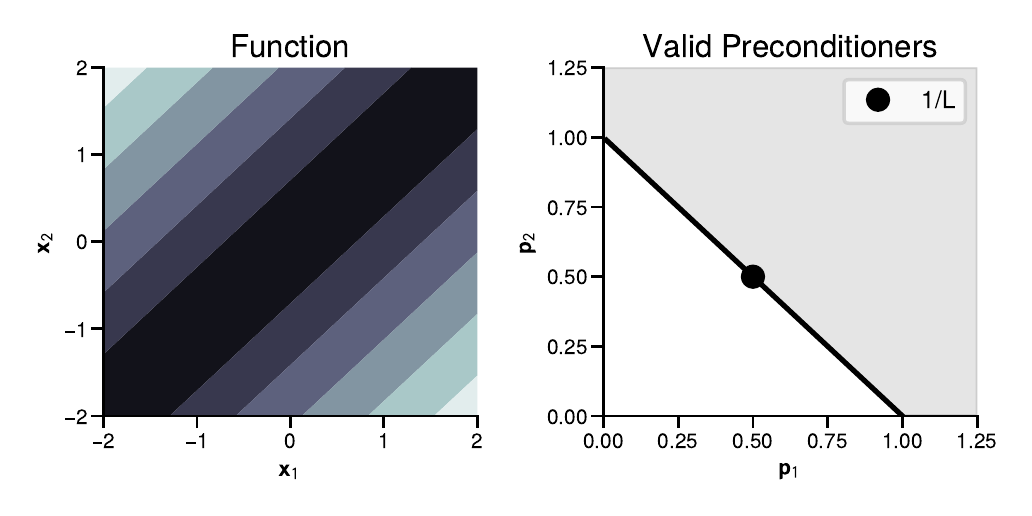}
\vspace{-1em}
\caption[Set of valid diagonal preconditioners for a quadratic]{%
Set of valid diagonal preconditioners (step-sizes $\vp_1$ and $\vp_2$)
for the quadratic in \cref{eq:worst-case-preconditioner}.
The set of valid preconditioners (\cref{def:invalid})
is the white region in the right figure.
}
\label{fig:worst-case}
\end{figure}
\alignn{
  f(\vx) = \frac{1}{2}\lin{\vx, \mA \vx}
  &&\text{ where }&&
  \mA = \bmat{
    1 & -1\\
    -1 & 1
  },
  \label{eq:worst-case-preconditioner}
}
with eigenvalues $\{2,0\}$ 
as $\mA = [-1,1]^\transp [-1,1]$.
The following three preconditioners are all valid:
\aligns{
  \mP_1 = \bmat{
    1 & 0 \\
    0 & 0
  },
  &&
  \mP_2 =
  \bmat{
    0 & 0 \\
    0 & 1
  },~\text{and}
  &&
  \mPstar=
  \bmat{
    \nicefrac{1}{2} & 0 \\
    0 & \nicefrac{1}{2}
  }.
}
The preconditioner $\mP_1$ 
takes the largest possible step-size in the first coordinate 
and ignores the second, while $\mP_2$ does the opposite. 
They are not good global preconditioners, 
as each ignores one coordinate. 
Yet, they can make much more progress (i.e., the objective value may decrease more) than the optimal preconditioner $\mPstar$
if the gradient is very skewed towards one coordinate. 
This implies that
$\mPstar$ may be only $\nicefrac{1}{2}$-competitive
in  $\{\mP_1, \mP_2\}$ for some \(\vx\) since 
\aligns{
  &&\text{if } \nabla f(\vx) = \bmat{1\\0},~\text{then}
  &&
  \norm{\nabla f(\vx)}_{\mP_1}^2 = 1,
  &&
  \norm{\nabla f(\vx)}_{\mP_2}^2 = 0,
  &&
  \norm{\nabla f(\vx)}_{\mPstar}^2 = \nicefrac{1}{2},
  \\
  \text{and}~&&\text{if } \nabla f(\vx) = \bmat{0\\1},
  ~\text{then}
  &&
  \norm{\nabla f(\vx)}_{\mP_1}^2 = 0,
  &&
  \norm{\nabla f(\vx)}_{\mP_2}^2 = 1,
  &&
  \norm{\nabla f(\vx)}_{\mPstar}^2 = \nicefrac{1}{2}.
}
The preconditioner $\mPstar$ is still a better choice globally (i.e, for
all \(\vx\)) since it ensures optimal worst-case linear rate in
preconditioned gradient descent. 
But there are better preconditioners that depend on the current gradient. 
We exploit this in the ellipsoid variant of multidimensional backtracking to improve our
competitive ratio. 
We backtrack from the preconditioner that maximizes the local progress guarantees 
to ensure a  $\nicefrac{1}{\sqrt{d}}$ competitive ratio,
while ensuring volume shrinkage of the set of candidate preconditioners 
when we call \(\CUT\), if the preconditioner fails the Armijo condition.

\clearpage
\section{Separating hyperplanes}
\label{apx:hyperplanes}
\label{apx:standard-hyperplanes}
\label{apx:strong-hyperplanes}
\label{apx:no-convexity-is-hard}
\label{apx:alternative-hyperplane-in-inverse}

In this section, we prove Propositions \ref{lem:starting-hyperplane-main} and \ref{lem:stronger-hp} on existence and strengthening
of separating hyperplanes for valid preconditioners.

{\bf General idea.} 
Let us start with a summary of the 
separating hyperplanes used to search for good preconditioners as discussed in \Cref{sec:convergence,sec:direction}.
The goal of the separating hyperplanes 
is to give us ways to shrink the initial set of potential preconditioners $\setS$ to narrow in on valid preconditioners
using the cutting-plane methods in \cref{sec:cutting-plane}. 
At each iteration we are looking for preconditioners \(\mP\)
that satisfy the Armijo condition at \(\vx\) given by
\aligns{
  f(\vx - \mP \nabla f(\vx))
  \leq 
  f(\vx) - \frac{1}{2}\norm{\nabla f(\vx)}^2_\mP.
}
If \(\mP\) fails the Armijo condition, we conclude that \(\mP\) is invalid. 
To obtain more information, we look at the condition as a function of the (diagonal of the) preconditioner,
and define the gap function at $\vx$,
\aligns{
  h(\vp) \coloneqq f(\vx - \Diag(\vp)\nabla f(\vx)) - f(\vx) + \frac{1}{2}\norm{\nabla f(\vx)}_{\Diag(\vp)}^2, \qquad \forall \vp \in \Rnn^d.
}
Then, $h(\vp) \leq 0$ if $\mP = \Diag(\vp)$ satisfies the Armijo
condition at $\vx$, and $h(\vp) > 0$ otherwise. 
Any preconditioner \(\Diag(\vq)\) such that $h(\vq) > 0$ 
is guaranteed to be invalid.
We can use the gradient of $h$ at \(\vp\) 
and convexity to find a half-space 
such that one side contains only 
preconditioners with $h(\vp) > 0$. 
In this section, we show how to construct such half-space,
and strengthen them using the partial order on matrices,
which is needed to ensure volume shrinkage of our cutting plane methods.

\subsection{Stronger hyperplanes}
In the main body we presented the
strengthening of separating hyperplanes via truncation (\cref{lem:stronger-hp})
after the result of existence of separating hyperplanes (\cref{lem:starting-hyperplane-main}).
Here, we prove a more general lemma on strengthening half-spaces of
invalid preconditioners first, 
as it is useful in simplifying the proof of \cref{lem:starting-hyperplane-main}. 
\cref{lem:stronger-hp} follows directly from the following lemma.

\begin{lemma}
  \label{lemma:general-stronger-hyp}
  Let $\setH_{\vv, \alpha}$
  be the intersection 
  of the non-negative orthant $\Rnn^d$
  and the half-space defined by the vector $\vv \in \R^d$ and coefficient $\alpha > 0$,
  \begin{equation*}
    \setH_{\vv,\alpha} \coloneqq \setst{\vp \in \Rnn^d}{\lin{\vv,\vp} > \alpha}.
  \end{equation*}
  Define \(\vu \coloneqq \max\{\vv, 0 \} \) and let $\setH_{\vu, \alpha}$ 
  be defined similarly as above, that is,
  \begin{equation*}
    \setH_{\vu, \alpha} \coloneqq \setst{\vp \in \Rnn^d}{\lin{\vu,\vp} > \alpha}.
  \end{equation*}
  If $\setH_{\vv,\alpha}$ 
  only contains diagonals of invalid preconditioners, 
  that is, \(\Diag(\vp)\) is invalid for any \(\vp \in \setH_{\vv}\),
  Then \(\setH_{\vv, \alpha} \subseteq \setH_{\vu, \alpha}\) and \(\setH_{\vu, \alpha}\)
  only contains diagonals of invalid preconditioners.
\end{lemma}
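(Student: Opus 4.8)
The plan is to establish the two claims separately: the inclusion $\setH_{\vv,\alpha} \subseteq \setH_{\vu,\alpha}$, and the fact that $\setH_{\vu,\alpha}$ contains only diagonals of invalid preconditioners. The inclusion is the easy part: for any $\vp \in \Rnn^d$ we have $\vu = \max\{\vv,0\} \geq \vv$ entrywise, and since $\vp \geq 0$, monotonicity of the inner product in each coordinate gives $\lin{\vu,\vp} \geq \lin{\vv,\vp}$. Hence if $\lin{\vv,\vp} > \alpha$ then $\lin{\vu,\vp} > \alpha$, which is exactly $\setH_{\vv,\alpha} \subseteq \setH_{\vu,\alpha}$.

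For the second claim, I would take an arbitrary $\vp \in \setH_{\vu,\alpha}$, so $\lin{\vu,\vp} > \alpha$, and exhibit a point $\vp' \in \setH_{\vv,\alpha}$ with $\vp' \leq \vp$ entrywise; then $\Diag(\vp') \preceq \Diag(\vp)$, and since $\Diag(\vp')$ is invalid by hypothesis, the monotonicity of validity under the partial order (if $\mP'$ is invalid and $\mP \succeq \mP'$ then $\mP$ is invalid, as noted in the discussion following \cref{def:invalid}) shows $\Diag(\vp)$ is invalid. The natural choice is to zero out the coordinates where $\vv$ is negative: set $\vp'[i] = \vp[i]$ if $\vv[i] \geq 0$ and $\vp'[i] = 0$ if $\vv[i] < 0$. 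Then $0 \leq \vp' \leq \vp$, so $\vp' \in \Rnn^d$, and on the support of $\vp'$ the coordinates of $\vv$ and $\vu$ agree, so $\lin{\vv,\vp'} = \lin{\vu,\vp'} = \lin{\vu,\vp} > \alpha$, where the middle equality uses that $\vu[i]\vp'[i] = 0 = \vv[i]\vp'[i]$ exactly when $\vp'[i]=0$, i.e.\ when $\vv[i]<0$, and $\vu[i]=\vv[i]$ otherwise. Hence $\vp' \in \setH_{\vv,\alpha}$, which completes the argument.

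I do not expect any serious obstacle here; the only point requiring mild care is making the bookkeeping on coordinates precise — showing that discarding the negative-$\vv$ coordinates of $\vp$ simultaneously (i) keeps the point in the non-negative orthant, (ii) only decreases it in the partial order, and (iii) does not decrease the inner product below $\alpha$ because those coordinates contributed non-positively to $\lin{\vu,\vp}$ anyway (indeed they contributed zero to $\lin{\vu,\vp}$ since $\vu[i]=0$ there, and non-positively to $\lin{\vv,\vp}$). Finally, \cref{lem:stronger-hp} follows by applying this lemma with $\alpha = 1$ and $\vv$ the vector from \cref{lem:starting-hyperplane-main}, whose associated half-space $\setH_>(\vv) = \setH_{\vv,1}$ contains only invalid preconditioners by that proposition; the resulting $\vu = \max\{\vv,0\}$ gives $\setH_>(\vu) = \setH_{\vu,1} \supseteq \setH_>(\vv)$ containing only invalid preconditioners, as claimed.
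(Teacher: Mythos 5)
Your proof is correct and matches the paper's argument essentially line for line: the inclusion is established by the same entrywise monotonicity observation, and for the second claim you construct the same truncated point (zeroing the coordinates where $\vv[i] < 0$), verify it lies in $\setH_{\vv,\alpha}$ by the same chain of equalities, and invoke monotonicity of invalidity under the Loewner order exactly as the paper does.
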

\begin{proof}
  {\bf Inclusion $\setH_{\vv,\alpha}\subseteq \setH_{\vu,\alpha}$.}
  We have that $\lin{\vp, \vv} > \alpha$ implies $\lin{\vp,\vu} > \alpha$
  for any $\vp \in \Rnn^d$ since
  \begin{equation*}
    \lin{\vv, \vp} 
    = \sum_{i \colon \vv[i] \geq 0} \vv[i] \vp[i]
   + \sum_{i \colon \vv[i] < 0} \vv[i] \vp[i]
   \leq \sum_{i \colon \vv[i] \geq 0} \vv[i] \vp[i]
   = \sum_{i \colon \vv[i] \geq 0} \vu[i] \vp[i]
   = \lin{\vu, \vp}.
  \end{equation*}
  {\bf $\setH_{\vu, \alpha}$ only contains invalid diagonals.} Let
  \(\vp_{\vu} \in \setH_{\vu, \alpha}\). We can show that
  $\Diag(\vp_{\vu})$ is invalid by finding \( \vp_{\vv} \in \setH_{\vv, \alpha}
  \) such that $\Diag(\vp_{\vv}) \preceq \Diag(\vp_{\vu})$. Since
  \(\Diag(\vp_{\vv})\) is invalid by assumption, this would imply that
  $\Diag(\vp_{\vu})$ is also invalid. To find $\vp_{\vv}$, we can
  truncate the entries of $\vp_{\vu}$ as 
  \begin{equation*}
    \vp_{\vv}[i] \coloneqq \begin{cases}
      \vp_{\vu}[i] &\text{if}~\vv[i] \geq 0\\
      0 &\text{otherwise},
    \end{cases}
    \qquad \forall i \in \{1, \dotsc, d\}.
  \end{equation*}
  Then \(\vp_{\vv} \in \setH_{\vv, \alpha}\) since
  $\alpha
   < \lin{\vu, \vp_{\vu}}
   = \lin{\vu, \vp_{\vv}}
   = \lin{\vv, \vp_{\vv}}$.
  \footnote{
    One may worry that our original
    definition of valid preconditioners 
    has a division by \(0\)
    if any entry of the preconditioner is \(0\)
    as a preconditioner is valid if $\nabla^2 f(\vx) \preceq \mP^{-1}$
    (\cref{def:invalid}). 
    It is enough to use the convention that \(1/0 = +\infty\),
    although this might lead to inconsistencies. 
    In \cref{apx:division-by-0} we 
    discuss a more general definition without the use of infinities.
  } 
  and \(\Diag(\vp_{\vu}) \succeq \Diag(\vp_{\vv})\), as desired.
\end{proof}

\subsection{Separating hyperplanes for invalid preconditioners}

We are now in position to prove \cref{lem:starting-hyperplane-main}.

\begin{proof}[Proof of \Cref{lem:starting-hyperplane-main}]

Throughout the proof, we shall denote by \(\mP\) the matrix
\(\Diag(\vp)\). If $f$ is convex, then $h$ also is since the map $\vp
\in \Rnn^d \mapsto f(\vx - \mP \nabla f(\vx))$ is the composition of an
affine transformation and a convex function, and $\norm*{\nabla
f(\vx)}_{\mP}^2 = \lin{\nabla f(\vx), \Diag(\vp) \nabla f(\vx) }$ is
linear in $\vp$. Convexity of \(h\) yields the inequality
\begin{equation*}
  h(\vp) \geq h(\vq) + \lin{\nabla h(\vq), \vp - \vq},
  \qquad \forall \vp \in \Rnn^d.
\end{equation*}
 This implies that if \(\vp\) is such that $h(\vq) +
\lin{\nabla h(\vq), \vp - \vq} > 0$, then $h(\vp) > 0$, which implies
that \(\Diag(\vp)\) is an~\invalid{} preconditioner. 
Rearranging we conclude that \(\Diag(\vp)\) is invalid for all
\(\vp\) in the set in~\eqref{eq:part-1}, i.e., in
\begin{equation}
  \label{eq:part-1-apx}
 \setst{\vp \in \Rnn^d}{ \lin{\nabla h(\vq), \vp} > \lin{\nabla h(\vq), \vq} - h(\vq)}
\end{equation}
We express the above half-space as 
\aligns{
  \setH_{>}(\vv) = \{\vp : \lin{\vp,\vv} > 1\} \text{ for }
  \vv \coloneqq \frac{\nabla h(\vq)}{\paren{\lin{\nabla h(\vq), \vq} - h(\vq)}}.
}
Yet, for \(\setH_{>}(\vv)\) to be equivalent to the set in~\eqref{eq:part-1-apx} 
or even to be well-defined, we need to ensure $\lin{\nabla h(\vq), \vq} - h(\vq) > 0$. 
To see that this holds, note first that by convexity of
\(h\) and that fact that \(h(0) = 0\) we have
\begin{equation*}
  h(0) \geq h(\vq) + \lin{\nabla h(\vq), 0 - \vq} 
  \implies 
  \lin{\nabla h(\vq), \vq - 0} - h(\vq)  \geq - h(0) = 0
\end{equation*}
To show that the last inequality is strict, assume that 
\(\lin{\nabla h(\vq), \vq - 0} - h(\vq) = 0\) 
for the sake of contradiction. 
By \cref{lemma:general-stronger-hyp}, the half-space 
\(\smash{\setH \coloneqq \setst{\vp \in \Rnn^d}{\lin{{[\nabla h(\vx)]}_+, \vp} > 0}} \)
contains only diagonals of invalid preconditioners, 
where \(\smash{{[\nabla h(\vx)]}_+ \coloneqq \max\{\nabla h(\vx), 0\}}\) entry wise. 
However, \(\smash{(\nicefrac{1}{L})\vone \in \setH}\) as \([\nabla h(\vx)]_+ \geq 0 \)
and should be invalid,
which is a contradiction since \(f\) is \(L\)-smooth 
and $\nicefrac{1}{L}\mI$ is valid. 
Therefore, \(\lin{\nabla h(\vq), \vq - 0} - h(\vq) > 0\).

Finally, we can write \(\vv\) in terms of \(f\) and \(\mQ\). To do so,
first define \(\vx^+ \coloneqq \vx - \mQ \nabla f(\vx)\), and the
gradients of \(f\) at different points by \(\vg \coloneqq \nabla
f(\vx)\) and \(\vg^+ \coloneqq \nabla f(\vx^+)\). Then, by the
chain-rule,
\begin{align*}
  \nabla h(\vq) = -\nabla f(\vx - \mQ \nabla f(\vx)) \odot \nabla f(\vx) + \frac{1}{2} \nabla f(\vx) \odot \nabla f(\vx) = 
  - \vg^+ \odot \vg + \frac{1}{2}\vg \odot \vg,
\end{align*}
which implies
\begin{align*}
  \lin{\nabla h(\vq), \vq} - h(\vq)
  &= - \lin{\vg^+, \mQ \vg} + \frac{1}{2}\lin{\vg, \mQ \vg}
  - f(\vx^+) + f(\vx) + \frac{1}{2} \lin{\vg, \mQ \vg}
  \\
  &= f(\vx)  - \lin{\vg^+, \mQ \vg} - f(\vx^+).
\end{align*}
Plugging these equations in the definition of \(\vv\) yields
\begin{equation*}
  \vv = \frac{\nabla h(\vq)}{\lin{\nabla h(\vq),\vq} - h(\vq)} 
  =  \frac{
    (\frac{1}{2}\vg - \vg^+) \odot \vg
  }{
    f(\vx) 
    - \lin{\vg^+, \mQ \vg} 
    - f(\vx^+) 
  }.
  \qedhere
\end{equation*}
\end{proof}

\paragraph{Remark on assumptions of
\Cref{lem:starting-hyperplane-main}.} One may have noticed that we never
use the assumption that \(\mQ\) fails the Armijo condition (i.e., that
\(h(\vq) > 0\)) in the proof of the proposition. In fact, the
proposition holds for any \(\smash{\vq \in \Rnn^d}\). However, and crucially for
our application, we have that \(\vq\) is in the half-space
\(\setH_{>}(\vu)\) of invalid diagonals from
\Cref{lem:starting-hyperplane-main}. In multidimensional backtracking,
\(\vq\) is the diagonal of a preconditioner \(\Diag(\vq)\) that failed
the Armijo condition \(h(\vq) > 0\). Since \(\vq\) is close to the
origin in multidimensional backtracking, we can ensure the half-space
\(\setH_{>}(\vu)\) contains a significant portion of our current set of
candidate preconditioners, leading to significant shrinkage of the set of candidate preconditioners whenever \(\CUT\) is invoked.

\clearpage
\section{Cutting-plane methods}
\label{apx:set-box}
\label{apx:set-simplex}
\label{apx:set-ellipsoid}
\label{apx:refinement}

\subsection{Boxes}
\label{apx:subsec-set-box}

Given a box \(\setB(\vb)\) for some \(\vb \in \Rnn^d\) and a vector
\(\vu \in \Rnn^d\), our cutting plane method needs to find a box
\(\setB(\vb^+)\) that contains \( \setB(\vb) \cap \setH_{>}(\vu)\)
which, hopefully, has smaller volume than \(\setB(\vb)\). 

The next lemma gives a formula for the \emph{minimum volume} box for any \(\vu\), which is
used in the main text to define \(\CUT\) in \cref{eq:wtv}.
Moreover, we show that if
the half-space \(\setH_{>}(\vu)\) is close enough to the origin (since otherwise we might have \(\vb^+ = \vb\)), then we have a significant volume decrease.

\begin{lemma}
  \label{lemma:box_vol_shrink_apx}
  Let \(\vb \in \Rnn^d\) and \(\vq \in \setB(\vb)\). Let \(\vu \in
  \Rnn^d\). Then the box \(\setB(\vb^+)\) with minimum volume that contains \(\setB(\vb) \cap \setH_{\leq}(\vu)\) is given by
  (using the convention that \(1/\vu[i] = + \infty\) if \(\vu[i] = 0\))
  \begin{equation}
    \label{eq:mvb_formula_apx}
    \vb^+[i] \coloneqq \min\{\vb[i], \nicefrac{1}{\vu[i]}\}, \qquad
    \forall i \in \{1, \dotsc, d\},
  \end{equation}
  Moreover, if \((\nicefrac{1}{2d}) \cdot \vb\) is excluded by the half-space, that is,
  \(\vb \in \setH_{>}(\vu)\), 
  then \(\Vol(\setB(\vb^+)) \leq (\nicefrac{1}{(d + 1)})\Vol(\setB(\vb^+))\).
\end{lemma}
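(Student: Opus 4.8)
The plan is to split the statement into two parts: first verify the closed-form formula \eqref{eq:mvb_formula_apx} gives the minimum-volume box containing $\setB(\vb) \cap \setH_{\leq}(\vu)$, and then quantify the volume decrease when the half-space cuts deep enough. For the first part, I would argue on a per-coordinate basis. Any axis-aligned box $\setB(\vc)$ with $\vc \geq 0$ that contains $\setB(\vb) \cap \setH_{\leq}(\vu)$ must in particular contain, for each coordinate $i$, the point $\vp$ that is as large as possible in coordinate $i$ while staying in both $\setB(\vb)$ and $\setH_{\leq}(\vu)$. That extreme value is $\min\{\vb[i], 1/\vu[i]\}$: we cannot exceed $\vb[i]$ by membership in the box, and setting all other coordinates to zero, the constraint $\lin{\vu,\vp}\leq 1$ forces $\vu[i]\vp[i]\leq 1$, i.e.\ $\vp[i]\leq 1/\vu[i]$ (with the convention $1/0=+\infty$ when $\vu[i]=0$, in which case only the box constraint is active). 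Hence $\vc[i] \geq \vb^+[i]$ for every $i$, so $\setB(\vb^+) \subseteq \setB(\vc)$ and $\setB(\vb^+)$ has the smallest volume. Conversely $\setB(\vb^+)$ does contain the intersection, since $\vb^+ \leq \vb$ coordinatewise and for any $\vp$ in the intersection we have $\vp[i] \leq \vb[i]$ and $\vu[i]\vp[i] \leq \lin{\vu,\vp} \leq 1$ so $\vp[i]\leq 1/\vu[i]$; this part uses $\vu \geq 0$, which holds by \cref{lem:stronger-hp}.

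For the volume shrinkage, note $\Vol(\setB(\vb^+)) = \prod_i \vb^+[i]$ and $\Vol(\setB(\vb)) = \prod_i \vb[i]$, so the ratio is $\prod_i \min\{1, 1/(\vu[i]\vb[i])\} = \prod_{i \in S} 1/(\vu[i]\vb[i])$ where $S = \{i : \vu[i]\vb[i] > 1\}$. The hypothesis is that the candidate point queried, $\vq = (\nicefrac{1}{2d})\vb$ (or more precisely, using $\gamma = \nicefrac{1}{2d}$ so that the queried diagonal is $\gamma\vb$), lies in $\setH_{>}(\vu)$, i.e.\ $\lin{\vu, \gamma\vb} > 1$, equivalently $\sum_i \vu[i]\vb[i] > 1/\gamma = 2d$. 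I would then bound $\prod_{i\in S} \vu[i]\vb[i]$ from below using AM--GM or a convexity argument on the constraint $\sum_i \vu[i]\vb[i] > 2d$ together with $|S| \leq d$: the product $\prod_{i\in S}\vu[i]\vb[i]$ is minimized (for fixed sum over $S$, and noting coordinates outside $S$ contribute factor $1$ to the volume ratio but at most $1$ each to the sum, so $\sum_{i\in S}\vu[i]\vb[i] > 2d - |S| \geq d$) by spreading mass as evenly as possible, but since each term must exceed $1$, the relevant bound comes from pushing toward the corner. The cleanest route: with $|S| = k \leq d$ terms each $>1$ summing to more than $2d - (d-k) = d+k \geq k+k$ is not tight enough directly, so instead I would observe $\sum_{i \in S}\vu[i]\vb[i] > 2d - (d - k)$ and, since the product of $k$ reals each at least $1$ with a given sum $s$ is at least $s - (k-1)$ (by repeatedly transferring mass to one coordinate), we get $\prod_{i\in S}\vu[i]\vb[i] \geq (2d - d + k) - (k-1) = d+1$. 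Therefore $\Vol(\setB(\vb^+))/\Vol(\setB(\vb)) \leq 1/(d+1)$.

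The main obstacle I anticipate is the combinatorial bookkeeping in the last step: carefully handling the set $S$ of "active" coordinates, making sure the coordinates outside $S$ are correctly accounted for in both the sum constraint and the volume ratio, and justifying the inequality "product of $k$ numbers each $\geq 1$ with sum $s$ is at least $s-k+1$" (which follows since if two of them are $a,b \geq 1$ then $ab \geq a+b-1$, so merging repeatedly reduces $k$ while not increasing the product). I would also double-check the exact relation between the backtracking coefficient $\gamma$, the stated hypothesis "$(\nicefrac{1}{2d})\vb$ is excluded," and the inequality $\lin{\vu,\vb} > 2d$ — there appears to be a minor typo in the statement ("$\vb \in \setH_{>}(\vu)$" should presumably read "$(\nicefrac{1}{2d})\vb \in \setH_{>}(\vu)$", matching the sentence before it and the choice $\gamma = \nicefrac{1}{2d}$ in \Cref{thm:box}), and the conclusion's right-hand side should be $\Vol(\setB(\vb))$ rather than $\Vol(\setB(\vb^+))$; I would state and use the corrected versions.
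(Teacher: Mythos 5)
Your proof plan is correct, and for the volume-shrinkage part it takes a genuinely different, and arguably cleaner, route than the paper. The paper first proves the bound when only a single coordinate shrinks (if $\setI = \{j\}$, then $\vu[i]\vb[i]\le 1$ for $i\ne j$ forces $\vu[j]\vb[j] > d+1$ directly from $\sum_i\vu[i]\vb[i]>2d$), and then reduces the general case to this one by constructing a modified vector $\vu'$ that shifts mass from one shrinking coordinate to another while preserving $\lin{\vu', (\nicefrac{1}{2d})\vb}$, showing this can only increase the volume of the resulting box; the reduction requires some careful bookkeeping to verify that $\vu'$ still excludes the query point and that the volume ratio $(\alpha+\beta-1)/\alpha\beta < 1$ for $\alpha,\beta > 1$. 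Your argument instead bounds the ratio $\prod_{i\in S}1/(\vu[i]\vb[i])$ directly, with $S = \{i:\vu[i]\vb[i]>1\}$ and $k=|S|$: you observe that coordinates outside $S$ contribute at most $1$ each to the sum, so $\sum_{i\in S}\vu[i]\vb[i]>d+k$, and then invoke the superadditivity-type bound $\prod_{i=1}^k a_i \ge \sum_{i=1}^k a_i - k + 1$ for $a_i\ge 1$ (proved by the merging step $ab\ge a+b-1$, i.e.\ $(a-1)(b-1)\ge 0$). This gives $\prod_{i\in S}\vu[i]\vb[i]>d+1$ in one shot, bypassing the reduction entirely. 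Both are valid; yours trades the paper's induction on $|\setI|$ for a single elementary inequality, which I find shorter and less error-prone.

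Your diagnosis of the typos in the statement is also correct: the condition should read $(\nicefrac{1}{2d})\vb \in \setH_{>}(\vu)$ (matching the prose before it and the paper's own proof), and the conclusion should have $\Vol(\setB(\vb))$, not $\Vol(\setB(\vb^+))$, on the right-hand side. The $\vq$ in the hypotheses is indeed vestigial. Your part-1 argument matches the paper's in substance — per-coordinate separation of the constraints, with the added explicit verification that $\setB(\vb^+)$ contains the intersection (which does use $\vu\ge 0$, as you note).
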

\begin{proof}
  {\bf Formula for \(\vb^+\).} 
  Finding the minimum volume box containing \(\setB(\vb) \cap \setH_{\leq}(\vu)\),
  \begin{align*}
    \vb^+ = \argmin_{\vc \in \R^d} \Vol(\setB(\vc))
    \quad \text{s.t.}~ 
    \setB(\vb) \cap \setH_{\leq}(\vu) \subseteq \setB(\vc),
  \end{align*}
  is equivalent to finding the solution to the following optimization problem:
  \begin{align*}
    \vb^+ = \argmin_{\vc \in \R^d} \prod_i \vc[i] %
    \quad \text{s.t.}~ \max_{\vp \in \setB(\vc) \cap \setH_{\leq}(\vu)} \vp[i] \leq \vc[i]~\text{for each}~i \in \{1, \dotsc, d\}.
  \end{align*}
  As the constraints separate over the coordinates, the minimization can
  be done for each coordinate separately. As the function is increasing
  in \(\vc[i]\), the minimum is achieved by making all the constraints
  tight, which giver the formula for \(\vb^+\) in the statement of the
  lemma.

  {\bf Volume decrease.} Let us prove the second part of the statement.
  Thus, assume for the remainder of the proof that \((\nicefrac{1}{2d})
  \cdot \vb \in \setH_{>}(\vu)\). We first show that $\Vol(\setB(\vb^+))
  \leq (\nicefrac{1}{(d + 1)})\Vol(\setB(\vb^+))$ if we assume that the
  update from \(\setB(\vb)\) to \(\setB(\vb^+)\) shrinks the box in only
  one coordinate, i.e., 
  \begin{equation}
    \label{eq:box_worst_case}
    \setI \coloneqq \setst{i \in [d]}{\vb[i] > \nicefrac{1}{\vu[i]}} = 
    \setst{i \in [d]}{\vb^+[i] \neq \vb[i]} ~\text{has exactly one element}.
  \end{equation}
  Assume the above holds and \(\setI = \{j\}\). 
  Then, as \((\nicefrac{1}{2d}) \cdot \vb \in \setH_{>}(\vu)\)
  implies \(\lin{\vu, (\nicefrac{1}{2d})\vb} > 1\), 
  \begin{equation*}
    1 < \lin{\vu, (\nicefrac{1}{2d})\vb} \leq  \frac{1}{2d}(\vu[j]\vb[j] + d - 1)
    \implies (d + 1) \frac{1}{\vu[j]} \leq \vb[j].
  \end{equation*}
  This together with the fact that \(\vb^+[i] = \vb[i]\) for all \(i
  \neq j\) and \(\vb^+[j] = \nicefrac{1}{\vu[j]}\) yields
  \begin{equation*}
    \Vol(\setB(\vb^+))
    = \prod_{i = 1}^d \vb^+[i]
    =  \frac{1}{\vu[j]} \cdot \prod_{i \neq j} \vb[i]
    \leq \frac{1}{d+1} \prod_{i = 1}^d \vb[i]
    = \frac{1}{d+1} \Vol(\setB(\vb)).
  \end{equation*}
  To complete the proof, we only need to show we may
  assume~\eqref{eq:box_worst_case} holds. 
  Assume the opposite, that is, 
  that there are two distinct coordinates that shrink from \(\vb^+\) to \(\vb\). 
  We will show that the volume shrinks more, meaning 
  the above bound also applies.
  Formally, assume there are \(j,k \in \setI\) that are distinct. 
  For this part, it will be useful to denote by \(\vb^+(\vu)\)
  the point defined in \cref{eq:mvb_formula_apx} for a given vector \(\vu\). 
  We will show we can construct \(\smash{\vu' \in \Rnn^d}\) 
  such that \(\Vol(\vb^+(\vu)) \leq \Vol(\vb^+(\vu'))\) while
  maintaining the property \((\nicefrac{1}{2d}) \vb \in \setH_{>}(\vu')\) and such that \(\vb^+(\vu')[i] \neq \vb[i]\) for all \(i \in \setI \setminus \{j\}\), 
  which makes~\eqref{eq:box_worst_case} follow by induction. 
  Indeed, define \(\vu' \in \Rnn^d\) by
  \begin{equation}
    \label{eq:def_uprime}
    \vu'[i] \coloneqq \vu[i]~\text{for}~i \not\in \{j,k\}, \quad \vu'[j] \coloneqq \frac{1}{\vb[j]}, \quad \text{and}~\vu'[k] \coloneqq \vu[k] + \frac{\vb[j]}{\vb[k]}\paren{\vu[j] - \frac{1}{\vb[j]}}.
  \end{equation}
  First, note that \((\nicefrac{1}{2d})\vb \in \setH_{>}(\vu')\) since 
  \begin{align*}
    \lin{\vu' - \vu, \vb}
    &=  \vb[j](\vu'[j] - \vu[j]) + (\vu'[k] - \vu[k]) \vb[k]
    \\
    &=  \vb[j]\Biggl(\frac{1}{\vb[j]} - \vu[j]\Biggr) + \bigg(\frac{\vb[j]}{\vb[k]}\Big( \vu[j] - \frac{1}{\vb[j]}\Big)\bigg) \vb[k] = 0
  \end{align*}
  and, thus, \(1 < \lin{\vu, (\nicefrac{1}{2d})\vb} = \lin{\vu', (\nicefrac{1}{2d}) \vb}\). Let us now show that \(\Vol(\setB(\vb^+(\vu))) \leq \Vol(\setB(\vb^+(\vu')))\). Since \(\vb^+(\vu)[i] = \vb^+(\vu')[i] \) for \(i \not\in \{j, k\}\), we have
  \begin{align*}
    \frac{\Vol(\setB(\vb^+(\vu)))}{\Vol(\setB(\vb^+(\vu')))}
    &= \frac{\vb^+(\vu)[j]}{\vb^+(\vu')[j]} \cdot \frac{\vb^+(\vu)[k]}{\vb^+(\vu')[k]}
    \\
    & = \frac{\min(\vb[j], \nicefrac{1}{\vu[j]})}{\min(\vb[j], \nicefrac{1}{\vu'[j]})} \cdot
    \frac{\min(\vb[k], \nicefrac{1}{\vu[k]})}{\min(\vb[k], \nicefrac{1}{\vu'[k]})}
    \\
    &= \frac{\nicefrac{1}{\vu[j]}}{ \vb[j]} \cdot
    \frac{\nicefrac{1}{\vu[k]}}{\nicefrac{1}{\vu'[k]}}
    &\text{(since \(j,k \in \setI\) and by~\eqref{eq:def_uprime})}
    \\
    &= \frac{1}{\vb[j] \vu[j]} \cdot \frac{1}{\vu[k]}
    \biggl(\vu[k] + \frac{\vb[j]}{\vb[k]}\paren{\vu[j] - \frac{1}{\vb[j]}}\biggr)
    \\
    &=
    \frac{1}{\vb[j] \vu[j]} \cdot \frac{1}{\vu[k] \vb[k]}(\vb[k] \vu[k] + \vb[j] \vu[j] - 1).
  \end{align*}
  To get that 
  \(\Vol(\vb^+(\vu)) \leq \Vol(\vb^+(\vu'))\), 
  we can show that last line is bounded by $<1$.
  Using the substitution \(\alpha \coloneqq \vb[j] \vu[j] \) and \(\beta \coloneqq \vb[k] \vu[k]\),
  we want to show that 
  \begin{equation*}
    \frac{\alpha + \beta -1}{\alpha \beta} < 1
     \iff \alpha \beta - \alpha - \beta + 1 > 0
     \iff (\alpha -1)(\beta -1) > 0.
  \end{equation*}
  This holds if \(\alpha > 1\) and \(\beta > 1\), 
  is implied by \(j,k \in \setI\) since \(\alpha = \vb[j] \vu[j] > 1\) and $\beta = \vb[k] \vu[k] > 1$.
  A simple induction shows we may assume~\eqref{eq:box_worst_case} holds. To see that \((\alpha + \beta - 1)/\alpha \beta < 1\), note that
\end{proof}

Equipped with the above lemma, we are in position to prove \cref{thm:box}.

\begin{proof}[Proof of~\cref{thm:box}]
  Property (a), 
  holds by induction because, for any \(\vu_t\) used in a
  call to \(\CUT\), we have \(\mP^* \in \setH_{\leq}(\vu_t)\) since
  \(\mP^*\) is valid and since by \cref{lem:starting-hyperplane-main} the half-space
  \(\setH_{\leq}(\vu_t)\) contains only diagonals of invalid preconditioners. 
  For (b), fix \(t \in \{1, \dotsc, T\}\) and recall
  that in this case we have \(\setS_t = \setst{\Diag(\vp)}{ \vp
  \in\setB(\vb_t)}\) and \(\mPt = (\nicefrac{1}{2d}) \cdot
  \Diag(\vb_t)\). The competitive ratio of \(\nicefrac{1}{2d}\)  follows since \(\Diag(\vb_t)\) is the
  preconditioner that maximizes \(\norm{\nabla f(\vx_t)}_{\mP}\) for
  \(\mP \in \setB(\vb_t)\). Finally, for (c) by
  \cref{lemma:box_vol_shrink_apx} we have that every call to \(\CUT\)
  makes the volume of the set decrease by \(1/c \coloneqq 1/(d+1)\).
  Moreover, one can easily verify that \(\vb_t[i] \geq
  \min\{\nicefrac{1}{L}, \vb_0[i]\}\) for all \(i \in \{1, \dotsc, d\}\)
  since \(\setB((\nicefrac{1}{L})\ones)\) contains only diagonals of
  valid preconditioners. Therefore, for \(\vb_{\min}[i] \coloneqq
  \min\{\nicefrac{1}{L}, \vb_0[i]\}\), the volume of \(\setB(\vb_t)\)
  cannot be smaller than \(\setB(\vb_{\min})\) for all iteration \(t\).
  Therefore, the number of times \(\CUT\) is invoked is no more than
  \begin{equation*}
    \log_c\paren{\frac{\Vol(\setB(\vb_0))}{\Vol(\setB(\vb_{\min}))}}
     = \log_c\paren{\prod_{i = 1}^d \frac{\vb_0[i]}{\vb_{\min}[i]}}
     \leq \log_c((\norm{\vb_0}_{\infty} L)^d)
     = d \log_c(\norm{\vb_0}_{\infty} L).
  \end{equation*}
  as desired.
\end{proof}

\subsection{Axis-aligned ellipsoids}
\label{apx:subsec-set-ellipsoid}

We now analyze the cutting-plane method using axis-aligned ellipsoids.
Interestingly, the results that we prove in this sections are connected
to some of the results from~\citet{GoemansHIM09a} via polarity theory.
We defer a discussion on this connection to the end of this section.

Different from the main body, it will be helpful for the analysis of the
method and proofs of the results to not restrict ellipsoids to the
non-negative orthant, as was done in the main text for ease of
exposition. For any symmetric positive definite matrix \(\mA \in \R^{d
\times d}\), define the \emph{ellipsoid} given by \(\mA\) by
\begin{equation*}
  \setE(\mA) \coloneqq \{ \vx \in \R^d \colon \lin{\vx, \mA \vx} \leq 1 \}.
\end{equation*}
When \(\mA\) is diagonal, we say that \(\setE(\mA)\) is
\emph{axis-aligned}. Moreover, we may slightly overload our notation by defining \(\setE(\va) \coloneqq \setE(\Diag(\va))\).

\begin{figure}[t]
  \centering
  \label{fig:ellipsoid_1}
  \includegraphics[width=0.45\textwidth]{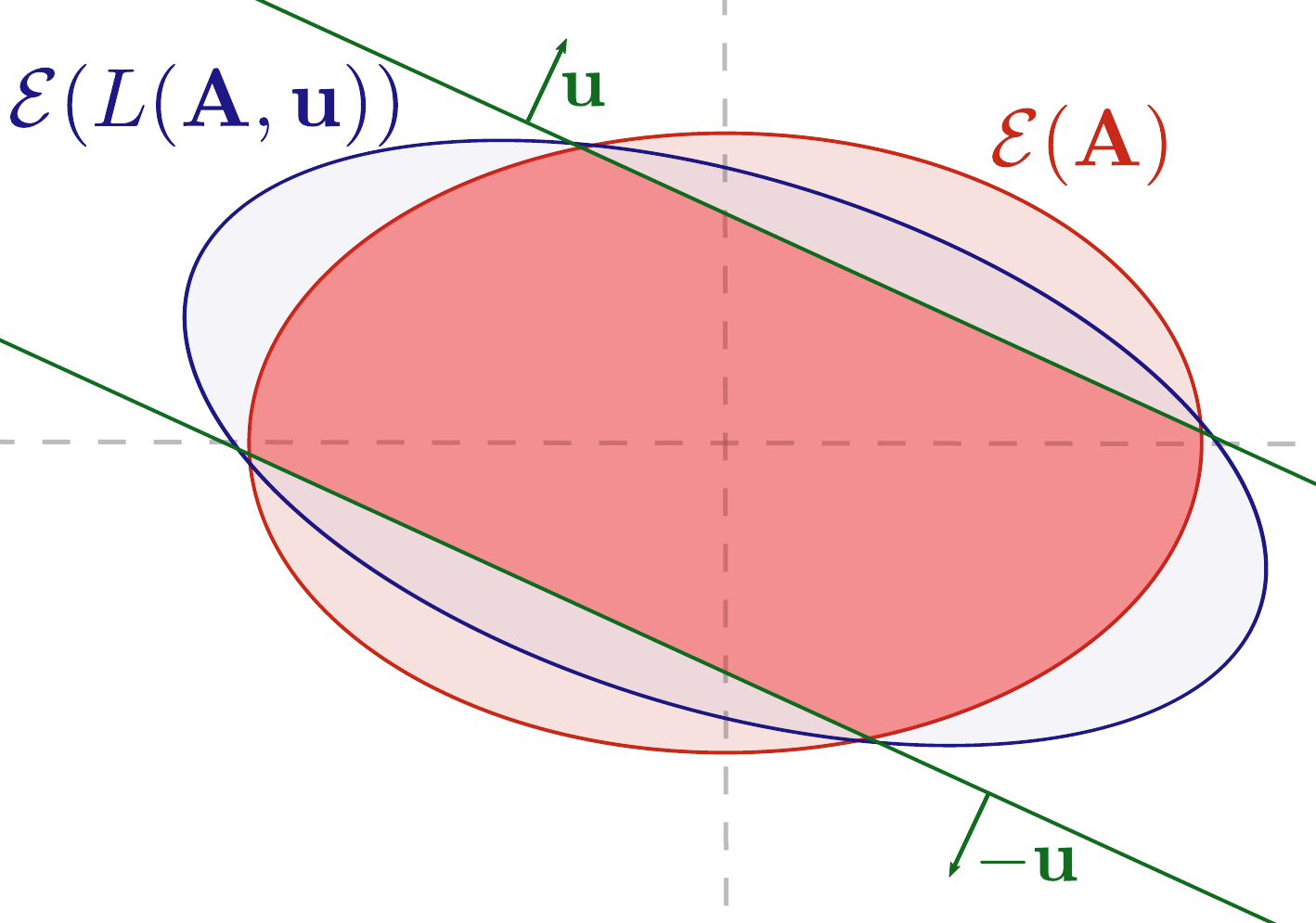}\hfill%
  \includegraphics[width=0.45\textwidth]{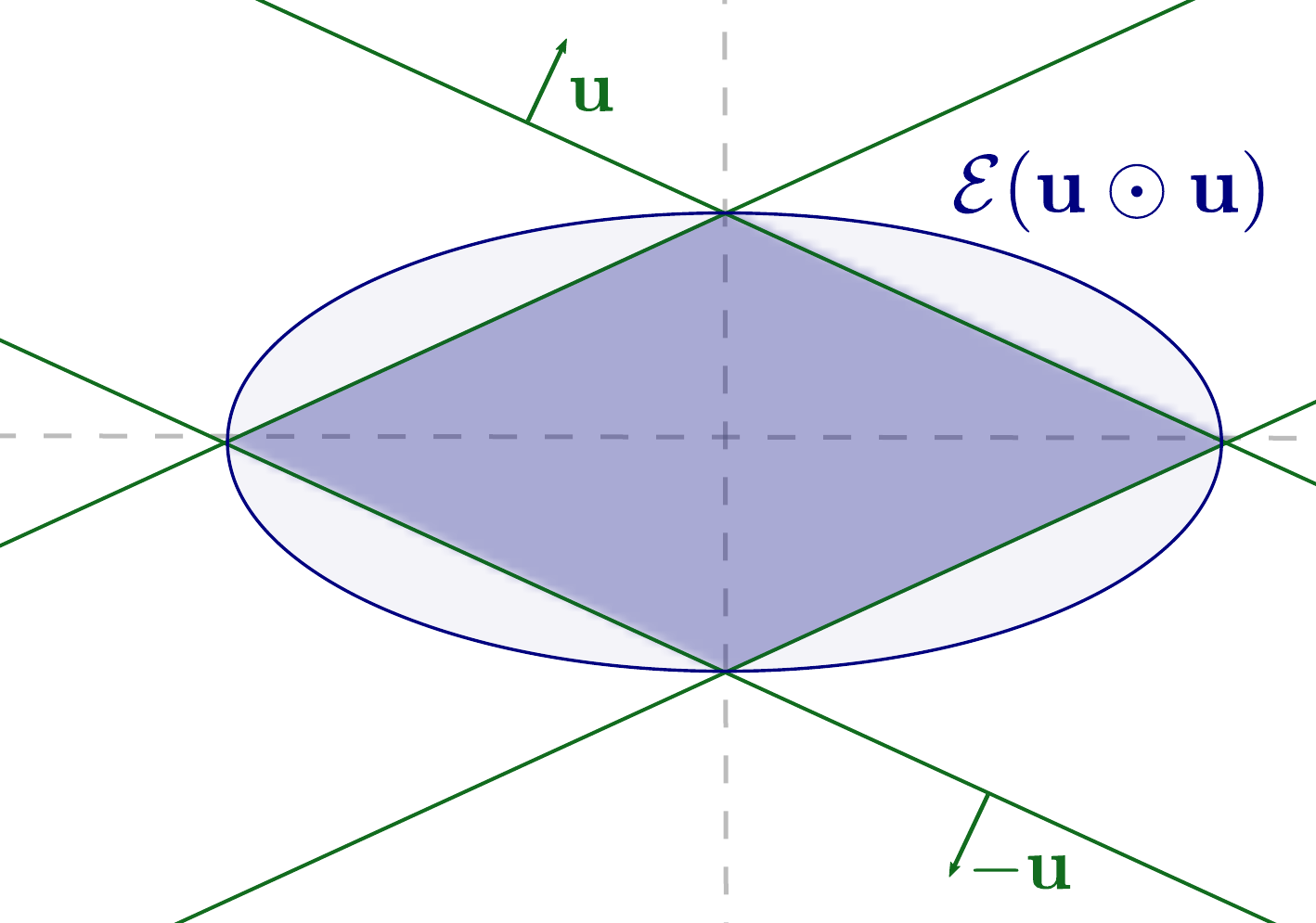}
  \caption{Illustration of the ellipsoids in \cref{thm:non-sym-ellipsoid} in the left. In the right an illustration of the symmetrized intersection of halfspaces \(\setS(\vu)\) used in the proof of~\cref{lem:lemlem} together with the ellipsoid \(\setE(\vu \odot \vu)\) used in the convex combination in the lemma.}
  \end{figure}

{\bf General ellipsoids.} 
Although we are ultimately interested in working solely with ellipsoids defined by diagonal matrices, 
we will start by looking at more general ellipsoids, 
and then exploit symmetry in our case to derive the result in~\cref{lem:lemlem}.
We start with an ellipsoid \(\setE(\mA)\) where \(\mA\) is a positive definite matrix. 
Then, given a vector \(\smash{\vu \in \R^d}\), 
we are interested in finding an ellipsoid the intersection of \(\setE(\mA)\) 
with the half-spaces defined by \(\vu\) and \(-\vu\) that contain the origin, that is, the set
\begin{equation*}
  \setE(\mA) \cap \setst{\vx \in \R^d}{\lin{\vx, \vu} < 1} \cap \setst{\vx \in \R^d}{- \lin{\vx, \vu} < 1} 
  =\setE(\mA) \cap \setst{\vx \in \R^d}{|\lin{\vx, \vu}| < 1}.
\end{equation*}
The following theorem shows how to find an ellipsoid that contains the above intersection, 
and how to guarantee its volume is smaller than \(\setE(\mA)\) if \(\vu\) is large enough. 
Interestingly, note that
\begin{equation*}
  \setst{\vx \in \R^d}{|\lin{\vx, \vu}| < 1} = \setst{\vx \in \R^d}{(\lin{\vx, \vu})^2 < 1} =  \setE(\vu \vu^{\transp}).
\end{equation*}
The set \(\smash{\setE(\vu \vu^{\transp})}\) is a degenerate ellipsoid,
in the sense that it is not a compact set, 
and any $\vp$ orthogonal to $\vu$ is contained in $\smash{\setE(\vu\vu^\top)}$.
Still, the next theorem shows how to find a convex combination
of \(\setE(\mA)\) and \(\setE(\vu \vu^{\transp})\)---which always
contains \(\setE(\mA) \cap \setE(\vu \vu^{\transp})\)---that is
guaranteed to have volume smaller than \(\setE(\mA)\) if \(\vu\) is large enough. 
The following result can be seen
as the polar result of \citet[Lemma~2]{GoemansHIM09a}.
\begin{theorem}
  \label{thm:non-sym-ellipsoid}
  Let \(\mA \in \R^{d \times d}\) be positive definite and let \(\vu \in
  \R^d\). 
  Let \(\lambda \in (0,1)\) and define
  \begin{equation*}
    L(\mA, \vu) \coloneqq \lambda \mA + (1 - \lambda) \vu \vu^\transp.
  \end{equation*}
  Then \(\setE(\mA)\cap \setE(\vu \vu^{\transp})  \subseteq \setE(L(\mA, \vu)) \) and 
  \begin{equation*}
    \Vol(\setE(L(\mA, \vu))) = \sqrt{\frac{\lambda}{\lambda + (1 - \lambda)\cdot \ell} \cdot \frac{1}{\lambda^d}} \cdot \Vol(\setE(\mA))
  \end{equation*}
  In particular, if \(\ell \coloneqq \norm{\vu}_{\mA^{-1}}^2 > d\) and
  \begin{equation}
    \label{eq:opt_lambda_ellipsoid}
    \lambda = \frac{\ell}{d} \cdot \frac{d-1}{\ell-1},
  \end{equation}
  then \(\lambda \in (0,1)\) and 
  \(\Vol(\setE(L(\mA, \vu))) = \nu_d(\vu) \Vol(\setE(\mA))\) where 
  \begin{equation}
    \label{eq:def_nu_d}
    \nu_d(\vu) = \sqrt{\frac{1}{\lambda^d} \cdot \frac{d-1}{\ell-1}} = \parens{\frac{d}{\ell}}^{d/2} \parens{\frac{\ell-1}{d-1}}^{(d-1)/2} \in (0,1).
  \end{equation}
\end{theorem}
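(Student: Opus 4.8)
The plan is to prove Theorem~\ref{thm:non-sym-ellipsoid} by a direct computation, exploiting the fact that the ellipsoids involved share a common structure once we work in the coordinate system diagonalizing $\mA^{-1/2}\vu\vu^\transp\mA^{-1/2}$. First I would establish the containment $\setE(\mA)\cap\setE(\vu\vu^\transp)\subseteq\setE(L(\mA,\vu))$, which is essentially immediate: for any $\vx$ in the intersection we have $\lin{\vx,\mA\vx}\le 1$ and $\lin{\vx,\vu\vu^\transp\vx}=(\lin{\vx,\vu})^2\le 1$, so taking the convex combination with weights $\lambda$ and $1-\lambda$ gives $\lin{\vx,L(\mA,\vu)\vx}=\lambda\lin{\vx,\mA\vx}+(1-\lambda)(\lin{\vx,\vu})^2\le\lambda+(1-\lambda)=1$. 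This shows $\vx\in\setE(L(\mA,\vu))$ and uses only $\lambda\in(0,1)$.

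\textbf{Volume computation.} The key identity is $\Vol(\setE(\mB))=\Vol(B_2^d)/\sqrt{\det\mB}$ for positive definite $\mB$, so the volume ratio is $\sqrt{\det\mA/\det L(\mA,\vu)}$. I would factor $\det L(\mA,\vu)=\det(\lambda\mA+(1-\lambda)\vu\vu^\transp)=\det(\mA)\det(\lambda\mI+(1-\lambda)\mA^{-1}\vu\vu^\transp)$ and then apply the matrix determinant lemma (rank-one update): $\det(\lambda\mI+(1-\lambda)\mA^{-1}\vu\vu^\transp)=\lambda^d\det(\mI+\tfrac{1-\lambda}{\lambda}\mA^{-1}\vu\vu^\transp)=\lambda^d\bigl(1+\tfrac{1-\lambda}{\lambda}\vu^\transp\mA^{-1}\vu\bigr)=\lambda^{d-1}\bigl(\lambda+(1-\lambda)\ell\bigr)$, writing $\ell=\norm{\vu}_{\mA^{-1}}^2=\vu^\transp\mA^{-1}\vu$. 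Hence
\[
\frac{\Vol(\setE(L(\mA,\vu)))}{\Vol(\setE(\mA))}=\sqrt{\frac{\det\mA}{\det L(\mA,\vu)}}=\sqrt{\frac{1}{\lambda^{d-1}(\lambda+(1-\lambda)\ell)}}=\sqrt{\frac{\lambda}{\lambda+(1-\lambda)\ell}\cdot\frac{1}{\lambda^d}},
\]
which is the claimed formula. Plugging in $\lambda=\tfrac{\ell}{d}\cdot\tfrac{d-1}{\ell-1}$ I would simplify the factor $\tfrac{\lambda}{\lambda+(1-\lambda)\ell}$: a short algebraic manipulation shows $\lambda+(1-\lambda)\ell=\ell\cdot\tfrac{d-1}{d}\cdot\tfrac{1}{\ell-1}\cdot(\text{something})$; more cleanly, $\tfrac{\lambda}{\lambda+(1-\lambda)\ell}$ collapses so that the whole square root becomes $\sqrt{\tfrac{1}{\lambda^d}\cdot\tfrac{d-1}{\ell-1}}$, and substituting $\lambda$ once more yields $(d/\ell)^{d/2}\bigl((\ell-1)/(d-1)\bigr)^{(d-1)/2}$, i.e.\ $\nu_d(\vu)$.

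\textbf{The main obstacle} is verifying that $\lambda\in(0,1)$ and that $\nu_d(\vu)\in(0,1)$ when $\ell>d$. For $\lambda\in(0,1)$: since $\ell>d\ge 1$, both $\ell/d>1$ and $\tfrac{d-1}{\ell-1}<1$, and one checks $\lambda=\tfrac{\ell(d-1)}{d(\ell-1)}<1\iff \ell(d-1)<d(\ell-1)\iff -\ell<-d\iff \ell>d$, which holds; positivity is clear. For $\nu_d(\vu)<1$, the natural route is to set $t=\ell/d>1$ and show $g(t):=(1/t)^{d/2}\bigl(\tfrac{td-1}{d-1}\bigr)^{(d-1)/2}<1$, equivalently $\tfrac{d-1}{2}\log\tfrac{td-1}{d-1}<\tfrac{d}{2}\log t$; this follows from strict concavity of $\log$ applied to the representation $\tfrac{td-1}{d-1}$ as a convex combination of $t$ (weight $\tfrac{d-1}{d}$, wait---rather) of the points $t$ and something, or more simply by checking $g(1)=1$ and $g'(t)<0$ for $t>1$. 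I expect this concavity/monotonicity check to be the only genuinely delicate part; everything else is bookkeeping with the determinant lemma. The connection to \citet[Lemma~2]{GoemansHIM09a} via polarity, mentioned in the statement, can be noted as a remark but is not needed for the proof itself.
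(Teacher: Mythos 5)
Your proposal is correct and follows essentially the same route as the paper: containment by convex combination of the two quadratic constraints, then the matrix determinant lemma to compute $\det L(\mA,\vu) = \lambda^{d-1}\bigl(\lambda + (1-\lambda)\ell\bigr)\det\mA$ and hence the volume ratio, then direct substitution of the chosen $\lambda$. The only place where you diverge slightly is the claim $\nu_d(\vu) \in (0,1)$: you sketch a derivative/monotonicity check of $g(t) = (1/t)^{d/2}\bigl(\tfrac{td-1}{d-1}\bigr)^{(d-1)/2}$ but leave it incomplete (and the aborted convex-combination idea mid-paragraph does not work as stated, since $\tfrac{td-1}{d-1} = \tfrac{d}{d-1}t - \tfrac{1}{d-1}$ is an affine but not a convex combination of $t$ and $1$). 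The derivative check does go through: $g'(t) < 0$ for $t>1$ with $g(1)=1$. The paper itself defers this bound to a separate lemma, where it is obtained more slickly from $\nu_d(\vu)^2 = c\bigl(1 + \tfrac{1-c}{d-1}\bigr)^{d-1} \le c\, e^{1-c} < 1$ for $c = d/\ell \in (0,1)$, using $1+x \le e^x$; that route avoids differentiation and also yields the explicit quantitative bound used downstream, so if you want the constant $\nicefrac{\sqrt[4]{e}}{\sqrt{2}}$ you should adopt it.
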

\begin{proof}
  First, note that for any \(\vp \in \setE(\mA)\cap \setE(\vu \vu^{\transp})\) and any \(\lambda \in (0,1)\) we have 
  \begin{equation*}
    \lin{\vp, L(\mA, \vu) \vp}
    =  \lambda \lin{\vp, \mA \vp}
    + (1 - \lambda) \lin{\vp, \vu} \leq \lambda + (1 - \lambda) = 1.
  \end{equation*}
  Thus, \(\setE(L(\mA, \vu)) \subseteq \setE(\mA) \cap \setE(\vu \vu^{\transp})\). For
  the volume decrease, recall that for ellipsoids \(\setE(\mA)\) we have
  \(\smash{\Vol(\setE(\mA)) = \nicefrac{V_d}{\sqrt{\det(\mA)}}}\) where \(V_d\) is the
  volume of the unit sphere in \(\R^d\). By the matrix-determinant
  lemma, we have
  \begin{equation*}
    \det(L(\mA, \vu)) = \parens{1 + \frac{1 - \lambda}{\lambda}\cdot \lin{\vu, \mA^{-1} \vu}} \det(\lambda \mA) = \parens{1 + \frac{1 - \lambda}{\lambda}\cdot \ell}\lambda^d \det(\mA).
  \end{equation*}
  Therefore, 
  \begin{equation*}
    \Vol(\setE(L(\mA, \vu))) = \sqrt{\frac{1}{\parens{1 + \frac{1 - \lambda}{\lambda}\cdot \ell}} \cdot \frac{1}{\lambda^d}} \cdot \Vol(\setE(\mA))
    = \sqrt{\frac{\lambda}{\lambda + (1 - \lambda)\cdot \ell}} \cdot \frac{1}{\lambda^d} \cdot \Vol(\setE(\mA)).
  \end{equation*}
  Finally, for \(\lambda\) defined as in~\eqref{eq:opt_lambda_ellipsoid}
  we have
  \begin{align*}
    1 + \frac{1 - \lambda}{\lambda}\cdot \ell
    &= 1 + \parens{1 - \frac{\ell (d-1)}{d (\ell -1)}} \frac{d (\ell -1)}{\ell (d-1)} \cdot \ell
    &&=  1 + \parens{\frac{d (\ell -1)}{\ell (d-1)} - 1} \cdot \ell,
    \\
    &=  1 + \parens{\frac{d (\ell -1) - \ell(d-1)}{\ell (d-1)}} \cdot \ell
    &&= 1 + \frac{\ell - d}{d - 1} = \frac{\ell -1}{d -1},
  \end{align*}
  which yields the desired formula for \(\nu_d(\vu)\).
\end{proof}

{\bf On the norm of \(\vu\).} The above theorem has a requirement
on the norm of the vector \(\vu\) that defines the half-space
\(\setH_{\leq}(\vu)\). However, in our cutting plane method we obtain
\(\vu\) from \cref{lem:starting-hyperplane-main} and
\cref{lem:stronger-hp}, which do not have any guarantees on the norm of
\(\vu\) explicitly. Crucially, at any given iteration \(t\) of
multidimensional backtracking with ellipsoids, we select a candidate
preconditioner \(\mPt = \Diag(\vp_t)\) such that \(\norm{\vp_t}_{\mA} =
\nicefrac{1}{\sqrt{2d}}\). Then, if it fails the Armijo condition
in~\eqref{eq:sufficient-progress} and \(\vu_t\) is as given by
\cref{lem:starting-hyperplane-main}, then we have \(\vp_t \in
\setH_{>}(\vu_t)\), that is, the separating hyperplane excludes~\(\vp_t\). As we will show, this implies that \(\norm{\vu}_{\mA^{-1}}\)
is large.

\begin{lemma}
  \label{lem:bound_norm_u}
  Let \(\mA \in \R^{d \times d}\) be positive definite and \(\vp \in \Rnn^d\) be such that \(\norm{\vp}_{\mA} \leq \gamma\) for some \(\gamma > 0\). Let \(\vu \in \Rnn^d\) be such that \(\vp \in \setH_{>}(\vu)\). Then \(\norm{\vu}_{\mA^{-1}} > \nicefrac{1}{\gamma}\).
\end{lemma}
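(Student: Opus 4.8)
The plan is to prove this by a direct application of the Cauchy--Schwarz inequality in the inner product twisted by $\mA$. Write the defining inequality of the half-space: since $\vp \in \setH_{>}(\vu)$ we have $\lin{\vu, \vp} > 1$. Now factor the standard inner product through $\mA^{1/2}$ and $\mA^{-1/2}$, writing $\lin{\vu,\vp} = \lin{\mA^{-1/2}\vu,\, \mA^{1/2}\vp}$, which is valid because $\mA$ is positive definite and hence $\mA^{1/2}$ is well-defined and symmetric.

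Next I would apply Cauchy--Schwarz to this factored expression:
\aligns{
  1 < \lin{\vu, \vp}
  = \lin{\mA^{-1/2}\vu,\, \mA^{1/2}\vp}
  \leq \norm{\mA^{-1/2}\vu} \cdot \norm{\mA^{1/2}\vp}
  = \norm{\vu}_{\mA^{-1}} \cdot \norm{\vp}_{\mA},
}
where the last equality is just unfolding the definitions $\norm{\vu}_{\mA^{-1}}^2 = \lin{\vu, \mA^{-1}\vu}$ and $\norm{\vp}_{\mA}^2 = \lin{\vp, \mA\vp}$. Combining with the hypothesis $\norm{\vp}_{\mA} \leq \gamma$ gives $1 < \gamma\,\norm{\vu}_{\mA^{-1}}$, and dividing by $\gamma > 0$ yields the claimed strict inequality $\norm{\vu}_{\mA^{-1}} > \nicefrac{1}{\gamma}$.

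There is essentially no obstacle here; the only things to double-check are that the strictness is preserved (it is, since $1 < \lin{\vu,\vp}$ already before the Cauchy--Schwarz step) and that $\gamma > 0$ so the final division is legitimate (guaranteed by hypothesis). It is worth remarking that the non-negativity of $\vp$ and $\vu$ plays no role in this argument — it is recorded in the statement only because that is the regime in which the lemma is applied inside multidimensional backtracking, where $\vp = \vp_t$ is the candidate preconditioner with $\norm{\vp_t}_{\mA_t} = \gamma = \nicefrac{1}{\sqrt{2d}}$ and $\vu = \vu_t$ is the (truncated, hence non-negative) separating hyperplane from \cref{lem:stronger-hp}. The consequence $\ell = \norm{\vu_t}_{\mA_t^{-1}}^2 > 2d > d$ then feeds directly into the hypothesis of \cref{thm:non-sym-ellipsoid} needed to guarantee volume decrease.
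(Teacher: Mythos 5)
Your proof is correct and uses the same key step as the paper — Cauchy--Schwarz applied to $\lin{\mA^{-1/2}\vu,\,\mA^{1/2}\vp}$ — with the only cosmetic difference being that the paper phrases it as a proof by contradiction while you argue directly. Your side remark that the non-negativity of $\vp$ and $\vu$ is inessential to the inequality is also accurate.
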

\begin{proof}
  For the sake of contradiction,  assume \(\norm{\vu}_{\mA^{-1}} 
  \leq \nicefrac{1}{\gamma}\). Then \(\norm{\vu}_{\mA^{-1}} \cdot \norm{\vp}_{\mA} \leq 1\). Thus, by the Cauchy-Schwartz inequality,
  \begin{equation*}
    \lin{\vu, \vp} = \lin{\mA^{-1/2} \vu, \mA^{1/2} \vp}
    \leq \norm{\mA^{-1/2} \vu} \cdot \norm{\mA^{1/2} \vp} = \norm{\vu}_{\mA^{-1}} \cdot \norm{\vp}_{\mA} \leq 1. 
  \end{equation*}
  This is a contradiction since \(\vp \in \setH_{>}(\vu)\) and, therefore, \(\lin{\vu, \vp} > 1\). 
\end{proof}

{\bf On the volume decrease.}
Although the formula \(\nu_d(\vu)\) in \cref{eq:def_nu_d} can
be hard to interpret, we show a simple bound when
\(\norm{\vu}_{\mA^{-1}}^2 \geq 2d\).

\begin{lemma}
  \label{lem:bound_nu_d}
  Let \(\mA \in \R^{d \times d}\) be a positive definite matrix and \(\vu \in \R^d\) be such that \(\norm{\vu}_{A^{-1}}^2 > d\). For \(c \coloneqq d/\ell \in (0,1)\) we have \(\nu_d(\vu) \leq \sqrt{c \cdot e^{1 - c}}\), where \(\nu_d\) is defined as in~\eqref{eq:def_nu_d}. In particular, if $\norm{\vu}_{A^{-1}}^2 > d$, then \(\nu_d(\vu) \leq \nicefrac{\sqrt[4]{e}}{\sqrt{2}}\).
\end{lemma}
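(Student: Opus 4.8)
The plan is to reduce everything to the single variable $c = d/\ell$ and then invoke one elementary inequality. I would start from the closed form in~\eqref{eq:def_nu_d},
\[
  \nu_d(\vu) = \left(\frac{d}{\ell}\right)^{d/2}\left(\frac{\ell-1}{d-1}\right)^{(d-1)/2},
\]
which is well defined because $\ell > d$ forces $\lambda \in (0,1)$ by \cref{thm:non-sym-ellipsoid}. Substituting $\ell = d/c$ gives $\ell - 1 = (d-c)/c$, hence $\frac{\ell-1}{d-1} = \frac{d-c}{c(d-1)}$, and the powers of $c$ telescope:
\[
  \nu_d(\vu)^2 = c^{d}\left(\frac{d-c}{c(d-1)}\right)^{d-1} = c\left(\frac{d-c}{d-1}\right)^{d-1}.
\]
(For $d = 1$ the statement is trivial since $\nu_d(\vu) = 0$ by~\eqref{eq:def_nu_d}; I would assume $d \ge 2$ so that both displayed forms of $\nu_d$ agree.)

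Next I would write $d - c = (d-1) + (1-c)$, so that $\frac{d-c}{d-1} = 1 + \frac{1-c}{d-1}$ with $\frac{1-c}{d-1} > 0$ since $c \in (0,1)$. Applying the standard bound $(1+t)^{n} \le e^{nt}$ (valid for $t > -1$, $n \ge 0$) with $t = \frac{1-c}{d-1}$ and $n = d-1$ yields $\left(\frac{d-c}{d-1}\right)^{d-1} \le e^{1-c}$, and therefore $\nu_d(\vu)^2 \le c\,e^{1-c}$, i.e.\ $\nu_d(\vu) \le \sqrt{c\,e^{1-c}}$, which is the first claim.

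For the ``in particular'' part I would observe that $\phi(c) \coloneqq c\,e^{1-c}$ has $\phi'(c) = (1-c)e^{1-c} > 0$ on $(0,1)$, so it is increasing there; in the regime relevant to \cref{lem:lemlem} one has $\norm{\vp}_{\mA} \le \nicefrac{1}{\sqrt{2d}}$, so \cref{lem:bound_norm_u} gives $\ell = \norm{\vu}_{\mA^{-1}}^2 > 2d$, i.e.\ $c < \nicefrac12$, and then $\phi(c) < \phi(\nicefrac12) = \nicefrac{\sqrt{e}}{2}$, whence $\nu_d(\vu) \le \sqrt{\phi(c)} < \sqrt{\nicefrac{\sqrt{e}}{2}} = \nicefrac{\sqrt[4]{e}}{\sqrt{2}}$. (The hypothesis ``$\norm{\vu}_{\mA^{-1}}^2 > d$'' stated for this last step should read ``$> 2d$'': when $\ell$ is only slightly larger than $d$, $c$ is near $1$ and $\phi(c)$ near $1$, so no constant bound strictly below $1$ can hold.)

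There is no genuine obstacle here --- the whole content is the algebraic identity $\nu_d(\vu)^2 = c\left(\frac{d-c}{d-1}\right)^{d-1}$ together with $(1+x/n)^n \le e^x$. The only places needing care are bookkeeping the exponents so that the $c^d/c^{d-1}$ cancellation is transparent, handling the degenerate $d=1$ case, and (strictly speaking) correcting the stated hypothesis of the ``in particular'' clause to $\ell > 2d$.
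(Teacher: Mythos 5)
Your proof is correct and follows essentially the same path as the paper's: substitute $c = d/\ell$ to reduce $\nu_d(\vu)^2$ to $c\bigl(1+\tfrac{1-c}{d-1}\bigr)^{d-1}$, bound via $1+x\le e^x$, and use monotonicity of $c\mapsto c\,e^{1-c}$ on $(0,1)$. You are also right that the hypothesis $\norm{\vu}_{\mA^{-1}}^2 > d$ in the ``in particular'' clause is a typo for $\norm{\vu}_{\mA^{-1}}^2 > 2d$; this is what the paper's own proof uses and what \cref{lem:bound_norm_u} supplies with $\gamma = \nicefrac{1}{\sqrt{2d}}$.
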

\begin{proof}
  Define \(\ell \coloneqq \norm{\vu}_{\mA^{-1}}^2 > d\) and \(c \coloneqq \nicefrac{d}{\ell} \in (0,1).\) Then,
  \begin{equation*}
    \begin{alignedat}{4}
      \nu_d(\vu)^2 
      &= \parens{\frac{d}{\ell}}^{d} \parens{\frac{\ell-1}{d-1}}^{(d-1)}
      &&= \frac{d}{\ell} \cdot \paren{\frac{d}{\ell} \cdot \frac{\ell-1}{d -1}}^{(d-1)}
      &&= c \cdot \paren{c \cdot \frac{\nicefrac{d}{c}-1}{d -1}}^{(d-1)}
      \\
      &= c \cdot \paren{\frac{ d - c}{d -1}}^{(d-1)}
      &&= c \cdot \paren{1 + \frac{ 1 - c}{d -1}}^{(d-1)}
      &&\leq c \cdot e^{1 - c},
    \end{alignedat} 
  \end{equation*}
  where the last inequality follows since \(1 + x \leq e^x\) for all \(x \in \R\). 
  In particular, note that \(c \in (0,1) \mapsto c  \cdot e^{1 - c}\) is increasing 
  since the derivative of the mapping is positive on \((0,1)\). 
  Thus, if \(\norm{\vu}_{\mA^{-1}} \geq 2d\), 
  then $c \leq \frac{1}{2}$ 
  and \( \smash{ c \cdot e^{1-c} \leq (\nicefrac{1}{2}) \cdot e^{1/2}}\).
\end{proof}

\paragraph{Exploiting symmetry.} 
Let us now exploit symmetry to avoid using non-diagonal matrices in our
ellipsoids. We use the notion of \emph{axis-aligned} sets in the next
few results. 
A set \(\setX \subseteq \R^d\) is \emph{axis-aligned} if
for any point $\vp \in \setX$, the reflections of $\vp$ along the axes are also contained in $\setX$. Formally, 
for any \(\vs \in \{\pm 1\}^d\),  we have that if $\vp \in \setX$, then $\Diag(\vs) \vp \in \setX$.
Furthermore, with a slight abuse of
notation define \(\Diag(\mA) \coloneqq \Diag(\diag(\mA))\). That is,
\(\Diag(\mA)\) is the diagonal matrix whose diagonal entries match those
of \(\mA\). The idea is that the set \(\setst{\vp \in
\Rnn^d}{\Diag(\vp)~\text{is valid}}\) of diagonals of valid
preconditioners is contained in the non-negative orthant. 
Yet, we can extend it by reflecting it over each of the axes. 
Although this may seem counter-intuitive, 
this translates the structure of our problem into symmetry among all orthant, 
and this can be exploited elegantly.
Formally, 
the set of diagonals of valid preconditioners reflected over each axis is given by
set
\begin{equation*}
  \setP \coloneqq \setst{\vp \in \R^d}{ \Diag(\abs{\vp})~\text{is valid}},
\end{equation*}
where \(\abs{\vp}\) is the entry-wise absolute value of \(\vp \in
\R^d\). The following lemma shows that when looking for low volume
ellipsoids that contain an axis-aligned set, we can restrict out
attention to axis-aligned ellipsoids, defined by a diagonal matrix. The
following lemma can be seen as the polar statement of
\citet[Proposition~3.1]{GoemansHIM09a}, with the benefit of not
requriring any matrix inversions.
\begin{lemma}
  \label{lem:symm_ellipsoids}
  Let \(\setX \subset \R^d\) be an axis-aligned convex set and let \(\mA
  \in \R^{d \times d}\) be positive definite matrix such that \(\setX \subseteq \setE(\mA)\). Then \(\setX \subseteq \setE(\Diag(\mA))\) and \(\Vol(\setE(\Diag(\mA))) \leq \Vol(\setE(\mA))\).
\end{lemma}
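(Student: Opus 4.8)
The plan is to treat the two assertions separately: the volume bound reduces to Hadamard's inequality, and the containment follows by averaging the inclusion $\setX \subseteq \setE(\mA)$ over all coordinate sign flips, using only the axis-alignment of $\setX$. For the volume bound, I would first note that since $\mA$ is positive definite all of its diagonal entries are strictly positive, so $\Diag(\mA)$ is positive definite and $\setE(\Diag(\mA))$ is a genuine ellipsoid; then, using $\Vol(\setE(\mB)) = V_d/\sqrt{\det\mB}$ for positive definite $\mB$ (with $V_d$ the volume of the unit ball in $\R^d$), the inequality $\Vol(\setE(\Diag(\mA))) \le \Vol(\setE(\mA))$ becomes equivalent to $\det(\mA) \le \prod_i \mA_{ii} = \det(\Diag(\mA))$, which is exactly Hadamard's inequality for positive semidefinite matrices.

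For the containment, I would fix a sign vector $\vs \in \{\pm 1\}^d$ and set $\mS \coloneqq \Diag(\vs)$, which is symmetric and satisfies $\mS^2 = \mI$. Since $\lin{\vy, \mS\mA\mS\,\vy} = \lin{\mS\vy, \mA\,\mS\vy}$ for every $\vy$, one has $\setE(\mS\mA\mS) = \mS\,\setE(\mA)$; together with the axis-alignment identity $\mS\setX = \setX$ this yields $\setX = \mS\setX \subseteq \mS\,\setE(\mA) = \setE(\mS\mA\mS)$, and since $\vs$ was arbitrary, $\setX \subseteq \bigcap_{\vs \in \{\pm 1\}^d}\setE(\mS\mA\mS)$. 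Taking any $\vx$ in this intersection gives $\lin{\vx, \mS\mA\mS\,\vx}\le 1$ for all $2^d$ sign patterns; averaging these inequalities and using
\[
  \frac{1}{2^d}\sum_{\vs \in \{\pm 1\}^d}\mS\mA\mS = \Diag(\mA)
\]
(the $(i,j)$ entry of the left-hand side is $\mA_{ij}\cdot 2^{-d}\sum_{\vs}\vs[i]\vs[j]$, which equals $\mA_{ii}$ when $i = j$ and $0$ otherwise) then gives $\lin{\vx, \Diag(\mA)\,\vx}\le 1$, i.e.\ $\vx \in \setE(\Diag(\mA))$. Hence $\setX \subseteq \setE(\Diag(\mA))$.

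I do not expect a genuine obstacle here: the whole argument is a short symmetrization combined with Hadamard's inequality. The two points that need to be checked carefully are the identity $\setE(\mS\mA\mS) = \mS\,\setE(\mA)$ and the averaging identity for $2^{-d}\sum_{\vs}\mS\mA\mS$; it is also worth noting that convexity of $\setX$ is not actually used for this particular statement — only its axis-alignment is needed.
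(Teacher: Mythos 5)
Your proof is correct and takes essentially the same approach as the paper: symmetrize over the $2^d$ sign-flips, use axis-alignment to get $\setX\subseteq\setE(\Diag(\vs)\mA\Diag(\vs))$ for all $\vs$, and exploit the identity $\tfrac{1}{2^d}\sum_{\vs}\Diag(\vs)\mA\Diag(\vs)=\Diag(\mA)$. The only cosmetic differences are that you average the quadratic inequalities $\lin{\vx,\mS\mA\mS\vx}\le 1$ directly rather than invoking the fact that an intersection of ellipsoids lies in the ellipsoid of the averaged matrix, and that you cite Hadamard's inequality for the volume bound where the paper re-derives it from concavity of $\log\det$ applied to the same sign-flip average; your aside that convexity of $\setX$ is not actually used is also correct.
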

\begin{proof}
Let us start by showing that  \(\setX \subseteq \setE(\Diag(\mA))\).
We use the notation \(\Diag(\vv) \cdot \setX\)
to denote the set \(\Diag(\vv) \cdot \setX \coloneqq \setst{\Diag(\vv) \cdot \vx}{\vx \in \setX} \). 
Since \(\setX\) is axis-aligned, we have 
\begin{equation*}
  \setX = \Diag(\vs) \cdot \setX \subseteq \Diag(\vs) \cdot \setE(\mA)
  = \setE(\Diag(\vs) \mA \Diag(\vs)), \qquad \forall \vs \in \{\pm 1\}^d.
\end{equation*}
Therefore, \(\setX\) is contained in each of the \(2^d\) ellipsoids of the form \(\setE(\Diag(\vs) \mA \Diag(\vs))\). Thus,
\begin{equation*}
  \setX \subseteq \bigcap_{\vs \in \{\pm 1\}^d} \setE(\Diag(\vs) \mA \Diag(\vs)) \subseteq \setE\bigg(\frac{1}{ 2^d} 
  \sum_{\vs \in \{\pm 1\}^d} \Diag(\vs) \mA \Diag(\vs) \bigg),
\end{equation*}
where the last inclusion follows since, for any set of positive definite matrices \(\setM\), one may verify that \(\cap_{\mM \in \setM} \setE(\mM) \subseteq \setE((\nicefrac{1}{|\setM|}) \sum_{\mM \in \setM}\mM)\).
 Finally, note that
\begin{equation*}
  \sum_{\vs \in \{\pm 1\}^d} \Diag(\vs) \mA \Diag(\vs) = \Diag(\mA).
\end{equation*}
Indeed, let \(i,j \in \{1, \cdots, d\}\). If \(i = j\), then
\((\Diag(\vs) \mA \Diag(\vs))_{i,j} = \mA_{i,j}\) for any \(\vs \in \{\pm 1\}^d\). If \(i \neq j\), then
\begin{align*}
  &\sum_{\vs \in \{\pm 1\}^d} (\Diag(\vs) \mA \Diag(\vs))_{i,j}
  \\
  &= \sum_{\vs \in \{\pm 1\}^d \colon \vs[i] \neq \vs[j]} (\Diag(\vs) \mA \Diag(\vs))_{i,j} + 
  \sum_{\vs \in \{\pm 1\}^d \colon \vs[i] = \vs[j]} (\Diag(\vs) \mA \Diag(\vs))_{i,j}
  \\
  &= 2^{d-1} \cdot (-\mA_{i,j}) + 2^{d-1} \cdot \mA_{i,j} = 0.
\end{align*}
Let us now show that \(\Vol(\setE(\Diag(\mA))) \leq \Vol(\setE(\mA))\). Note that
\(\log(\Vol(\setE(\mA))) = \log(\Vol(\setE(\mI))) - \tfrac{1}{2}\log
\det(\mA)\). Since \(\log \det(\cdot)\) is concave over positive definite
matrices, we have
\begin{align*}
  \log \det (\Diag(\mA)) 
  &=  \log \det\Bigl(\frac{1}{ 2^d} 
  \sum_{\vs \in \{\pm 1\}^d} \Diag(\vs) \mA \Diag(\vs)  \Bigr)
  \\
  &\geq 
  \frac{1}{ 2^d} \sum_{\vs \in \{\pm 1\}^d} 
  \log \det\Bigl( 
   \Diag(\vs) \mA \Diag(\vs)  \Bigr)
  = 
  \frac{1}{ 2^d} \cdot 2^d
  \log \det(\mA)
  = \log \det(\mA).
\end{align*}
Therefore,
\begin{align*}
  \log(\Vol(\setE(\Diag(\mA))))
  &= \log(\Vol(\setE(\mI))) - \tfrac{1}{2}\log
  \det(\Diag(\mA))
  \\
  &\leq \log(\Vol(\setE(\mI))) - \tfrac{1}{2}\log
  \det(\mA)
  \\
  &=\log(\Vol(\setE(\mA))),
\end{align*}
which implies that \(\Vol(\setE(\Diag(\mA))) \leq \Vol(\setE(\mA))\).
\end{proof}

We are now in position to prove \cref{lem:lemlem}, which follows directly from the previous two results.

\begin{proof}[Proof of~\cref{lem:lemlem}]
  By the assumptions in \cref{lem:starting-hyperplane-main} we have that
  \(\mP \coloneqq \Diag(\vp)\) fails the Armijo condition \ref{eq:sufficient-progress} condition
  and, thus, \(\smash{\vp \in \setH_{>}(\vu)}\). 
  This together with the assumption that 
  \(\smash{\norm{\vp}_{\mA} \leq \nicefrac{1}{\sqrt{2d}}}\)
  imply via \cref{lem:bound_norm_u} that 
  \(\smash{ \norm{\vu}_{\mA^{-1}} \geq \sqrt{2d}} \). 
  This allows us to use \cref{thm:non-sym-ellipsoid} 
  to find a new ellipsoid containing \(\setE(\va) \cap \setH_{\leq}(\vu)\) 
  with the required volume decrease
  by \cref{lem:bound_nu_d}. 
  Yet, this ellipsoid may not be axis-aligned. 
  We shall exploit the symmetry described in \cref{lem:symm_ellipsoids}
  to show that the axis-aligned ellipsoid \(\setE(\va^+(\va, \vu))\) enjoys the same guarantees.

  Formally, we need \(\setE(\va^+(\va, \vu))\) to contain
  \(\smash{\setE(\va)\cap \setH_{\leq}(\vu)}\). 
  Since \(\vu \geq 0\), we have
  \begin{equation*}
    \setH_{\leq}(\vu) \subseteq \setS(\vu) \coloneqq \setst{\vp \in \R}{\Diag(\vs) \cdot \vp \in \setH_{\leq}(\vu)~\text{for all}~\vs \in \{\pm 1\}^d}.
  \end{equation*}
  Thus, it suffices for \(\setE(\va^+(\va, \vu))\) to
  contain \(\setE(\va) \cap \setS(\vu)\).
  From \cref{thm:non-sym-ellipsoid} we know that \(\setE(\va) \cap \setS(\vu)\) is contained in the ellipsoid given by the matrix \(\lambda \Diag(\va) +  (1 - \lambda) \vu \vu^\transp\) for any \(\lambda\), in particular for \(\lambda\) as in~\eqref{eq:opt_lambda_ellipsoid} since \(\smash{\norm{\vu}_{\mA^{-1}} > \sqrt{d}}\). 
  Since \(\setS(\vu)\) is axis-aligned, we can exploit symmetry using
  \cref{lem:symm_ellipsoids}, which tells that
  \(\setE(\va) \cap \setS(\vu)\) is contained in the ellipsoid given by the matrix
  \begin{equation*}
    \Diag\Big(\lambda \Diag(\va) +  (1 - \lambda) \vu \vu^\transp\Big) = \Diag(\va^+(\va, \vu)),
  \end{equation*}
  as desired. Finally, the bound on the volume follows by \cref{thm:non-sym-ellipsoid} and the bound on \(\nu_d(\vu)\) given by \cref{lem:bound_nu_d} since \(\smash{\norm{\vu}_{\mA^{-1}} \geq \sqrt{2d}}\).
\end{proof}

Finally, we are in position to prove \cref{thm:main_thm_ellipsoid}, 
which follows almost directly from~\cref{lem:lemlem}.

\begin{proof}[Proof of \cref{thm:main_thm_ellipsoid}]
  Note that (a) holds by induction and since, by
  \cref{lem:starting-hyperplane-main}, we have \(\diag(\mP^*) \in
  \setH_{\leq}(\vu_t)\) for any \(\vu_t\) used in a call to \(\CUT\).
  For (b), fix \(t \in \{1, \dotsc, T\}\) and recall that in this case
  we have \(\setS_t = \setst{\Diag(\vp)}{ \vp \in \setE(\va_t)}\). As
  described in~\eqref{eq:max_over_ellipsoid}, one may verify that
  \(\Diag(\vq_t^*)\) for \(\vq_t^*\) given by
  \begin{equation*}
    \vq_t^* \coloneqq \frac{1}{\norm{\nabla f(\vx_t)^2}_{\mA_t^{-1}}} \cdot \mA_t^{-1} \nabla f(\vx_t)^2
  \end{equation*}
  maximizes \(\norm{\nabla f(\vx_t)}_{\mP}\) for \(\mP \in \setS_t\). Since
  \begin{equation*}
    \mPt = \CANDIDATE(\setS_t, \nicefrac{1}{\sqrt{2d}}, \vx_t) = \frac{1}{\sqrt{2d}} \Diag(\vq_t^*),
  \end{equation*}
  we conclude that \(\mPt\) is \(\nicefrac{1}{\sqrt{2d}}\)-competitive.
  For (c), first note that we may assume \((\nicefrac{1}{L})\ones \in
  \setE(\alpha_0 I)\). 
  To see that, assume \((\nicefrac{1}{L}) \ones \not \in \setE(\alpha_0 I)\), 
  implying \(\alpha_0 d > L^2\). 
  In this case, any candidate preconditioner computed by
  \(\CANDIDATE\) is always valid and, thus, we never call $\CUT$.
  To see this, let \(\mA_0 \coloneqq \alpha_0 \mI\) be the matrix defining the initial ellipsoid. Then, by the definition of
  \(\CANDIDATE\) for ellipsoids we have that \(\mP_0 = \Diag(\vp_0)\) is
  such that
  \begin{align*}
    \norm{\vp_0}_{\mA_0}^2 = \alpha_0\norm{\vp_0}^2 = \frac{1}{2d}
    < \frac{1}{2} \frac{\alpha_0}{L^2}.
  \end{align*}
  Therefore, \(\vp_0[i] \leq \nicefrac{1}{L}\) for all \(i \in \{1,
  \dotsc, d\}\), which implies that \(\mP_0\) is valid since \(\mP_0 \preceq \tfrac{1}{L} \mI\).

  Let us look now at the case \((\nicefrac{1}{L})\ones \in
  \setE(\alpha_0 I)\). Therefore, \(\setB(\nicefrac{1}{L} \ones)
  \subseteq \setE(\va_t)\) for all iterations \(t\). Since the minimum
  volume ellipsoid containing the box \(\setB((\nicefrac{1}{L}) \ones)\)
  is the unit sphere of radius \(\nicefrac{1}{L}\), that is,
  \(\setE((\nicefrac{L^2}{d}) \ones)\). Therefore, \(\Vol(\setE(\va_t))
  \geq \Vol(\setE((\nicefrac{L^2}{d}) \ones))\). Moreover, every time we call
  cut the volume of the ellipsoid goes down by \(1/c \coloneqq
  \sqrt[4]{e}/\sqrt{2}\). Therefore, the total number of calls to \(\CUT\) is no more than 
  \begin{equation*}
    \log_c\paren{\frac{\Vol(\setE(\alpha_0 \ones))}{\Vol(\setE((\nicefrac{L^2}{d})\ones ))}} 
    = \log_c\paren{\frac{L^d}{d \alpha_0^{d/2}}}
    \leq \frac{d}{\log(c)}
    \log\paren{\frac{L}{d \alpha_0}}
    \leq 12 d \log\paren{\frac{L}{\alpha_0}}
  \end{equation*}
  since \(\log(c) \geq 1/12\). 
\end{proof}

{\bf Refining the choice of \(\lambda\).} 
Although we have shown in~\cref{lem:lemlem} a choice a \(\lambda\) 
that guarantees volume decrease, it may be sub-optimal. 
The choice of \(\lambda\) in \cref{eq:opt_lambda_ellipsoid} 
is inherited from the non-symmetric case in \cref{thm:non-sym-ellipsoid}. 
Although \cref{lem:lemlem} and \cref{thm:non-sym-ellipsoid} match
when \(\vu\) has only one non-zero entry,
we should expect better choices of \(\lambda\),
leading to more volume shrinkage,
to be possible in~\cref{lem:lemlem}. 
Although we have not found a choice of \(\lambda\)
that is dependent on \(\vu\) that generically improves upon \eqref{eq:opt_lambda_ellipsoid}, 
in practice we can solve for a better \(\lambda\) numerically, 
by directly minimizing the volume of the resulting ellipsoid,
\aligns{
  \min_{0 < \lambda < 1}
  \Vol(\setE(\lambda \va + (1-\lambda) \Diag(\vu\vu^\top)))
  =
  \min_{0 < \lambda < 1}
  -\sum_i \log(\lambda \va[i] + (1-\lambda)\vu[i]^2).
}
As the problem is one-dimensional, 
numerical solvers can often find near-optimal solutions. 
By warm-starting a numerical solver with the \(\lambda\) 
defined in \eqref{eq:opt_lambda_ellipsoid}, 
we can guarantee that the resulting ellipsoid 
leads to a smaller volume
and we do not lose our worst-case theoretical guarantees.

\textbf{Connection to the polar problem and related work.} Our results
have an interesting connection to some of the results from
\citet{GoemansHIM09a}, via the use of polarity theory. Here we give a
quick overview of their work and the connection to our cutting plane
methods. \citet{GoemansHIM09a} shows techniques to approximate some polyhedron
\(\setP \subseteq \R^d\) (a polymatroid being one of the main examples)
\emph{from inside} by some ellipsoid \(\setE(\mA)\). Their algorithm
maintains an ellipsoid \(\setE(\mA) \subseteq \setP\) and tries to
iteratively enlarge it. They assume access to an oracle such that, at
each iteration, either finds a point \(\vu \in \mP\) that is
sufficiently far from \(\setE(\mA)\), meaning \(\smash{\norm{\vu}_{\mA}
> \sqrt{d} + \epsilon}\) for some \(\epsilon > 0\), or guarantees that
\(\setE(\mA)\) ``approximates well'' \(\setP\) from inside in the sense
that \(\smash{\norm{\vu}_{\mA} \leq (\sqrt{n} + \epsilon)/\alpha}\) for
all \(\vu \in \setP\), where \(\alpha > 0\) is some approximation
factor. In their algorithm, when the oracle finds a point \(\vu \in
\setP\) such that \(\smash{\norm{\vu}_{\mA} > \sqrt{d} + \epsilon}\) the
algorithm needs to find an ellipsoid \(\setE(\mA^+)\) such that 
\begin{equation}
  \label{eq:polar_problem}
  \setE(\mA^+) \subseteq \mathrm{conv}(\setE(\mA) \cup \{\vu, -\vu\}),
\end{equation}
where \(\mathrm{conv}(\setD)\) is the convex hull of \(\setD\).
Interestingly, the polar problem is exactly what we need for out cutting
plane method. More precisely, the polar set \(\setX^*\) of a set \(\setX\) is given by~\(\setX^* \coloneqq \setst{z \in \R^d}{\lin{z, x} 
\leq 1}\). Then, by taking polars and using that \(\setE(\mA)^* = \setE(\mA^{-1})\), we have that \(\setP^* \subseteq \setE(\mA^{-1})\). Moreover, taking polar on both sides of~\eqref{eq:polar_problem} yields that an equivalent problem is finding \((\mA^+)^{-1}\) such that
\begin{equation*}
  \setE((\mA^+)^{-1}) \supseteq \setE(\mA^{-1}) \cap \{-\vu, \vu\}^*
  = \setE(\mA^{-1}) \cap \setst{\vz}{ |\lin{\vu, \vz}| \leq 1}.
\end{equation*}
That is, the problem is the one of finding a smaller ellipsoid
\(\setE((\mA^+)^{-1})\) that contains \(\setE(\mA^{-1}) \cap
\setst{\vz}{ |\lin{\vu, \vz}| \leq 1}\), which is broadly the goal of the
subroutine \(\CUT\).

\clearpage
\section{Experiments}
\label{apx:experimental-details}
\label{apx:exp-additional}

\begin{table}[t]
\centering
\caption{
  {\bf Datasets used in our experiments,} including number of samples $n$ and dimension $d$, 
  and order of magnitude of the 
  condition number of the regularized system ($\kappa(\mX^\top \mX + \nicefrac{1}{n} \mI)$)
  and condition number of the system 
  when using the optimal diagonal preconditioner, $\kappa_*$.
}

\small
\vspace{1em}

\begin{tabular}{lrrrllllllllllll}
\toprule
{\bf Dataset} & Repository/Source & $n$ & $d$ & $\kappa$ & $\kappa_*$ \\
\midrule
{\bf cpusmall} 
& LIBSVM, Delve ({\tt comp-activ})
& $8\,192$ & $12$ & $10^{13}$ & $10^2$ 
\\
{\bf california-housing} 
& Scikit/StatLib, \citet{kelley1997sparse}
& $20\,640$ & $8$ & $10^{10}$ & $10^4$ 
\\
{\bf power-plant} 
& UCI, \citet{tufekci2014prediction}
& $9\,568$ & $4$ & $10^{9}$ & $10^4$
\\
{\bf concrete} 
& UCI, \citet{yeh1998concrete}
& $1\,030$ & $8$ & $10^{9}$ & $10^3$ 
\\
{\bf mg} 
& LIBSVM, \citet{flake2002efficient}
& $1\,385$ & $6$ & $10^{3}$ & $10^3$ 
\\
{\bf breast-cancer} 
& UCI
& $569$ & $32$ & $10^{13}$ & $10^2$ 
\\
{\bf australian} & 
LIBSVM, Statlog
& $690$ & $14$ & $10^{9}$ & $10^2$ 
\\
{\bf heart} 
& LIBSVM, Statlog
& $270$ & $13$ & $10^{7}$ & $10^2$ 
\\
{\bf diabetes} 
& UCI
& $768$ & $8$ & $10^{6}$ & $10^2$ 
\\
{\bf ionosphere} 
& UCI
& $351$ & $34$ & $10^{3}$ & $10^2$ 
\\
{\bf news20} 
& LIBSVM, \citet{keerthi2005modified}
& $19\,996$ & $1\,355\,191$ & $10^{13}$ & NA 
\\
{\bf rcv1} 
& LIBSVM, \citet{lewis2004rcv1}
& $20\,242$ & $47\,236$ & $10^{13}$ & NA 
\\
\bottomrule
\end{tabular}
\label{tbl:datasets}
\end{table}

{\bf Objective functions}

We use $L_2$-regularized linear regression
$\Loss_{\text{\textsc{linear}}}$
and $L_2$-regularized logistic regression
$\Loss_{\text{\textsc{logistic}}}(\vw)$,
with a regularization coefficient of $1$. 
Given a data matrix $\mX \in \R^{n\times d}$, 
target $y \in \R^n$ for regression tasks 
and $y \in \{0,1\}^n$ for classification tasks,
and parameters $\vw \in \R^d$, 
\aligns{
  \Loss_{\text{\textsc{linear}}}(\vw)
  &= \frac{1}{n} \paren{
    \frac{1}{2} \norm{\mX \vw - \vy}^2 + \frac{1}{2}\norm{\vw}^2
  }.
  \\
  \Loss_{\text{\textsc{logistic}}}(\vw)
  &= \frac{1}{n}\sum_{i=1}^n 
  -\vy[i] \log(\sigma(\lin{\vx_i, \vw}))
  - (1-\vy[i]) \log(1-\sigma(\lin{\vx_i, \vw}))
  + \frac{1}{n}\frac{1}{2}\norm{\vw}^2.
}
where $\vx_i$ is the $i$th row of $\mX$
and $\sigma$ is the sigmoid function, 
$\sigma(z) = \nicefrac{1}{1+\exp(-z)}$.
For all datasets, we add a bias term 
by prepending a feature column of ones to $\mX$.

{\bf Datasets}

We use the datasets listed in \cref{tbl:datasets},
made available by LIBSVM \citep{chang2011libsvm},
Scikit-Learn \citep{pedregosa2011scikitlearn}
and the UCI repository \citep{dua2019uci}.

{\bf Data rescaling}

We do not rescale, standardize or otherwise change any of the datasets
beyond adding a bias term,
as our goal is to check whether preconditioned methods
can handle badly scaled data.

{\bf Initializations}

We consider two types of initializations. 
The first approximates a ``best-case'' scenario 
where we start from an estimate with a reasonable loss value 
despite the bad scaling of the data.
We set $\vw[i] = 0$ except for the bias term $\vw[0]$
which is set at the MLE of the non-regularized problem, 
\aligns{
  \vw[0] &= 
  \textstyle
  \bar y 
  &&
  \textstyle
  \text{ where } \bar y = \frac{1}{n} \sum_{i=1}^n \vy[i] 
  &&
  \text{ for linear regression},\\
  \vw[0] &= 
  \textstyle
  \log\paren{\frac{\bar y}{1-\bar y}}
  &&
  \textstyle
  \text{ where } \bar y = \frac{1}{n} \sum_{i=1}^n \vy[i] 
  &&
  \text{ for logistic regression.}
}
The results in the main text use this initialization. 
The second initialization takes $\vw \sim \Normal{0, \mI}$, 
giving a starting point with potentially large loss.
We give results using both initializations in the appendix. 

\clearpage
{\bf Optimizers used}
\begin{itemize}[leftmargin=1em]
  \item For the small linear regression problems, 
  we use preconditioned gradient descent 
  with the optimal preconditioner, 
  pre-computed using the semidefinite formulation 
  of \citet{qu2022optimal},
  solved using CVXPY \citep{diamond2016cvxpy}
  based on  the Matlab implementation of \citeauthor{qu2022optimal}.

  \url{https://github.com/Gwzwpxz/opt_dpcond}

  \item Gradient descent with a backtracking line-search 
  with backtracking parameter $\gamma = \nicefrac{1}{2}$.
  
  \item 
  RPROP \citep{riedmiller1993rprop}  
  following the implementation 
  and default hyperparameters 
  in PyTorch
  \citep{paszke2019pytorch}
  (starting step-size of $10^{-1}$, 
  increase step-size factor $\eta^+ = 1.2$, 
  decreasing step-size factor $\eta^- = 0.5$, 
  minimum step-size of $10^{-6}$ 
  and maximum step-size of $50$).

  \url{https://github.com/pytorch/pytorch/blob/v2.0.1/torch/optim/rprop.py}
  
  \item Hypergradient descent to set the step-size, 
  using (S)GD-HD 
  \citep[the multiplicative variant, ][]{baydin2018online}.
  The hypergradient step-size is set to the default
  $\beta = 0.02$ \citep[][footnote 3]{baydin2018online}.
  The initial step-size 
  is set to $\alpha_0 = 10^{-10}$,
  as otherwise most runs diverged immediately.

  \item 
  The diagonal Barzilai-Borwein method 
  of \citet{park2020variable},
  using their non-monotonic line-search.
  We use the default parameters suggested;
  a starting step-size of $10^{-6}$,
  regularization factor on the previous diagonal approximation 
  $\mu = 10^{-6}$, 
  a backtracking factor of $\nicefrac{1}{2}$
  for the backtracking line-search and
  a window of $15$ steps for the non-monotone line-search.
  This line-search does not use a forward step 
  as the update can increase the preconditioner.

  \item Preconditioned gradient descent 
  using the diagonal Hessian,
  with a backtracking line-search.

  \item AdaGrad \citep{duchi2011adagrad}
  but augmented with a backtracking line-search 
  as suggested by \citet{vaswani2020adaptive}
  to make it competitive in the deterministic setting,
  following the PyTorch \citep{paszke2019pytorch} implementation.

  \url{https://github.com/pytorch/pytorch/blob/v2.0.1/torch/optim/adagrad.py}

\end{itemize}

{\bf Line-search and forward steps}

For all methods, 
the backtracking line-search is augmented by a forward step.
When a step-size is accepted, 
it is increased by a factor of $1.1$ for the next step.
For multidimensional backtracking, 
we increase the set uniformly,
taking $\vb' = 1.1 \cdot \vb$ for the box
and $\smash{\va' = \va / \sqrt{1.1}}$ for the ellipsoid.
The ellipsoid uses a slightly smaller increase factor.\footnote{
To increase by a factor of $1.1$ in the one-dimensional case, 
the update to the ellipsoid should be $\smash{\va' = \va / 1.1^2}$.
}

{\bf Hyperparameters for the line-search and multidimensional backtracking}

For the backtracking line-searches used in gradient descent, preconditioned gradient descent 
and used to augment the other algorithms,
we start the search at an initial step-size of $10^{10}$
and backtrack by a factor of $\nicefrac{1}{2}$
when failing the Armijo condition, 
implemented generically as 
\aligns{
  \textstyle
  f(\vx - \vd) \leq f(\vx) - \frac{1}{2}\lin{\nabla f(\vx), \vd}
}
For multidimensional backtracking, 
we initialize the sets 
such that the first preconditioner is on the order of $10^{10} \mI$.
Using the notation of \cref{apx:pseudocode}, 
we use the scaling factor $c_0 = d \cdot 10^{10}$ for the box variant 
and $c_0 = \smash{\sqrt{d} \cdot 10^{10}}$ for the ellipsoid variant.
The first preconditioner tried by the box variant with backtracking factor $\gamma = \nicefrac{1}{2d}$
is then $\nicefrac{1}{2} \cdot  10^{10} \mI$, 
and the first preconditioner tried by the ellipsoid variant
(assuming the gradient is uniform, $\nabla f(\vx_0) \propto \vone$)
is $\nicefrac{1}{\sqrt{2}} \cdot 10^{10} \mI$.

\subsection{Additional results}
Figures \ref{fig:additional-plots-first}--\ref{fig:additional-plots-last}
give additional results 
on small linear and logistic regression problems 
and large logistic regression problems. 
Multidimensional backtracking has a consistent performance 
across problems and 
does not suffer from the extremely bad conditioning 
of cpusmall or california-housing (linear regression)
or australian, breast-cancer, diabetes and heart (logistic regression).

\clearpage
\begin{figure}[t]
\caption{Runs on small linear regression datasets with Bias initialization}
\label{fig:additional-plots-first}
\includegraphics{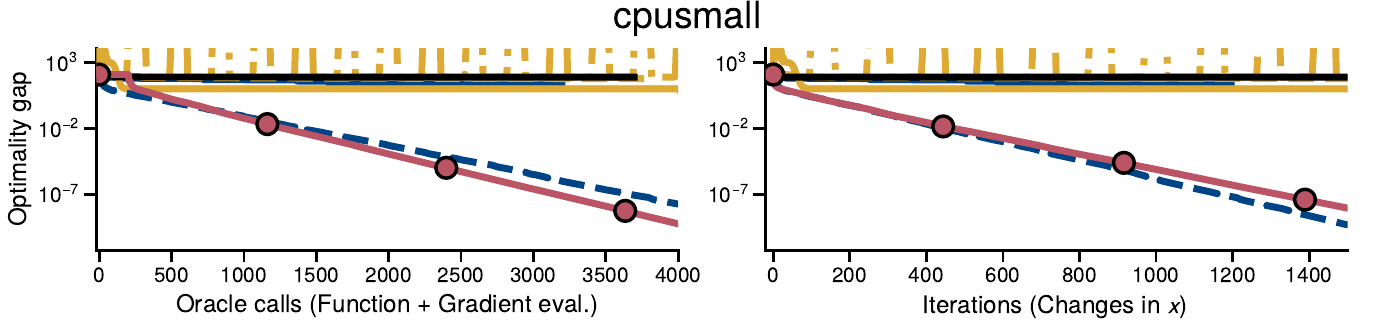}
\includegraphics{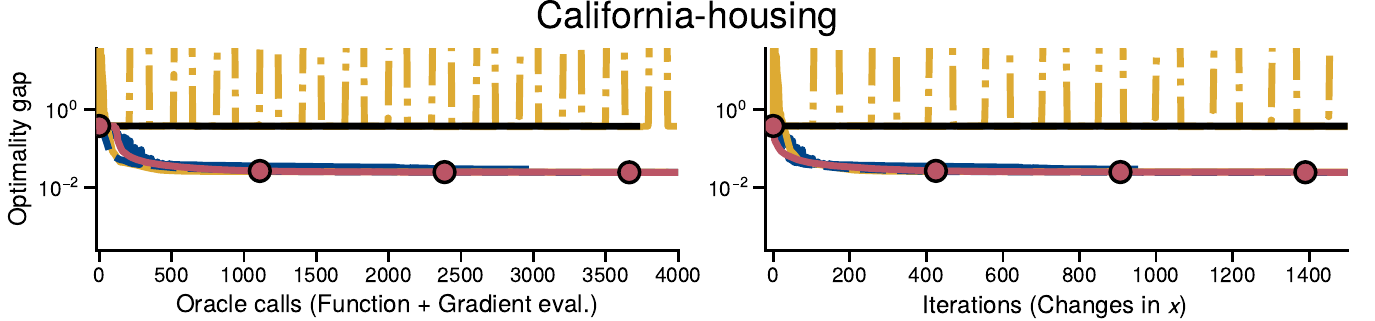}
\includegraphics{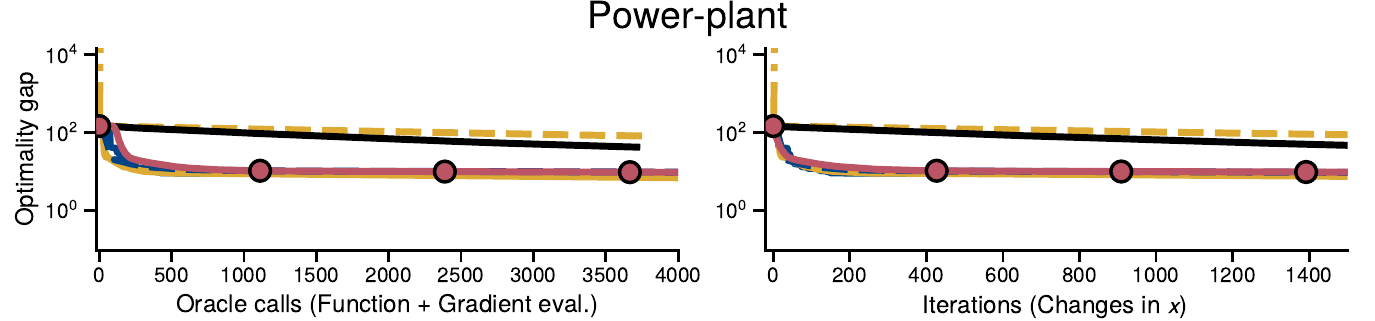}
\includegraphics{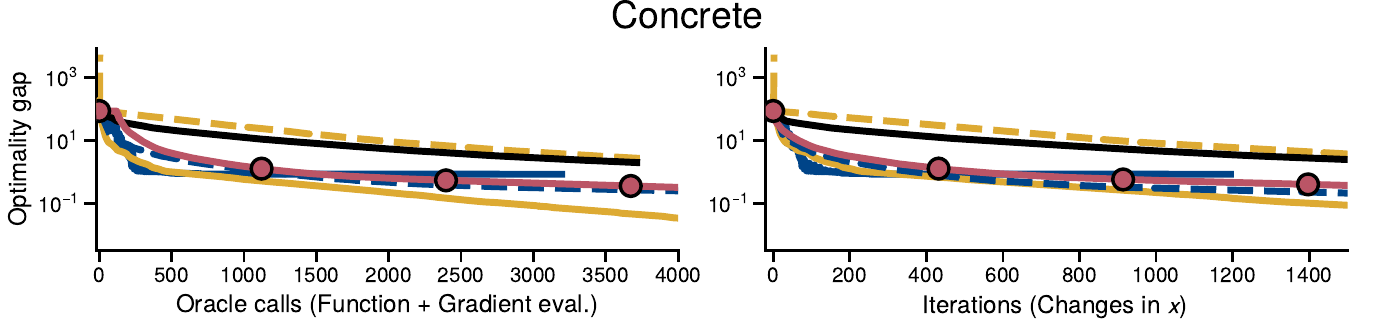}
\includegraphics{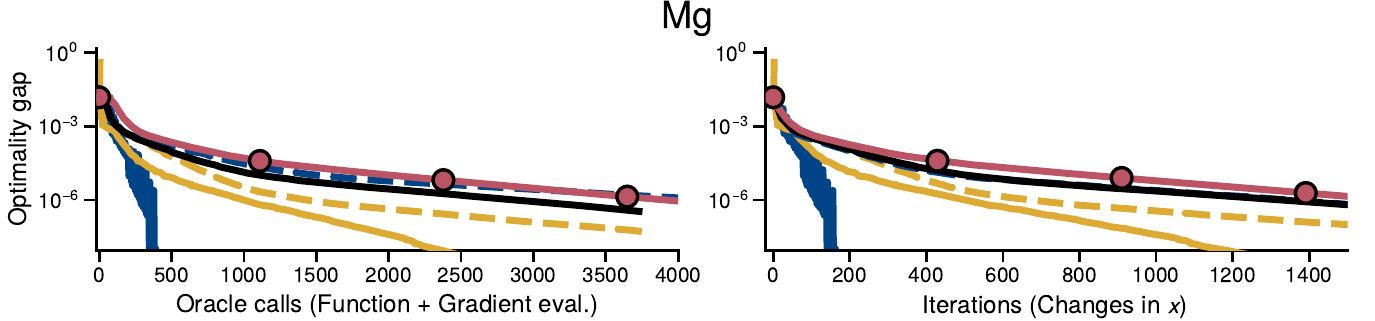}
\includegraphics{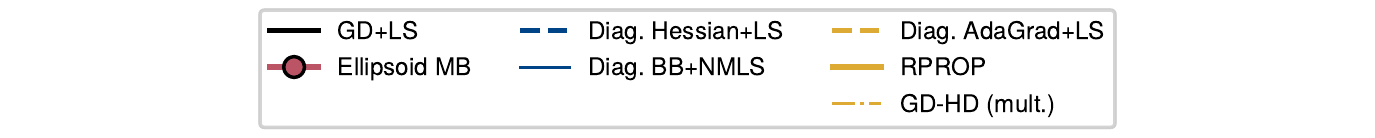}
\end{figure}

\begin{figure}[t]
\caption{Runs on small linear regression datasets with Gaussian initialization}
\includegraphics{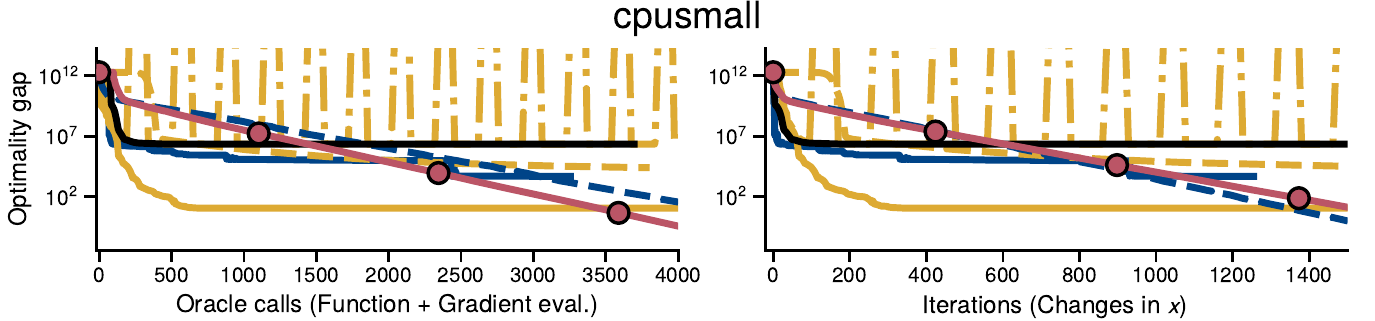}
\includegraphics{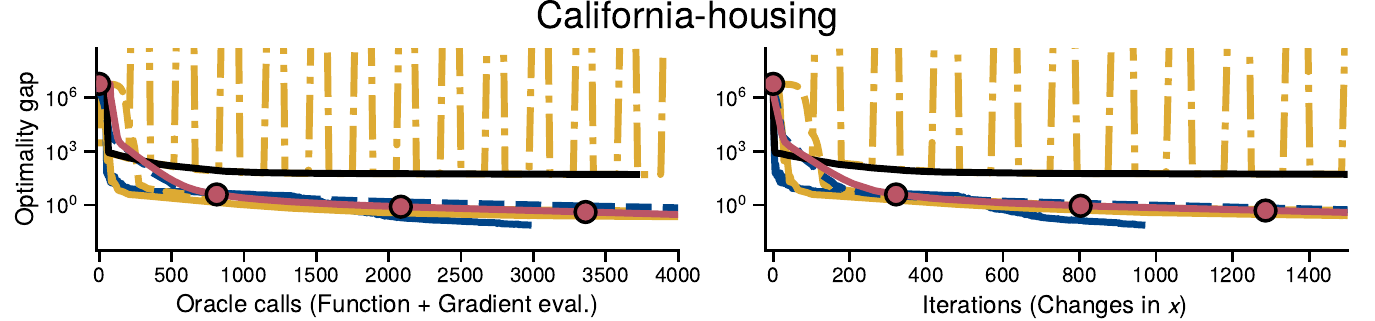}
\includegraphics{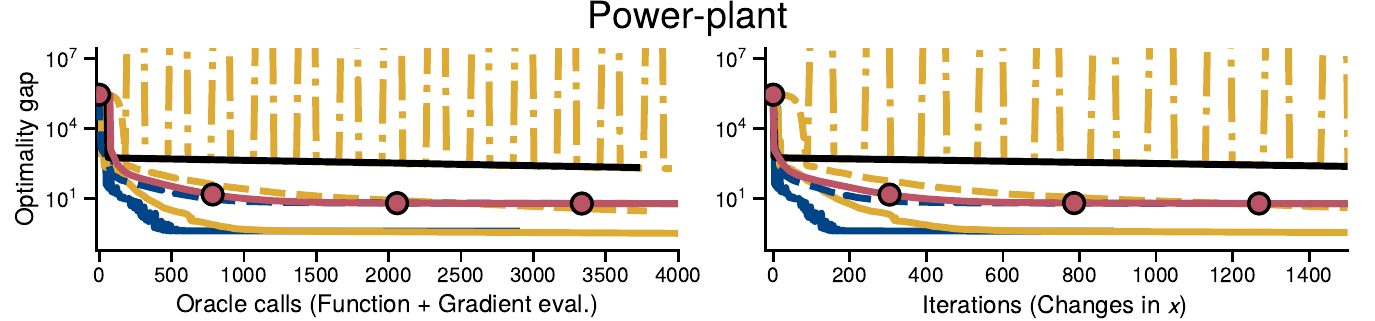}
\includegraphics{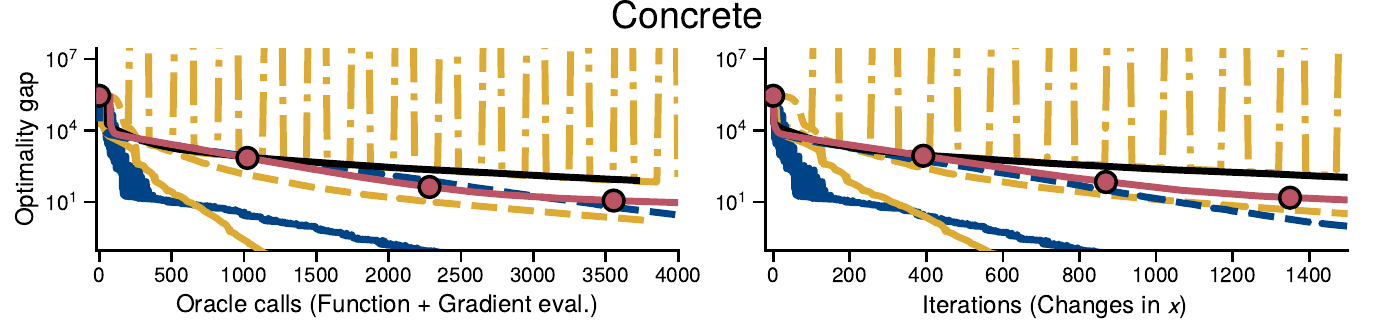}
\includegraphics{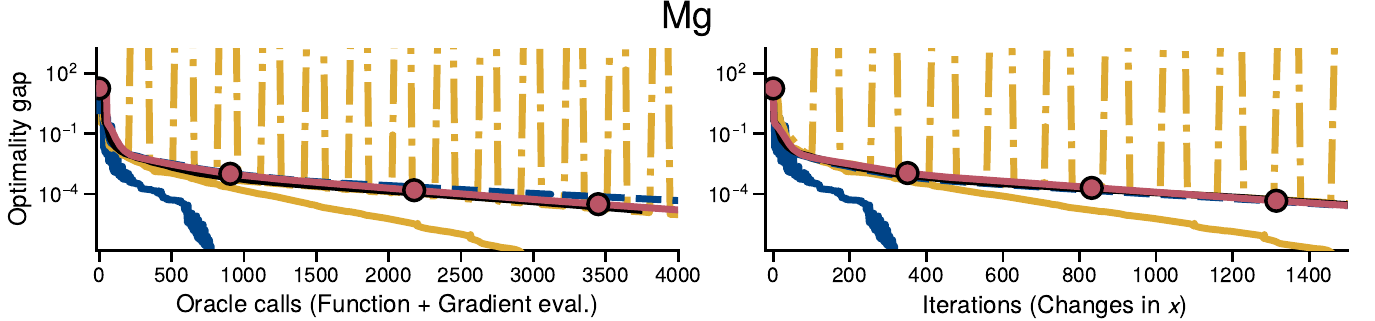}
\includegraphics{fig/legend.pdf}
\end{figure}

\begin{figure}[t]
\caption{Runs on small logistic regression datasets with Bias initialization}
\includegraphics{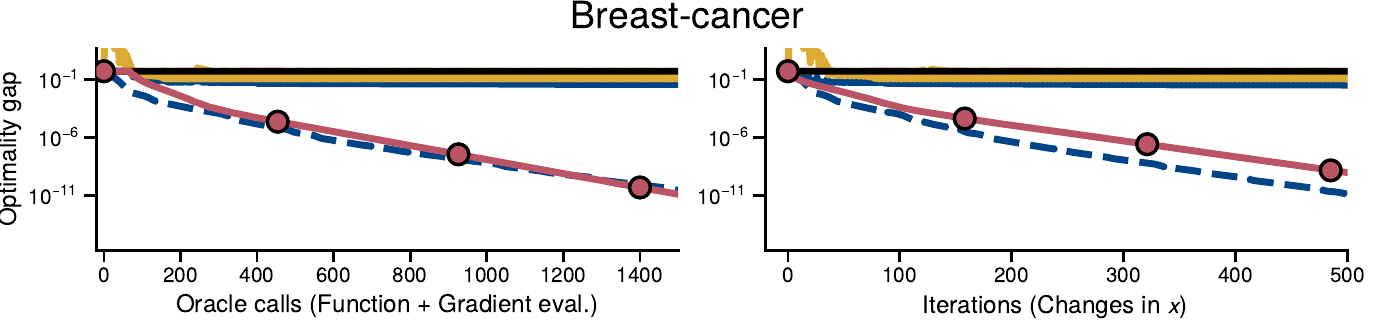}
\includegraphics{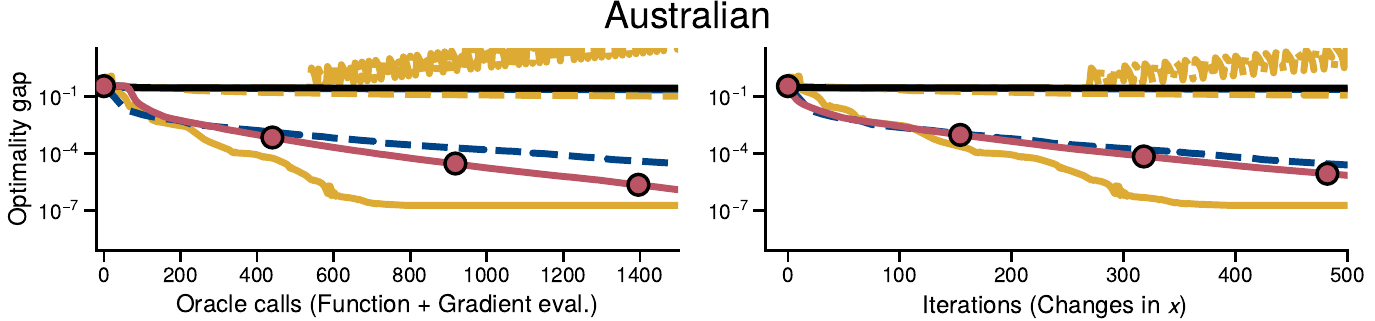}
\includegraphics{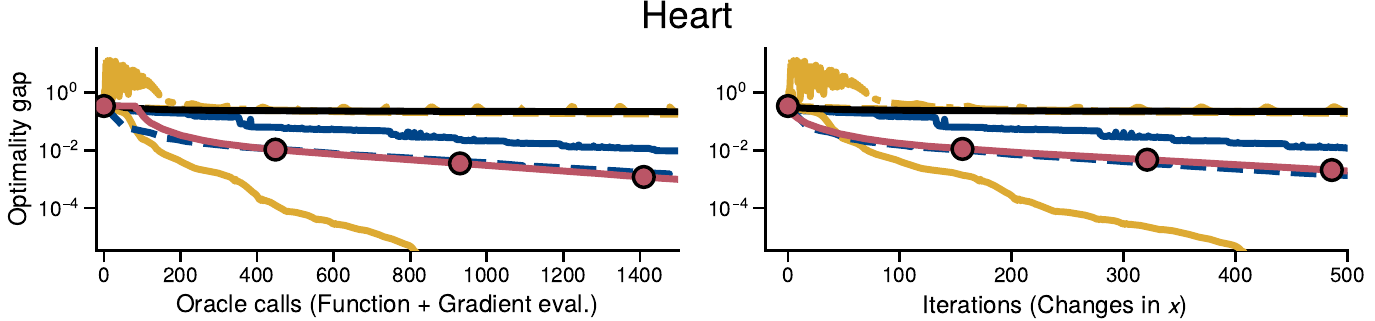}
\includegraphics{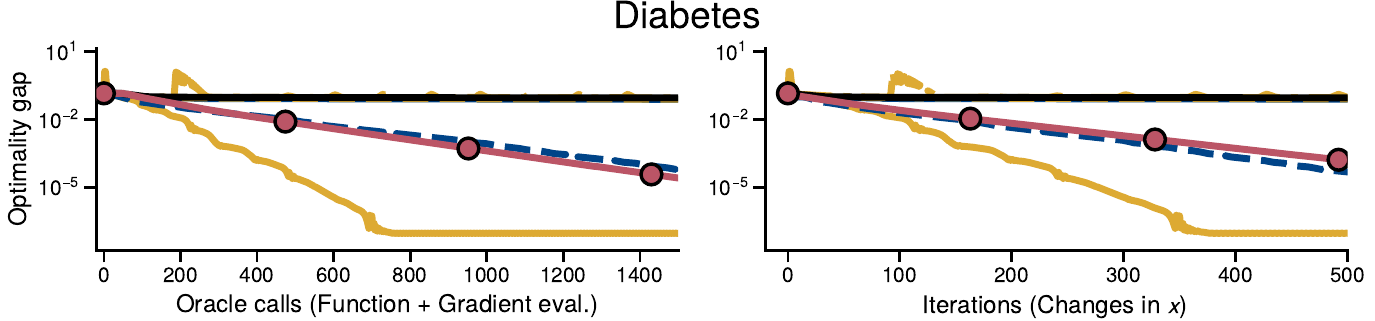}
\includegraphics{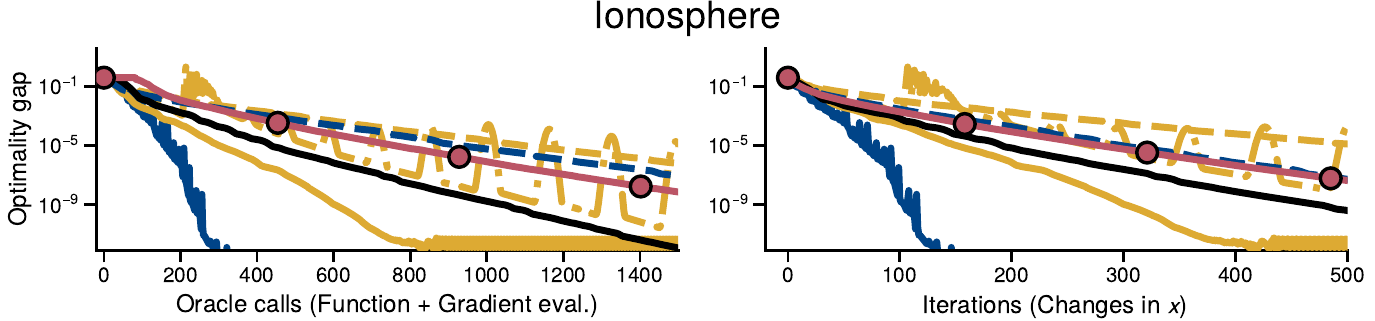}
\includegraphics{fig/legend.pdf}
\end{figure}

\begin{figure}[t]
\caption{Runs on small logistic regression datasets with Gaussian initialization}
\includegraphics{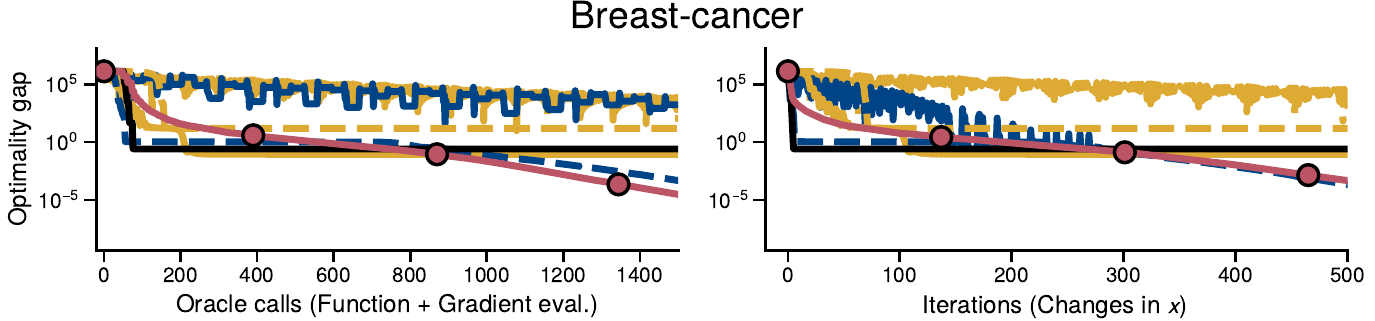}
\includegraphics{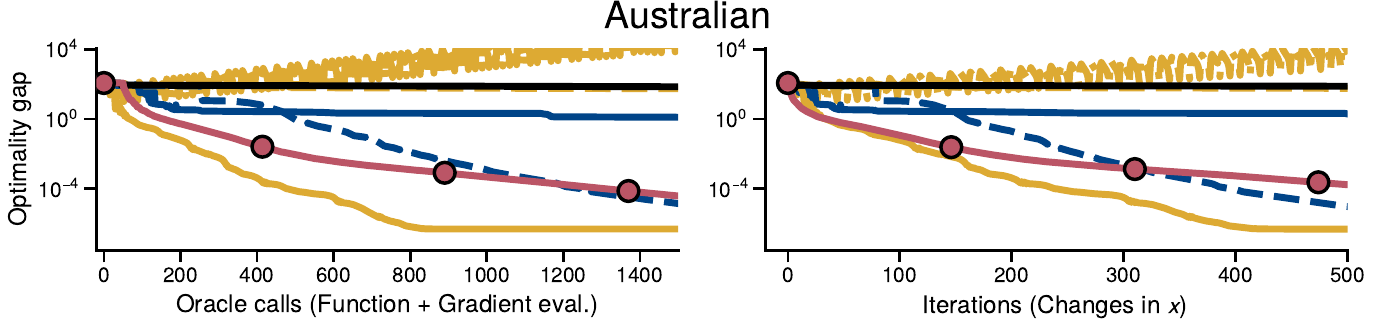}
\includegraphics{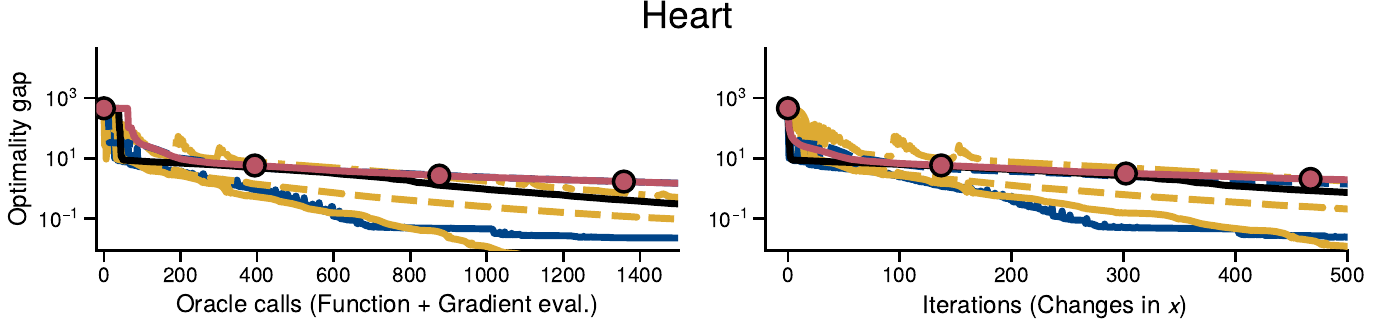}
\includegraphics{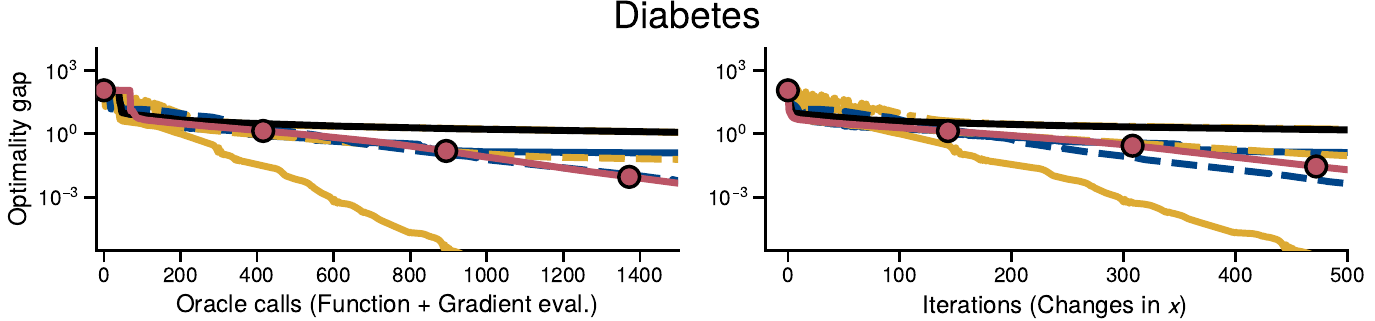}
\includegraphics{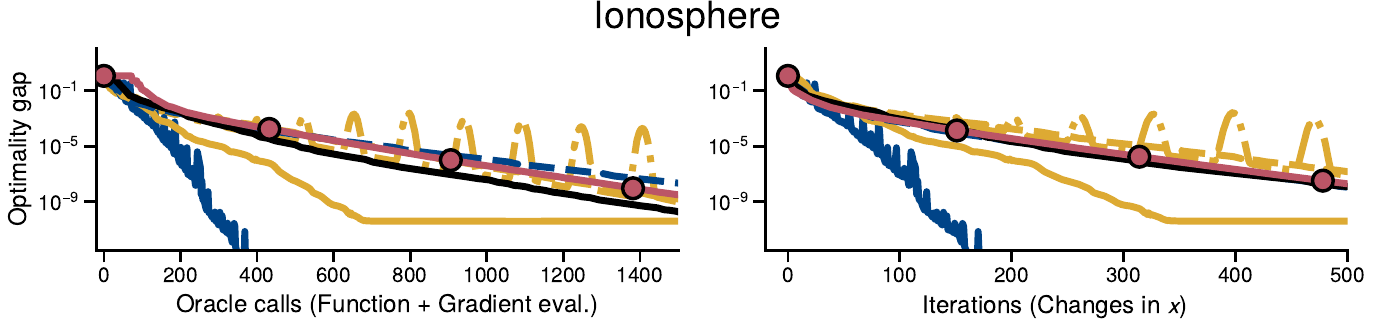}
\includegraphics{fig/legend.pdf}
\end{figure}

\begin{figure}[t]
\caption{Runs on large logistic regression datasets with Bias initialization}
\includegraphics{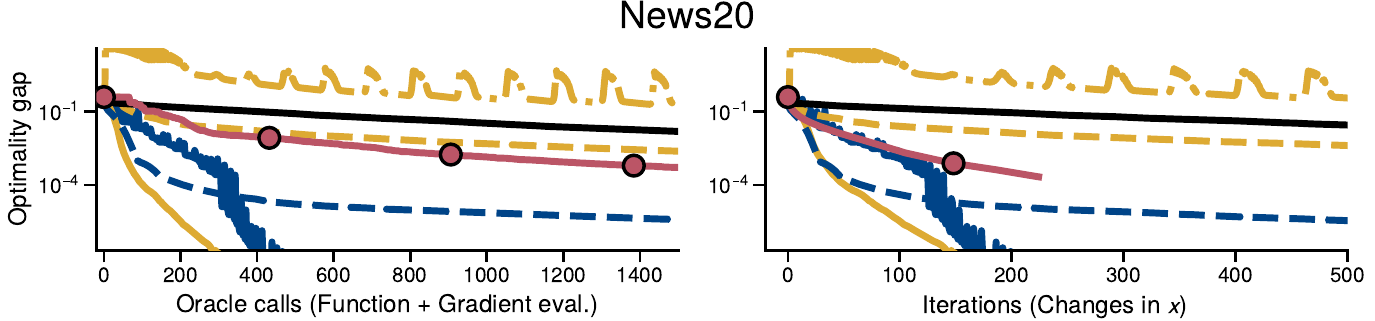}
\includegraphics{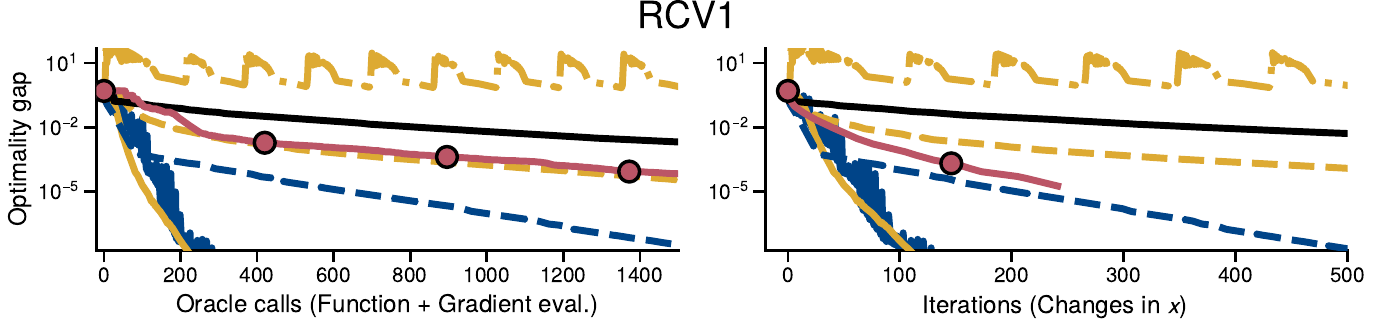}
\includegraphics{fig/legend.pdf}
\end{figure}

\begin{figure}[t]
\caption{Runs on large logistic regression datasets with Gaussian initialization}
\label{fig:additional-plots-last}
\includegraphics{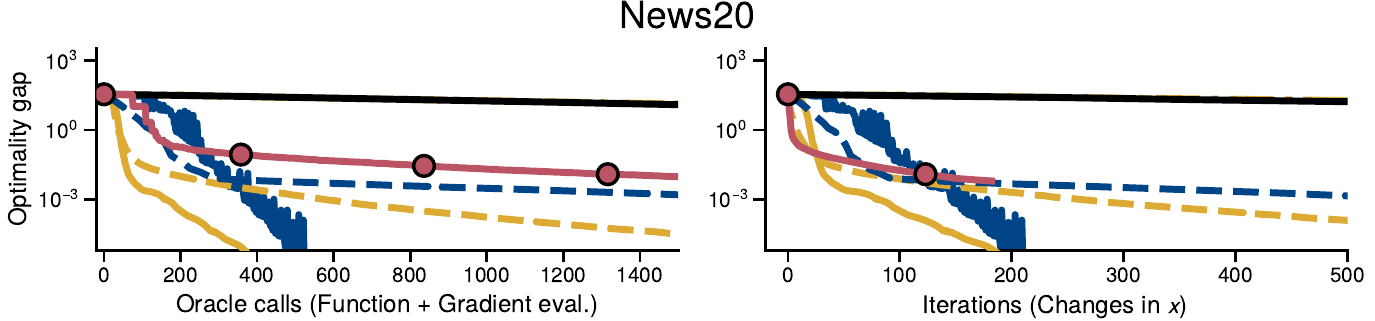}
\includegraphics{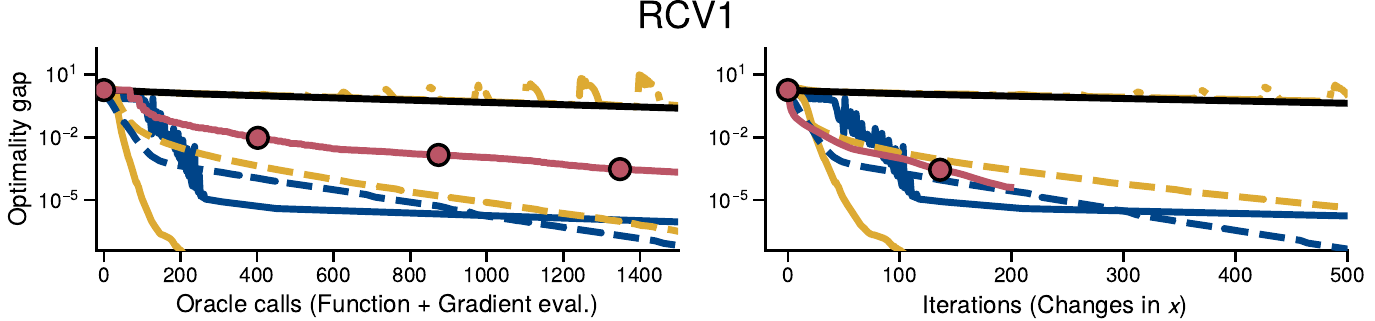}
\includegraphics{fig/legend.pdf}
\end{figure}

\end{document}